\newcommand{\Path}{}
\newcommand{\figs}{}
\DeclareMathOperator{\ForwInv}{Inv^{+}}
\DeclareMathOperator{\Cylinder}{Cyl}
\DeclareMathOperator{\Lan}{Lan}
\DeclareMathOperator{\Ran}{Ran}
\renewcommand{\REnv}[2]{ \mathrm{Lan}_{#1}{#2} }
\renewcommand{\LEnv}[2]{ \mathrm{Ran}_{#1}{#2} }
\title{Dynamics, data and reconstruction}
\begin{document}
	\author{ Suddhasattwa Das \footnotemark[1], Tomoharu Suda \footnotemark[2] \footnotemark[3]}
	\footnotetext[1]{Department of Mathematics and Statistics, Texas Tech University, USA}
	\footnotetext[2]{Department of Applied Mathematics, Tokyo University of Science, Japan}
	\footnotetext[3]{RIKEN Center for Sustainable Resource Science, RIKEN, Japan}
	\date{\today}
	\maketitle
	\begin{abstract}
		The goal of data-driven learning of dynamical systems is to interpret time series as a continuous observation of an underlying dynamical system. This task is not well-posed for a variety of reasons - such as multiple co-existing sub-systems,  topologically inter-weaving of these sub-systems; and more importantly, the non-injectivity of the correspondence between dynamical systems and time series. We show how these ambiguities are circumvented if one considers dynamical systems and measurement maps collectively. Dynamical systems, observed dynamical systems, and time series data - each of these three collections have an extensive network of relations within them, which gives them the mathematical structure of a category. One of the new concepts proposed is a rigorous definition of time series data as a chain of measurement sequences with decreasing information content. This definition subsumes the familiar notions of sequences, time series and even subshifts. Using these notions it is shown that the entire process of converting an observed dynamical systems into a time series object is functorial, and passes through a number of phases each bearing its own categorical structure. This discovery sheds new light on the nature of reconstruction algorithms. Under mild conditions of consistency, reconstruction itself is shown to be functorial operation. This provides a new category theoretic perspective on the nature and limits of reconstruction.
	\end{abstract}
	
	\begin{keywords} Semi-conjugacy, reconstruction, semi-group, functor, Kan-extension, adjoints, time series \end{keywords}
	
	\begin{AMS}	18D25, 18A40, 37M99, 18F60, 37M22, 18A32, 18A25, 37M10 \end{AMS}
	\section{Introduction} \label{sec:intro}
	
	The mathematical notion of a dynamical system - a space $\Omega$ along with group of transformations $\Phi^t:\Omega \to \Omega$ is present in most natural, mechanical and human-driven systems, such as the climate, ecological models, controls systems, and the stock market. The points in the space $\Omega$ represent the states the system could be in. The index $t$ represents time and can be either a continuous or discrete variable. For each time $t$, $\Phi^t$ represents a transformation or rearrangement of the states. This leads to different kinds of phenomena - statistical, topological and geometric. 
	
	However the pair $(\Omega, \Phi^t)$ itself may not be visible. For example in climate, epidemiological phenomena, the phase space $\Omega$ is not known or too abstract. Sometimes $\Omega$ may be known but the dynamical law $\Phi^t$ is not apparent. The dynamical system is usually indirectly observed via an observation or measurement map $\phi:\Omega \to \real^d$. Then for any initial point $\omega$ of $\Omega$, and an orbit $\SetDef{ \Phi^t \omega }{ 0\leq t \leq T }$ up to time $T$, one obtains a time series $\SetDef{ \phi \paran{ \Phi^t \omega } }{ 0\leq t \leq T }$ of $d$-dimensional data points. The task of \emph{data-driven discovery} of dynamical systems is to attempt a reconstruction of the underlying dynamical system solely from such time series \cite[e.g.]{BerryDas2023learning, BerryDas2024review}.
	
	Our main result is based on a mathematical formulation of the above notions. We state later in Theorem \ref{thm_consistent:3} that an observation-based inner approximation is also observation-exact. This statement requires a precise understanding of the notions of ``observation", "reconstruction", "data-driven", "inner-approximation" and "exactness". These will be gradually explained throughout the paper. The approach to be presented in this paper works not only for space valued dynamics and observations, where the notion of quantitative approximation can be used, but also for a fairly broad types of settings such as measurable spaces or pre-ordered sets, which would be beyond the scope of most reconstruction algorithms. For example, we can observe a measurable dynamics via a measurable partition of the phase space. This will result in a discrete valued data not immediately amenable to quantitative approximations such as least squares methods. However, our result indicates how to approximate dynamics from such qualitative data and what guess will be the most reasonable.
	
	\begin{figure} [!t]
		\centering
		\begin{tikzpicture}[scale=0.5, transform shape, framed, background rectangle/.style={double, ultra thick, draw=gray, rounded corners}]
			\node [text width=1.0\columnA, text centered, minimum height=0.2\rowA, shape=rectangle, draw=ChhaiB] (n1) at (0\columnA, 0\rowA) { \textbf{1.} Choose a network with countable number of nodes, each node representing an \textit{observed dynamical system}. Nodes are graded to indicate levels of complexity. };
			\node [text width=1.0\columnA, text centered, minimum height=0.2\rowA, shape=rectangle, draw=ChhaiB] (n2) at (1.3\columnA, 0\rowA) { \textbf{2.} For any data set $X$ and complexity level $L$ use the network to perform an inner or outer approximation for $X$.};
			\node [text width=1.0\columnA, text centered, minimum height=0.2\rowA, shape=rectangle, draw=ChhaiB] (n3) at (2.6\columnA, 0\rowA) { \textbf{3.} Devise an algorithm that implements this approximation scheme under various constraints such as size of hypothesis space and precision of regression algorithms. };
			\draw[-to] (n1) to (n2);
			\draw[-to] (n2) to (n3);
		\end{tikzpicture}
		\caption{General scheme for devising algorithms for functorial reconstruction of dynamics. }
		\label{fig:algo_scheme}
	\end{figure}
	
	The task of reconstruction involves the following different mathematical notions - dynamical systems, measurement, orbits, time series, data and reconstructed model itself. All these terms are related but different. To assess how effective a reconstruction algorithm is, it is important to separate the notions of a dynamical system, a dataset, and an algorithm. The separation of these concepts lead to the following questions : 
	
	\begin{enumerate} 
		\item An algorithm takes as input a dataset and outputs a dynamical system. The output is created regardless of whether the dataset arises from a dynamical system. In the general case, how is the output related to the dataset?
		\item An algorithm should be consistent. This means that with more information, i.e., a larger dataset, the estimation should get better. What should be the mathematical definition of consistency ? 
		\item Different dynamical systems may yield the same time series, depending on the measurement. How does this non-uniqueness affect the convergence of reconstructed dynamical systems ? 
	\end{enumerate}
	
	Question 1 (\textit{abbrev.} Q1) addresses the difference between a time series generated by a true orbit, and the general notion of dataset. As has been discovered in recent works, the outcome of numerical methods is strongly dictated by an aspect of dynamical systems called its \emph{spectrum} or \emph{spectral measure} \cite[e.g.]{DGJ_compactV_2018, DasGiannakis_delay_2019, DasGiannakis_RKHS_2018, DasJim2017_SuperC}. Most numerical procedures involve some form of averaging, whose limiting value reveals only selected aspects of the dynamics. This reveals one of the many complications associated to Q1. 
	
	Q2 addresses the issue of consistency which is relevant in most branches of approximation theory. One would expect that if a sequence of datasets has increasing information content, an algorithm should be able to utilize that to provide progressively better approximations. Algorithms have been defined in the context of computability and symbolic dynamics \cite[e.g.]{BDWY2020, HertlingSpandl_gap_2008} but a general definition is hard to provide. Our answer to Q2 is via a category called \emph{time series data}, an approach quite different from other characterizations of information flow \cite[e.g.]{jannink1998ontio, kokar2001data}. 
	
	Dynamical systems tend to have a \emph{nested structure} of sub-systems embedded within sub-systems \cite{MarcusLind1995}. One often relies on the assumption of \emph{ergodicity} to set a definite limit for empirical measures and time series. However, even ergodic dynamical systems which are chaotic are known to have a dense collection of periodic orbits \cite{ KatokStepin1966approx, Katok1980periodic}. Thus these periodic orbits mimic the invariant set and measure of the true dynamics. Moreover, they have been shown to mimic many other ergodic and asymptotic aspects of the dynamics, such as entropy, pressure and dimensions \cite{KatokStepin1967, WangSun2010, LiaoEtAl2018, Sanchez_pressure_2017}. However, periodic orbits have  properties different from the general dynamics - their Lyapunov exponents are zero, and they are non-mixing. These results indicate the difficulty of addressing the issues raised in Q3.
	
	We present four new ideas and perspectives in the search for answers to these questions. The first is that the entire process of data generation from measurements of a dynamical system is a \textit{functorial} process, presented in the diagram below :
	\begin{equation} \label{eqn:outline:1}
		\begin{tikzcd} [scale cd = 0.9 ]
			\begin{array}{c} \DSM \\ \mbox{Dynamics} \\ + \mbox{Measurement}  \end{array}
\arrow[d, "\Forget"']
&& \begin{array}{c} \DSMO \\ \mbox{Dynamics} \\ + \mbox{Measurement} \\+ \mbox{Orbits} \end{array}
\arrow[ll, "\Forget"'] \arrow[rr, "\Discretize_{\Delta t}"] \arrow[lld, dashed, bend right=10, "\proj"] \arrow[dd, dashed, "\bar{\Recon}"' ] \arrow[ddrr, dashed, "\Data_{\Delta t}" ]
&& \begin{array}{c} \DSMO_{\num} \\ \mbox{Discrete-time} \\ \mbox{dynamics} \\ + \mbox{Measurement} \\+ \mbox{Orbits} \end{array}
\arrow[d, "\Obs"] \\
\begin{array}{c} \DS \\ \mbox{Dynamical} \\ \mbox{systems} \\ \Functor{\Time}{\Context} \end{array}
&& && \begin{array}{c} \mbox{Subshifts} \\ \Seq \end{array} 
\arrow[d, "\word"] \\
&& \begin{array}{c} \DS \\ \mbox{Dynamical} \\ \mbox{systems} \\ \Functor{\Time}{\Context} \end{array}
&& \begin{array}{c} \TSD \\ \mbox{Timeseries} \\ \mbox{data} \end{array} 
\arrow[ll, "\Recon"]
		\end{tikzcd}
	\end{equation}
	The figure above helps pose Questions 1--3 above in a precise mathematical setting, by presenting the various entities discussed. The precise role and mathematical nature of reconstruction may be understood only if the process of generation of data is modeled correctly. Diagram \ref{eqn:outline:1} presents data generation via a correspondence $\Data$ between two collections labeled $\DSMO$ and $\TSD$ respectively. Each object in $\DSMO$ is a combination of a dynamical system, a measurement, and orbits in the phase space. Each object in $\TSD$ is a collections of chunks of data with increasing information content. The correspondence $\Data$ brings to light that the source of data is not a dynamical system alone, but a more complicated $\DSMO$ object. Thus two different dynamical systems may map into different $\DSMO$ objects, which generate the exact same time series via the transformation labeled as $\Data$. Although dynamics is our primary object of interest, it only forms one of the components of $\DSMO$.
	
	The correspondence $\Data$ is not just a mapping between objects. We will see later in Section \ref{sec:obs} that one can define \emph{morphisms} or relationships between the compound objects in $\DSMO$, which represent a combined conjugacy between dynamics, orbits and observations. These relations are compositional, and lead to a structure called a \emph{category}. Further, we shall see that a morphism $\phi :X \to X'$ between $\DSMO$-objects is reflected as a hierarchy of information content in their generated time series in $\TSD$. Thus $\Data$ is a correspondence that also preserves the structure present in these categories, making it a \emph{functor}.
	In fact each node in Diagram \ref{eqn:outline:1} is a category of its own, and each arrow a functor. So Diagram \ref{eqn:outline:1} is not just a flowchart but the layout of how during the process of measuring and reconstructing a dynamical system, various structure preserving transformations take place. The functorial property of the maps in the diagram are not imposed or assumed, they are derived as consequences. We formulate these notions for a fairly general class of categories.
	
	Our second main idea is a mathematical notion of a \textit{time series data} object $\TSD$, made precise using category theoretic language in Section \ref{sec:tsd}. A $\TSD$ object is a concept independent of the concepts of dynamical system and number sequences. It models a sequence of growing information content that is fed to an algorithm. This is done with a generality offered by our Assumptions \ref{A:concrete} and \ref{A:obs} which allows $\TSD$ objects to take the form of not just numerical sequences but also preorders and partitions.
	
	The third important point we make in Sections \ref{sec:recon} and \ref{sec:consistency} is that under mild assumptions, the limiting or idealized behavior of reconstruction algorithms are themselves functorial. So while an algorithm may not be structure preserving, its underlying scheme, representing its limiting behavior, is functorial. The arrow $\Recon$ in Diagram \ref{eqn:outline:1} represents a general data-driven reconstruction scheme. The confluence of two functors -- $\Data$ and $\Recon$ provides a host of category theoretic tools to analyze them.
	
	Our fourth and final point is the perspective that the design of reconstruction algorithms and judgment of their efficacy should not be based on individual cases, as argued above in Q3. While the results of an algorithm may be compared and evaluated for individual cases, the correctness of an algorithmic \textit{scheme} is hard to be defined. We use our category theoretic framework in Section \ref{sec:consistency} to promote the idea that a reconstruction scheme should ultimately be an inner or outer approximation. An inner or outer approximation requires a collection of reference objects along with a hierarchical structure. This is presented as the second stage of Figure \ref{fig:algo_scheme}. This reference structure and approximation strategy are made rigorous in Definition \ref{def:ceil_floor} and Theorem \ref{thm:comput}. Note that the choice of reference objects also a part of the design and presented as the first stage. The actual algorithm design is an implementation, often imprecise, of this approximation scheme. This is presented as the third and final stage of Figure \ref{fig:algo_scheme}.
	
	Thus, although any reconstruction algorithm may be built without regard for functoriality, their efficacy or correction can only be assessed in a categorical framework. This requires placing the algorithm in a proper context, as suggested in Figure \ref{fig:algo_scheme}.
	
	This work does not prescribe any numerical recipe or technique, but aims to serve as a general framework to describe and analyze such methods in general settings. For example, our analysis indicates that there is no universal data-driven reconstruction technique that provides an exact reconstruction. This is expressed as Conjecture \ref{conj:oj9l} in Section \ref{sec:consistency}. Regardless of which computational tools such as neural networks,  kernel-methods, or DMD are used, algorithms should aim to capture the collective categorical structure present within dynamical systems.

	\paragraph{Outline} We begin with a brief review of category theory in Section \ref{sec:cat}. We next realize the confluence of dynamics, measurements and orbits as a category, using the language of comma categories in Section \ref{sec:obs}. Next Section \ref{sec:orbits} presents the combined notion of dynamical systems being observed along orbits as a mathematical object. Section \ref{sec:cncrt} presents some consequences of our assumptions made in Section \ref{sec:cat}. This results in the creation of a category theoretic notion of subshifts in Section \ref{sec:shift}.  Section \ref{sec:msrmnt} next explains how subshifts arise naturally from measurements of dynamical systems. Next Section \ref{sec:tsd} introduces time series data as a rigorous mathematical object amenable for categorical analysis. Section \ref{sec:recon} discusses the functorial nature of reconstruction, and Section \ref{sec:consistency} discusses various properties of reconstruction as a functor. Some technical lemmas and proofs have been moved to the Appendix.
	
	\section{A categorical perspective for dynamics} \label{sec:cat}
	
	The usual approach to dynamical systems is analytical -- one assumes some properties on a space and a dynamics law, and studies the consequences of those properties. The structure provides the context for the study. In a set-theoretic context one looks for features such as invariant sets, fixed points, and periodic orbits. In a topological context one looks for features such as attractors and stable and unstable manifolds. In a measure theoretic context, features such as invariant measures and ergodic measures. A categorical viewpoint does not make separate considerations for these contexts. All the definitions assume a generic context and only takes into account the network of relations that exist within the context. Spaces and maps lose all of their internal content and exist as symbolic points as arrows. The objective is to re-establish all the various properties as consequences of these network of relations. We begin with a brief introduction to the notion of categories and functors. 
	
	\paragraph{Category} A category $\calC$ is a collection of two kinds of entities : 
	\begin{enumerate} [(i)]
		\item objects : usually representing different instances of the same mathematical construct; We will often write $x \in \calC$ to mean that $x$ is an object of $\calC$.
		\item morphism : connecting arrows between a pair of objects; satisfying the following three properties --
		\item compositionality : given any three objects $a,b,c$ of $\calC$ and two morphisms $a \xrightarrow{f} b$ and $b \xrightarrow{g} c$, the morphisms can be joined end-to-end to create a composite morphism represented as $a \xrightarrow{g \circ f} b$;
		\item associativity : the composition of morphisms is associative;
		\item identity morphism : each object $a$ is endowed with a morphism $\Id_x$ called the \emph{identity} morphism, which play the role of unit element in composition.
	\end{enumerate}
	
	Given two objects $x,y$ in $\calC$, the collection of arrows from $x$ to $y$ is denoted as $\Hom(x;y)$. Note that this collection may be infinite, finite or even empty. The last criterion implies that for each $x$ $\Hom(x;x)$ has at least one member. Whenever there are multiple categories being discussed, one uses the notations $\Hom_{\calC}(x;y)$ or $\calC(x;y)$ to indicate that the morphisms are within the category $\calC$.
	
	\paragraph{Examples} One of the most fundamental categories is $\SetCat$, the category in which objects are sets up to a certain prefixed cardinality, and arrows are arbitrary maps. Similarly $\Topo$ denotes the category of topological spaces, with continuous maps as arrows. We denote by $\VectCat$ the category in which the objects are vector spaces and arrows are linear maps. The collection $\AffineCat$ that we have already defined has the same objects as $\VectCat$ but all affine maps as morphisms. Note that this includes the morphisms in $\VectCat$. This makes $\VectCat$ a \emph{subcategory} of $\AffineCat$. Suppose $\calU$ is any set. Then the power-set $2^{\calU}$ of subsets of $\calU$ is a category, in which the relations are the subset $\subseteq$ relations. Note that there can be only at most one arrow between any two objects $A,B$ of this category, which is to be interpreted as inclusion. Such categories are known as \emph{preorders}, and other examples are the category of ordered natural numbers, real numbers, open covers, and the concept of infinitesimal \cite[see]{Das2023CatEntropy}. 
	
	A particularly important example of categorical structure can be found in semigroups. Any semigroup $\calG$ can be interpreted as a 1-point category, in which each arrow corresponds bijectively to elements of $\calG$. The associativity of semigroup operation becomes the associativity of arrow composition. Figure \ref{fig:semigroups} presents some 1-point categorical representations of the semigroups $\SqBrack{\num_0, +}$, $\SqBrack{\integer, +}$ and $\SqBrack{\real, +}$. See Figure \ref{fig:semigroups} for some illustrations. We next discuss how different categories can be transformed into one another.
	
	\begin{figure}\center
		\includegraphics[width=0.26\textwidth]{\figs 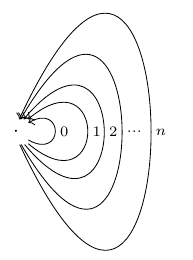}
		\includegraphics[width=0.47\textwidth]{\figs 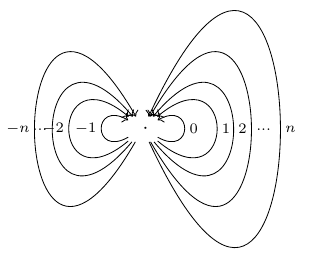}
		\includegraphics[width=0.24\textwidth]{\figs 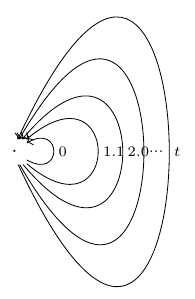}
		\caption{Representing semi-groups as 1-object categories -- $\SqBrack{ \num_0, +}$, $\SqBrack{ \integer, +}$ and $\SqBrack{ \real, +}$.} 
		\label{fig:semigroups}
	\end{figure} 
	
	\paragraph{Functors} Given two categories $\calC, \calD$, a functor $F:\calC\to \calD$ is a mapping between their objects along with the following properties : 
	\begin{enumerate} [(i)]
		\item For each $x,y\in ob(\calC)$, $F$ induces a map $F_{x,y} : \Hom_{\calC}(x;y) \to \Hom_{\calD}( Fx; Fy)$. Thus arrows / morphisms between any pair of points get mapped into morphisms between the corresponding pair of points in the image.
		\item $F$ preserves compositionality : given any three objects $a,b,c$ of $\calC$ and two morphisms $a \xrightarrow{f} b$ and $b \xrightarrow{g} c$, $F(g\circ f) = F(g) \circ F(f)$.
		\item $F$ preserves identity : $F(\Id_a) = \Id_{F(a)}$.
	\end{enumerate}
	
	Functors are thus maps that also preserve categorical structure, mainly by preserving compositionality. One of the simplest notions of functors are monotonic functions from $\real$ to $\real$, and may be interpreted as functors from $\Rplus$, which is the category defined by $\real$ with the preorder $\leq$, to $\Rplus$ or $\Rminus$, which is the category defined by $\real$ with $\geq$. Realizing mathematical transformations as functors leads to deeper insights in the associated field. Some examples are Lebesgue integration \cite{Leinster_integration_2020}, diameter and Lebesgue number \cite{Das2023CatEntropy}, and topological closure \cite{ClementinoGiuliTholen1996}.
	
	One can interpret a functor $F : \calC \to \calD$ as drawing of the $\calC$ within $\calD$. The objects and arrows in $\calC$ are overlaid on objects and arrows in $\calD$. For that reason a functor is also called a \emph{diagram} or $\calC$-diagram if its domain is $\calC$. There is a structure preserving manner in which $\calC$-diagrams may be transformed. This leads us to the next topic : 
	
	\paragraph{Natural transformations} Let $\calC, \calD$ be two categories, and $F, F' : \calC \to \calD$ be two functors. A natural transformation $\alpha$ from $F$ to $F'$, denoted as $\alpha : F \Rightarrow F'$ is a family of arrows $\SetDef{ \alpha_c : F(c) \to F'(c) }{ c\in ob(\calC) }$ such that the following commutations hold
	\begin{equation} \label{eqn:def:nat_transform}
		\forall \begin{tikzcd} c \arrow[d, "f"] \\ c' \end{tikzcd} , \quad 
		\begin{tikzcd}
			F(c) \arrow[r, "\alpha_{c}"] \arrow[d, "F(f)"'] & F'(c) \arrow[d, "F'(f)"] \\
			F(c') \arrow[r, "\alpha_{c'}"'] & F'(c')                        
		\end{tikzcd}
	\end{equation}
	Just like arrows / morphisms connect objects, natural transformations connect functors. In fact the collection $\Functor{\calC}{\calD}$ of functors from $\calC$ to $\calD$ is itself a category, in which the morphisms are natural transformations. This realization that the collection of functors itself is a category bears a lot of significance in the study of homology, simplicial complexes and sheaf theory in general. We will find a special significance in the study of dynamical systems.
	
	\paragraph{Category of dynamical systems} Any dynamical system has a concept of time associated to it. The role of time can be played by any semigroup, such as $(\num_0, +)$, $(\num, +)$, $(\real_0, +)$ and $(\real, +)$. Such a time semigroup shall be denoted by $\Time$. We shall interpret it as a 1-point category. Next let us fix a category $\Context$ and call it the \emph{context} category. It is usually taken to be $\Topo$, $\VectCat$ or $\MeasCat$, the category of measurable spaces and measurable maps. The category $\Context$ represents the nature of spaces and maps that are involved our dynamical systems of interest. The context and time are the two ingredients of a dynamical systems. 
	
	The collection of dynamical systems evolving according to time interpretation $\calT$ and within the context $\Context$, forms the functor category 
	\[\DS_{\Context}^{\Time} := \Functor{\Time}{\Context}.\] 
	If the categories $\Time$ and $\Context$ are clear from context, we shall denote $\DS_{\Context}^{\Time}$ simply as $\DS$. Note that each object in this category is a functor $\Phi:\Time \to \Context$. This involves selecting an object $\Omega$ from $\Context$ and a family of endomorphisms $\Phi^t : \Omega \to \Omega$ indexed by $\braces{ t\in \Time }$. This is exactly the definition of a dynamical system, or more precisely, an action of the semigroup $\calT$ on $\Omega$. The diagram below shows the case when $\Time = \SqBrack{\num_0, +}$. See Figure \ref{fig:dyn_functor} for an illustration. 
	
	\begin{figure}
		\centering
		\includegraphics[width=0.26\textwidth]{\figs N_semigroup.pdf}
		\begin{tikzcd} {} \arrow[rrr, mapsto, "\Phi"] &&& {} \end{tikzcd}
		\includegraphics[width=0.26\textwidth]{\figs 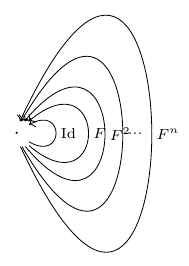}
		\caption{Dynamics as a functor. Section \ref{sec:cat} presents a dynamical system as a functor between a semigroup category $\Time$ representing time, and a category $\calC$ representing the context. Shown here is such a functor $\Phi$ transforming the additive semigroup $\SqBrack{\num_0, +}$ of non-negative integers into self-maps on a space $\Omega$. Since $\SqBrack{\num_0, +}$ is generated by the unit element $1$, the image $F$ of this element generates all iterations of the dynamics. } 
		\label{fig:dyn_functor}
	\end{figure}
	
	This interpretation of dynamical systems as a functor category $\DS$ has provided new insights into the study of dynamical systems and their asymptotic properties \cite[e.g.]{perrone2022kan, Suda2022Poincare, Suda2023dynamical, Suda2022equivalence}. The morphisms in $\DS$ also have a very useful interpretation. Given two objects $\Phi, \Phi' \in \DS$, a morphism between them in $\DS$ is a natural transformation $\alpha : \Phi \Rightarrow \Phi'$. This means that the following commutations hold 
	\begin{equation} \label{eqn:dp30x}
		\begin{tikzcd}
			\Omega \arrow{d}[swap]{\alpha_\Omega} \arrow{r}{\Phi^t} & \Omega \arrow{d}{\alpha_\Omega} \\
			\Omega' \arrow{r}{\Phi'^t} & \Omega'
		\end{tikzcd} ,
		\quad \forall t\in \Time .
	\end{equation}
	as dictated by \eqref{eqn:def:nat_transform}. This commutation turns out to be exactly the concept of \emph{semi-conjugacy} in Dynamical systems theory. Since $\calT$ is a 1-object category, the natural transformation $\alpha$ has only 1 connecting morphism $\alpha_{\Omega}$. Thus without ambiguity and by reuse of notation, we denote the one and only connecting morphism of the natural transform $\alpha$ also as $\alpha$. If $\alpha$ is surjective, then the dynamics $\Phi'^t$ can be considered to be a  partial measurement of the system which evolves autonomously. The following commuting diagram illustrates the case when $\Time = \num_0$
	\[\begin{tikzcd}
		\Omega \arrow[r, "\Phi^1"] \arrow[rr, "\Phi^2", bend left=49] \arrow[rrrr, "\Phi^n", bend left] \arrow[d, "\alpha"] & \Omega \arrow[r, "\Phi^1"] \arrow[d, "\alpha"] & \Omega \arrow[r, "\Phi^1"] \arrow[d, "\alpha"] & ... \arrow[r, "\Phi^1"]  & \Omega \arrow[d, "\alpha"] \\
		\Omega^{'} \arrow[r, "\Phi'^1"'] \arrow[rr, "\Phi'^2"', bend right=49] \arrow[rrrr, "\Phi'^n"', bend right]       & \Omega^{'} \arrow[r, "\Phi'^1"']           & \Omega^{'} \arrow[r, "\Phi'^1"']           & ... \arrow[r, "\Phi'^1"'] & \Omega^{'}           
	\end{tikzcd}\]    
	One dynamical system may be contained in another, or may be a factor of the other. Both relations are special instances of the morphism \eqref{eqn:dp30x}. Thus recognizing the categorical structure of the collection of all time-$\calT$, $\calC$-dynamical systems also implicitly takes into accounts all these relations. At this point we make a technical assumption on the categorical nature of the context $\Context$ :
	
	\begin{Assumption} \label{A:concrete}
		The context category $\Context$ is a topological concrete category, with $U : \Context \to \SetCat$ being its forgetful functor.
	\end{Assumption}
	The notion of a topological concrete category \cite[e.g.]{Herrlich1974topo1, herrlich1974topo2, brummer1984topological, DikranjanEtAl1988topo} extracts certain categorical / structural properties of $\Topo$ to enable a more general and abstract category in which several familiar notions of topology can be realized along with their usual properties. Thus while $\Topo$ remains an example of Assumption \ref{A:concrete}, other categories such as $\MeasCat$ are also examples. Throughout our discussion, several properties of such a category $\Context$ will be invoked, each capturing different aspects of topological space. Section \ref{sec:app:topo_cncrt} presents some technical details.
	
	Time series is not directly generated from a dynamical system, but from measurements made along trajectories of the dynamics. To capture the data-generation process as a functor, we need to establish a category which combines dynamics as well as measurements.
	\section{Observations} \label{sec:obs}
	
	\begin{table} [!t]  
		\caption{ Instances of Assumption \ref{A:obs}. The following instances of $\Context_{obs}$, $\Context$ and $\iota$ are commonly encountered in data-driven approaches to dynamical system reconstruction. }
		\begin{tabularx}{\linewidth}{|l|l|L|}        \hline
			$\Context$ & $\Context_{obs}$ & Interpretation of $\iota : \Context_{obs} \to \Context$ \\ \hline
			$\Topo$ & $\FinSet$ & Observing continuous dynamics on topological spaces via finite-state machines \\ \hline
			$\Topo$ & $\VectCat$ & Real-valued measurements of continuous dynamics on topological spaces, along with linear transforms of the data.  \\ \hline
			$\MeasCat$ & $\FinSet$ & Measurable dynamics being observed by finite measurable partitions \\ \hline
			$\Topo$ & $\textbf{\text{Alex}}$ & Morse-decomposition calculations of continuous dynamics on topological spaces, along with order preserving transformations of the data. See Example \ref{ex:Alexo}  \\ \hline
		\end{tabularx}
		\label{tab:A:obs}
	\end{table}
	
	The simplest interpretation of a dynamical system along with a measurement, is a dynamical system $\Phi^t : \Omega \to \Omega$ along with a map $\phi : \Omega \to \real$. Instead of considering only real-valued measurements, we shall allow more general measurement spaces $\calY$ drawn from some category $\ObsSp$. To prescribe our assumptions on $\ObsSp$ we need to recall certain categorical notions. A morphism $f:a\to b$ is said to be \emph{monic} / \emph{injective} if for any third object $c$ and two morphisms $g, g' : c\to a$, the equality $fg = fg'$ holds iff $g=g'$. Note that in the context of $\SetCat$ this definition is equivalent to the set-theoretic notion of injectivity. One can similarly define a morphism to be \emph{epic} / \emph{surjective}. Given an object $b$, a \emph{subobject} of $b$ refers to any injection $f : a\to b$. Injections and surjections will be depicted by hooked $\begin{tikzcd} {} \arrow[r, hook] & {} \end{tikzcd}$ and double-headed $\begin{tikzcd} {} \arrow[r, two heads] & {} \end{tikzcd}$ arrows respectively. One of the most important properties of being topological concrete is as follows :
	
	\begin{property} \label{Py:product}
		A category $\Context$ satisfying Assumption \ref{A:concrete} has products, finite and infinite.
	\end{property}
	
	In $\SetCat$, $\MeasCat$ and $\Topo$, products are simply cartesian products. A product of an indexed family of spaces $\SetDef{a_i}{i\in I}$ is denoted as $a := \prod_{i\in I} a_i$. It is equipped with coordinate-wise projection maps $\pi_i : a\to a_i$. These projection maps are also \emph{universal} in the sense that any collection of functions $f_i : b\to a_i$ from a common domain $b$ must factor through a unique map $\phi : b\to a$. The categorical notion of products is a structural generalization of these ideas. Section \ref{sec:app:colim} presents more details.
	
	\begin{Assumption} \label{A:obs}
		There is an observation category $\ObsSp$, and a functor $\iota : \ObsSp \to \Context$ s.t. 
		\begin{enumerate} [(i)]
			\item $\ObsSp$ has finite direct products and  $\iota$ preserves them. 
			\item $\ObsSp$ is closed with respect to subobjects in the sense that for every $\Omega \in \ObsSp$ and every injection $A \hookrightarrow \iota \Omega$ in $\mathcal{C}$, there must be some $A' \in \ObsSp$ such that $A = \iota A'$.
		\end{enumerate}
	\end{Assumption}
	
	The category $\ObsSp$ could be $\FinSet$, the sub-category of $\SetCat$ created by finite sets. In this case, when $\Context$ is $\Topo$, $\SetCat$ or $\MeasCat$, $\ObsSp$ is a sub-category and the functor $\iota$ is just inclusion. Table \ref{tab:A:obs} presents several instances of Assumption \ref{A:obs}. An interesting instance worth special mention is as follows :
	
	\begin{example} [Pre-order valued observations] \label{ex:Alexo}
		An Alexandroff topology is a topology which is closed under arbitrary intersection \cite[e.g.]{Arenas1999alex}. There is a one-to-one correspondence between Alexandroff topologies on a set $X$ and preorders (reflexive and transitive relations) on $X$. The preorder naturally associated to any Alexandroff topology is called the \emph{specialization preorder}. Here the relation $x\leq y$ holds if $x$ lies in the closure of $y$. One automatically obtains a subcategory $\textbf{\text{Alex}}$ of $\Topo$. There has been an increased interest in pre-order valued observations, in the form of Morse-decompositions of invariant sets \cite[e.g.]{ConleyZehnder1984, Mischaikow1999Conley, Mischaikow2002topo}. Every dynamical system or more specifically, the omega-limit set of every orbit can thus be associated with a Morse decomposition. This approach to quantifying dynamical systems corresponds to the choice of the category $\ObsSp$ from Assumption \ref{A:obs} being $\textbf{\text{Alex}}$.
	\end{example}
	
	We shall interpret a dynamical system along with a measurement as a 3-tuple $\paran{ \Omega, \Phi^t, \phi }$, where $\phi : \Omega \to \iota(\calY)$ is a morphism in $\Context$ which is to be interpreted as the measurement of the domain $\Omega$. The collection of these 4-tuples also bear a categorical structure. A typical morphism in this category is
	\begin{equation} \label{eqn:def:DSM_morphism}
		\begin{tikzcd}
			\Omega \arrow{d}[swap]{h} \arrow[blue]{r}{f} & \Omega \arrow{d}[swap]{h} \arrow[blue]{r}{\phi} & \iota\calY \arrow{d}[swap]{\iota A} \\
			\Omega' \arrow[Holud]{r}{f'} & \Omega' \arrow[Holud]{r}{\phi'} & \iota\calY^{'}
		\end{tikzcd}
	\end{equation}
	where the blue and yellow rows represent two separate objects in $\DSM$. Note that $h$ is a morphism in $\calC$ representing a change of variables, while $A$ is a morphism in $\calC_{obs}$, which may allow a more restricted class of morphisms. The morphisms in $\DSM$ thus represent change of variables that are consistent not only with the dynamics but also the measurement maps. If two measurement dynamical systems related as in \eqref{eqn:def:DSM_morphism}, then the dataset generated by $\paran{ \Phi^t, \Omega, \phi, \calY }$ can be transformed into that of $\paran{ \Phi'^t, \Omega', \phi', \calY' }$. 
	The categorical structure borne by \eqref{eqn:def:DSM_morphism} is described more conveniently using the language of \emph{comma categories}.
	
	\paragraph{Comma categories} Given any category $\calC$ let $\Hom(\calC)$ denote the collection of all its morphisms. A general arrangement of categories and functors :
	\[\begin{tikzcd}
		\mathcal{A} \arrow[rd, "\alpha"'] & & \mathcal{B} \arrow[ld, "\beta"] \\
		& \mathcal{C} & 
	\end{tikzcd}\]
	creates a special category called a \emph{comma category} $\Comma{\alpha}{\beta}$. Its objects are 
	\[ ob\paran{\Comma{\alpha}{\beta}} := \SetDef{ \paran{ a, b, \phi } }{ a\in ob(\calA), \, b\in ob(\calB), \, \phi \in \Hom_{\calC} \paran{ \alpha a ; \beta b } } , \]
	and the morphisms comprise of pairs
	$\SetDef{ (f,g) }{ f\in \Hom(\calA),\, g\in \Hom(\calB) }$ such that the following commutation holds :
	\[\begin{tikzcd} \blue{ (a,\phi,b) } \arrow{r}{ (f,g) } & \akashi{ (a',\phi',b') } \end{tikzcd} \,\Leftrightarrow \,
	\begin{tikzcd} a \arrow{d}{f} \\ a' \end{tikzcd}, \begin{tikzcd} b \arrow{d}{g} \\ b' \end{tikzcd}, \mbox{ s.t. }
	\begin{tikzcd}
		\blue{ \alpha a } \arrow{r}{\alpha f} \arrow[blue]{d}{\phi} & \akashi{ \alpha a' } \arrow[Akashi]{d}{\phi'} \\
		\blue{ \beta b } \arrow{r}{\beta g} & \akashi{ \beta b' }
	\end{tikzcd}\]
	This category $\Comma{\alpha}{\beta}$ may be interpreted as connections between the functors $\alpha, \beta$, via their common codomain $\calC$. Comma categories contain as sub-structures, the original categories $\calA, \calB$, via the \emph{forgetful} functors
	\[\begin{tikzcd} \calA & \Comma{\alpha}{\beta} \arrow{l}[swap]{\Forget_1} \arrow{r}{\Forget_2} & \calB \end{tikzcd}\]
	whose action on morphisms in $\Comma{\alpha}{\beta}$ can be described as
	\[ \begin{tikzcd} \blue{a} \arrow[blue]{d}{f} \\ \blue{a'} \end{tikzcd}
	\begin{tikzcd} {} & & {} \arrow[ll, "\Forget_1"'] \end{tikzcd}
	\begin{tikzcd}
		\blue{ \alpha a } \arrow[r, "\phi"] \arrow[blue]{d}[swap]{\alpha f} & \akashi{ \beta b } \arrow[Akashi]{d}{\beta g} \\
		\blue{ \alpha a' } \arrow[r, "\phi'"'] & \akashi{ \beta b' }
	\end{tikzcd}
	\begin{tikzcd} {} \arrow[rr, "\Forget_2"] & & {} \end{tikzcd}
	\begin{tikzcd} \akashi{b} \arrow[Akashi]{d}{g} \\ \akashi{b'} \end{tikzcd} \]
	Comma categories prevail all over category theory and mathematics. If either $\calA$ or $\calB$ is taken to be $\star$ the trivial category with a single object , then the resulting comma categories are called left and right \emph{slice-categories} respectively. If $\calA = \calB = \calC$, then the comma category becomes the \emph{arrow-category}. The objects here are the arrows in $\calC$, and the morphisms are commutation squares. Comma, slice and arrow categories thus represent finer structures present within categories.
	
	\paragraph{Dynamics and observation} We have already defined the category $\DS$ of dynamical systems to be $\Functor{\calT}{\Context}$. We have the obvious \emph{domain}-functor
	\[ \dom : \Functor{\calT}{\Context} \to \Context , \quad \begin{tikzcd} \Omega \arrow{d}{f} \\ \Omega \end{tikzcd} \;\mapsto\; \Omega  \]
	which extracts the domain / phase-space of the dynamical system. 
	Now consider the arrangement
	\[\begin{tikzcd}
		\Functor{\calT}{\Context} \arrow[bend right = 20]{dr}[swap]{\dom} & & \ObsSp \arrow[bend left = 20]{dl}{\iota} \\
		& \Context
	\end{tikzcd}\]
	The resulting comma category $ \Comma{\dom}{\iota}$ is the category $\DSM$ of dynamical systems with measurement. 
	
	In all the steps of our analysis as presented in Diagram \ref{eqn:outline:1}, we consider different collections of objects of a particular type, along with their network of inter-relations. We have extended that to the joint concept of a dynamical system with measurement. As outlined in Table \ref{tab:A:obs}, the category $\Context_{obs}$ represents the particular arrangement of measurements and data-transformations that we are tasked with. The role of the functor $\iota$ is to transform every observable object $\calY$ in $\Context_{obs}$ into a $\Context$ object. A choice of a category determines the nature of its morphisms. The measurement $\phi$ acquires the properties not of $\Context_{obs}$ but of $\Context$. For that reason, it binds the domain $\Omega$ not directly to $\calY$ but to its image $\iota(\calY)$ in $\Context$.
	
	So far we have presented categories which collectively represent all dynamical systems, and all dynamical systems along with an measurement map. We next include the notion of an orbit into this. It will be done utilizing yet another categorical construct. 
	
	\section{Orbits} \label{sec:orbits}
	
	\begin{table} [!t]
		\caption{Basic components of the framework. The table summarizes the basic ingredients of our theoretical framework. They are used to construct the various categories and functors in the diagram \eqref{eqn:outline:1}.  See Tables \ref{tab:param2} and \ref{tab:param3} for more advanced constructions based on these components. }
		\begin{tabularx}{\linewidth}{|l|L|}        \hline
			Variable & Interpretation \\ \hline
			$\Context$ & Context category -- the collection of spaces on which we consider various dynamics, along with transformations between them. Examples are measurable spaces, vector spaces and topological spaces. \\ \hline
			$\Time$ & Time category -- a semigroup such as $\paran{\num_0,+}$ or $\paran{\real,+}$ which models time, represented as a 1-point category \\ \hline
			$\Context_{obs}$ & Category of observable space -- a subcategory of $\Context$ consisting of only those objects/spaces which we consider to be observable \\ \hline
			$\Delta t$ & Sampling functor that transforms $\SqBrack{\num_0, +}$ into the general time semi-group $\Time$ (Section \ref{sec:orbits}). \\ \hline
		\end{tabularx}
		\label{tab:param1}
	\end{table}
	
	A full orbit in a deterministic dynamical system is completely determined by its initial point. Thus a pair of a dynamical system and an orbit is in bijective correspondence with a pair of a dynamical system and a point in it. Similarly given a dynamical system, a collection of $m$ orbits corresponds to a collection of $m$ initial points. In this section we discover a categorical structure within such collections. This suggests the following as the category of dynamical systems with orbits :
	
	\begin{definition}
		The category $\DSO$ contains as objects an ordered set of the form $\paran{\Omega, \Phi^t, \omega}$, where $\paran{\Omega, \Phi^t}$ is an object of $\DS$, and $\omega : X\to \Omega$ is some $\Context$-morphism. A morphism $\paran{\Omega, \Phi^t, \omega} \to \paran{\Omega', \Phi'^t, \omega'} $ is a morphism $h:\paran{\Omega, \Phi^t} \to \paran{\Omega', \Phi'^t}$ in $\DS$, and a $\Context$-injection $g:X\to X'$ such that the following commutation holds
		\[\begin{tikzcd} [row sep = large]
			X \arrow[d, "g"', hook] \arrow[rr, "\omega", blue] && \Omega \arrow{d}[swap]{h} \arrow[blue]{r}{\Phi^t} & \Omega \arrow{d}[swap]{h} \\
			X' \arrow[rr, "\omega'", Holud] && \Omega' \arrow[Holud]{r}{\Phi'^t} & \Omega'
		\end{tikzcd}\]
		Thus morphisms in this category are transformations of the domain $\Omega$ that preserve the initial points as well as create a semi-conjugacy.
	\end{definition}
	
	Note that one also has a domain functor $\dom : \DSO \to \Context$ which extracts the domain object from a $\DSO$ object. Now that we have defined the categories $\DSM$ and $\DSO$ we can combine them into a more complex construct, via the following arrangement of functors and categories
	\[\begin{tikzcd}
		\DSO \arrow[bend right=20]{drr}[swap, pos=0.3]{ \dom } && && \Context_{obs} \arrow[bend left=20]{dll}[pos=0.3]{ \iota } \\
		&& \Context 
	\end{tikzcd}\]
	This leads to a comma category which we called the category of dynamics, orbits, and measurements : 
	\begin{equation} \label{eqn:def:DSMO}
		\DSMO := \Comma{ \dom }{ \iota }
	\end{equation}
	The objects in this category is a triplet of a dynamical system $(\Omega, \Phi^t)$, a measurement $\Phi : \Omega \to \calY$, and a morphism $\omega : X\to \Omega$ representing a collection of initial points. Note that the first two members form an object in $\DSM$. This leads to a succession of projection functors
	\[\begin{tikzcd}
		&& \DSM \arrow[drr, "\proj"] \arrow[drrrr, bend left=10, dashed, "\dom"] \\
		\DSMO \arrow[urr, "\proj"] \arrow[drr, "\proj"'] && && \DS \arrow[rr, "\dom"] && \calC \\
		&& \DSO \arrow[urr, "\proj"'] \arrow[urrrr, bend right=10, dashed, "\dom"'] \\
	\end{tikzcd}\]
	We have reused notation to also denote the composite functors from $\DSM$ and $\DSO$ into $\calC$ as $\dom$.
	This functorial relation encodes how the objects and relations in $\DS$, $\DSM$ and $\DSMO$ are contained in the other. We formally define
	
	\begin{definition} [Dynamical systems with measurements along orbits]
		$\DSMO$ is the category in which each object consists of a dynamical system $(\Omega, \Phi^t)$, a measurement map $\phi : \Omega \to \iota Y$, and a morphism $\omega : X\to \Omega$. Such an object is represented as
		\[\begin{tikzcd}
			X \arrow[rr, "\omega"'] && \Omega \arrow["\Phi^t"', loop, distance=2em, in=125, out=55] \arrow[r, "\phi"'] & \iota \calY
		\end{tikzcd}\]
		By Assumption \ref{A:obs} and the existence of epi-mono factorization, which will be explained later, we may assume $\phi$ to be an epimorphism without loss of generality.
	\end{definition}
	
	A typical morphism is a triple of morphisms $h:\Omega \to \Omega'$, $A : \calY \to \calY'$ and an injection $g:X\to X'$ such that the following commutations hold for all times $t$ : 
	\[\begin{tikzcd} [row sep = large]
		X \arrow[rr, "\omega", blue] \arrow[d, "g"'] && \Omega \arrow{d}[swap]{h} \arrow[blue]{r}{\Phi^t} & \Omega \arrow{d}[swap]{h} \arrow[blue]{r}{\phi} & \iota \calY \arrow{d}[swap]{\iota A} \\
		X' \arrow[rr, "\omega'", Holud] && \Omega' \arrow[Holud]{r}{\Phi'^t} & \Omega' \arrow[Holud]{r}{\phi'} & \iota \calY'
	\end{tikzcd}\]
	So a morphism between two $\DSMO$ objects is a re-parameterization of the phase space which preserves the initial points as well as the measurements. We shall see in the next two sections that dynamical systems related this way would yield time series such that the information content of one is included in another. We have so far derived the nodes $\DS$, $\DSM$ and $\DSMO$ of the figure in \eqref{eqn:outline:1}. The category $\DSMO$ may be interpreted as a comma category
	\[ \DSMO = \left[ \begin{tikzcd} \Context \arrow[dr, bend right=20, "\Id"'] & & \DSM \arrow[dl, bend left=20, "\dom"'] \\ & \Context \end{tikzcd} \right]\]

	There is a natural inclusion of $\DSM$ into $\DSMO$ : 
	\begin{equation} \label{eqn:def:DSM_to_DSMO}
		\begin{tikzcd} \DSM \arrow[rr, "\kappa", "\subset"'] && \DSMO \end{tikzcd} , \quad 
		\begin{tikzcd} \paran{\begin{array}{c} \Omega \\ \Phi^t \\ \phi  \end{array}} \arrow[rr, mapsto] && \paran{\begin{array}{c} \Omega \\ \Phi^t \\ \phi \\ \Id_\Omega \end{array}} \end{tikzcd}
	\end{equation}
	This inclusion thus simply adds to every measured dynamical system, the set of all its points. We have so far derived the nodes $\DS$, $\DSM$ and $\DSMO$ of the figure in \eqref{eqn:outline:1}. 
	
	\begin{table} [!t]
		\caption{Secondary set of components of the framework. The table summarizes the various categories and functors created from the building blocks declared in Table \ref{tab:param1}. These categories and functors make a direct appearance in the diagram \eqref{eqn:outline:1}. See Tables \ref{tab:param3} for more advanced constructions based on these components. }
		\begin{tabularx}{\linewidth}{|l|L|}        \hline
			Variable & Interpretation \\ \hline
			$\DS$ & Category of dynamical systems -- a categorical interpretation of self maps on objects from the context category $\Context$ \\ \hline
			$\TimeB$ & Time instants, or elements of the time semigroup $\Time$, existing as an object of $\Context$ \\ \hline
			$\Seq$ & Category of shift spaces based in spaces selected from $\Context_{obs}$  \\ \hline
			$\FinSeq$ & Category of datasets of a fixed length, based in spaces selected from $\Context_{obs}$ \\ \hline
			$\TSD$ & Categorical interpretation of time series, in which morphisms represent the notion of information embedding \\ \hline        
			$\Obs$ & functor $\DSMO \to \Seq$ associating to any measurement along an orbit of a dynamical system, the generated time series \\ \hline        
			$\word$ & functor $\Seq \to \TSD$ embedding any subshift into the larger and more general category of time series data \\ \hline             
			$\Recon$ & functor $\TSD \to \DS$ that represents any general reconstruction scheme that converts time series data into dynamical systems \\ \hline   
		\end{tabularx}
		\label{tab:param2}
	\end{table}
	
	\paragraph{Discretization} Even if the time $\calT$ is a continuum, data is usually collected at discrete instants of time. This conversion of data can also be expressed  functorially :
	
	\begin{definition}
		Given a semi-group category $\Time$, a \emph{sampling} is a functor $\Delta t : \SqBrack{\num_0, +} \to \Time$. Such functors are in one-to-one correspondence with semigroup homomorphisms from $\SqBrack{\num_0, +}$ to $\Time$. Since the semigroup $\SqBrack{\num_0, +}$ is finitely generated, such a functor is uniquely defined by the image of its generating element. By a reuse of notation, this morphism in $\Time$ will also be denoted as $\Delta t$.
	\end{definition}
	
	The sampling $\Delta t$ leads to the arrangement 
	\[\begin{tikzcd}
		\SqBrack{\num_0, +} \arrow[r, "\Delta t"] & \Time \arrow{r}{\Phi} & \Context
	\end{tikzcd}\]
	In other words, a time-$\Time$ dynamics on domain $\Context$ can be converted into a discrete time set-theoretic dynamical system. In fact this pre-composition by $\Delta t$ is itself a functor between functor categories, to be denoted as $\circ \Delta t$. Next note the commutation diagram below on the left :
	\[\begin{tikzcd}
		\DS^{\Time}_{\Context} \arrow{d}{\Delta t} \arrow{r}{\dom} & \Context \arrow{d}{\Id} & \Context_{obs} \arrow[l, "\iota"'] \arrow[d, "\Id"] \\
		\DS^{\num_0}_{\Context} \arrow{r}[swap]{\dom} & \Context & \Context_{obs} \arrow[l, "\subseteq"]
	\end{tikzcd} \imply 
	\begin{tikzcd} \DSM^{\Time}_{\Context} \arrow[d, Shobuj, dotted, "\Discretize_{\Delta t}"]  \\ \DSM^{\num_0}_{\Context} \end{tikzcd}\]
	Note that both the horizontal rows in this diagram constitute two comma categories. We show later in Lemma \ref{lem:oh9d} in the Appendix that in any diagram of this pattern where vertical arrows connect two comma categories, there in induced functor between the comma categories. We call the induced functor $\Discretize_{\Delta t}$ the \emph{discretization} functor. Similarly we have 
	\begin{equation} \label{eqn:def:Dscrt:2}
		\begin{tikzcd}
			\Context \arrow[d, "\Id"] \arrow[r, "\Id"] & \Context \arrow[d, "\Id"] & \DSM^{\Time}_{\Context} \arrow{d}{\Discretize_{\Delta t}} \arrow[l, "\dom"'] \\
			\Context \arrow[r, "\Id"'] & \Context & \DSM^{\num_0}_{\Context} \arrow[l, "\dom"] 
		\end{tikzcd} \imply 
		\begin{tikzcd} \DSMO^{\Time}_{\Context} \arrow[d, Shobuj, dotted, "\Discretize_{\Delta t}"]  \\ \DSMO^{\num_0}_{\Context} \end{tikzcd}
	\end{equation}
	By a reuse of terminology we also call this a discretization functor. If the sampling interval $\Delta t$ is clear from context, it will be dropped from the subscript. It converts any dynamical system on a general mathematical space, and adherent to a general structure of time, into a discrete-time set-theoretic dynamics. Explicitly, this functor maps each $(\Omega, \Phi, \phi, \Xi)$ to $(j\Omega, j\paran{\Phi^{n \Delta t}}, j\phi, j\Xi)$, where $j\Xi = \{j\omega : \omega \in \Xi\}$. Here, the domain does not have any mathematical structure other than that of a loose set of points. This is in essence, the starting point of all data-driven numerical methods. 
	
	This completes the description of yet another link in Diagram \eqref{eqn:outline:1}. We have provided a rigorous and categorical definition of dynamical systems, observed dynamical systems, and dynamical systems being observed along orbits. Note that none of the latter two constructs required the concept of points, sets or maps. They emerge as a category or collection of diagrams. The next items to build are the category $\Seq$ and functor $\Obs$. Before that, we examine some finer properties of topological concrete categories, as established in Assumption \ref{A:concrete}.
	
	\section{Role of the concretization functor} \label{sec:cncrt}
	
	Recall that Assumption \ref{A:concrete} requires a functor $U:\Context\to \SetCat$. In this section, we examine the utility of this functor as well as the strong properties it provides to $\Context$. The functor $U$ allows simple and intuitive set theoretic properties to be adapted for use in the more complex category $\Context$.
	
	\paragraph{Epi-mono factorization} A category is said to have an epi-mono factorization if any morphism $f:a\to b$ can be factorized as a composition of an monomorphism with a epimorphism, as shown below :
	\[\begin{tikzcd} a \arrow[r, two heads] & \mathrm{im} f \arrow[r, hook] & b \end{tikzcd} ,\]
	Moreover, this factorization must be unique up to isomorphism. In our setting of topological concrete categories, monomorphisms can be regarded as injections and epimorphisms are surjections.
	
	Epi-mono factorizations are intuitive and commonly used in set theory as well as in linear algebra. The intermediary object is called the \emph{image} of the morphism $f$, as indicated in the diagram above. It resides as a sub-object of $b$.  One of the most useful consequences of Assumption \ref{A:concrete} is
	
	\begin{property} \label{Py:epi_mono}
		A topological concrete category allows epi-mono factorization of morphisms, where the image of the monomorphism part by $U$ is just a set theoretic inclusion.
	\end{property}
	
	We next investigate an even stronger form of this simple and useful property.
	
	\paragraph{Factorization via image} Consider the following simple diagram in $\SetCat$ on the left below
	\[\begin{tikzcd}
		\tilde{X} \arrow[dd, "\tilde{p}"'] \arrow[rrr, "\tilde{f}"] &&& \tilde{Y} \arrow[dd, "\tilde{q}"] \\
		\\
		\tilde{X}' \arrow[rrr, "\tilde{g}"] &&& \tilde{Y}'
	\end{tikzcd} \;\Rightarrow\; 
	\begin{tikzcd}
		\tilde{X} \arrow[dd, "\tilde{p}"'] \arrow[dr, two heads] \arrow[rrr, "\tilde{f}"] &&& \tilde{Y} \arrow[dd, "\tilde{q}"] \arrow[dl, two heads] \\
		& im( \tilde{p} ) \arrow[dl, hook] \arrow[r, dotted, "!"] & im( \tilde{q} ) \arrow[dr, hook]\\
		\tilde{X}' \arrow[rrr, "\tilde{g}"] &&& \tilde{Y}'
	\end{tikzcd}\]
	Any commutation between maps $\tilde{f}, \tilde{q}, \tilde{g}, \tilde{q}$ means that there as induced map between the images of $\tilde{p}$, $\tilde{q}$ as shown in the right diagram as a dotted arrow. The original commutation square can be thus be factored through the more basic commutation square occupying its top half. This elementary set-theoretic observation extends to $\Context$ as well : 
	
	\begin{lemma} \label{lem:U_set:1}
		Suppose Assumption \ref{A:concrete} holds and we have the following two diagrams in $\Context$ and $\SetCat$ respectively. 
		\[
		\begin{tikzcd}
			X \arrow[d, "p"'] \arrow[r, "f"] & Y \arrow[d, "q"] \\
			X' & Y'
		\end{tikzcd} \;
		\begin{tikzcd}
			UX \arrow[rr, "Uf"] \arrow[d, two heads] &  & UY \arrow[d, two heads] \\
			{\mathrm{im}\,Up} \arrow[rr, "G"]        &  & {\mathrm{im}\,Uq}      
		\end{tikzcd} \;\Rightarrow\;
		\begin{tikzcd}
			X \arrow[rr, "f"] \arrow[d, two heads] &  & Y \arrow[d, two heads] \\
			{\mathrm{im}\,p} \arrow[rr, "g"]      &  & {\mathrm{im}\,q}    
		\end{tikzcd}\]
		Then, there exists a unique morphism $g $ such that $Ug = G$ and the commutation on the right holds.
	\end{lemma}
	
	Lemma \ref{lem:U_set:1} is an example of how topological concrete categories allow set theoretic calculations to be lifted to the abstract category. The following consequence of Lemma \ref{lem:U_set:1} is of importance to us :
	
	\begin{lemma} \label{lem:U_set:2}
		Suppose Assumption \ref{A:concrete} holds and we have the commutative diagram in $\Context$ shown below on the left
		\[
		\begin{tikzcd}
			X \arrow[rr, "f"] \arrow[d, "p"] &  & Y \arrow[d, "q"] \\
			X' \arrow[rr, "k"]               &  & Y'              
		\end{tikzcd} \;\Rightarrow\;
		\begin{tikzcd}
			UX \arrow[rr, "Uf"] \arrow[d, two heads] &  & UY \arrow[d, two heads] \\
			{\mathrm{im}\,Up} \arrow[rr, "Ug"]        &  & {\mathrm{im}\,Uq}      
		\end{tikzcd} ,\, 
		\begin{tikzcd}
			X \arrow[rr, "f"] \arrow[dd, bend right=30, "p"'] \arrow[d, two heads] &  & Y \arrow[d, two heads] \arrow[dd, bend left=30, "q"] \\
			{\mathrm{im}\,p} \arrow[rr, "g"] \arrow[d, hook] &  & {\mathrm{im}\,q} \arrow[d, hook] \\
			X' \arrow[rr, "k"] & & Y' 
		\end{tikzcd}
		\]
		then there is a morphism $g : \mathrm{im} (p) \to \mathrm{im} (q)$ s.t. the  two commutations on the right holds.
	\end{lemma}
	
	The epi-mono factorization property guarantees the existence of images of morphisms. The essence of Lemma \ref{lem:U_set:2} is that any commutation square factors through the images of any pair of opposing morphisms. While this property is intuitive and even trivial for $\SetCat$, it has to be established by the diagrammatic calculus of categories for an abstract category such as in Assumption \ref{A:concrete}. Lemma \ref{lem:U_set:2} will be used to establish the categorical notion of \emph{orbits}.
	
	\paragraph{Invariance} Take any $\DSO$ object $A \xrightarrow{\omega} \Omega \xrightarrow{\Phi^t} \Omega$ and introduce the notations
	\[\tilde{\Phi} := U\Phi , \; \tilde{A} := U( \omega(A) ), \; \tilde{\Omega} := U(\Omega) .\]
	to denote the usual set-theoretic description of the dynamics.
	This leads to the following diagram of sets and maps in the category $\SetCat$ :
	\[\begin{tikzcd} [scale cd = 0.7]
		& \tilde{\Phi}^{t_1} \paran{ \tilde{A} } \arrow[bend left=15, rrrr, dotted, pos=0.8, "\tilde{\Phi}^{t+s}"] \arrow[dl, "\subset"] \arrow[ rr, pos=0.8, "\tilde{\Phi}^t" ] & & \tilde{\Phi}^{t_1+t} \paran{ \tilde{A} } \arrow[dl, "\subset"] \arrow[ rr, pos=0.8, "\tilde{\Phi}^s" ] & & \tilde{\Phi}^{t_1+t+s} \paran{ \tilde{A} } \arrow[dl, "\subset"] \\
		\bigcup_{t\in\Time} \tilde{\Phi}^{t} \paran{ \tilde{A} } \arrow[dd, Itranga, pos=0.4, "\subset"] \arrow[bend right=15, rrrr, dotted, pos=0.9, "\tilde{\Phi}^{t+s}"', Holud] \arrow[ rr, "\tilde{\Phi}^t", Holud ] & & \bigcup_{t\in\Time} \tilde{\Phi}^{t} \paran{ \tilde{A} } \arrow[dd, Itranga, pos=0.4, "\subset"] \arrow[ rr, "\tilde{\Phi}^s", Holud ] & & \bigcup_{t\in\Time} \tilde{\Phi}^{t} \paran{ \tilde{A} } \arrow[dd, Itranga, pos=0.4, "\subset"]  \\
		& \tilde{\Phi}^{t_2} \paran{ \tilde{A} } \arrow[bend right=15, rrrr, dotted, pos=0.8, "\tilde{\Phi}^{t+s}"'] \arrow[ul, "\subset"'] \arrow[ rr, pos=0.8, "\tilde{\Phi}^t"' ] & & \tilde{\Phi}^{t_2+t} \paran{ \tilde{A} } \arrow[ul, "\subset"'] \arrow[ rr, pos=0.8, "\tilde{\Phi}^s"' ] & & \tilde{\Phi}^{t_2+t+s} \paran{ \tilde{A} } \arrow[ul, "\subset"'] \\
		\tilde{\Omega} \arrow[bend right=15, rrrr, dotted, pos=0.9, "\tilde{\Phi}^{t+s}"', Akashi] \arrow[ rr, "\tilde{\Phi}^t", Akashi ] & & \tilde{\Omega} \arrow[ rr, "\tilde{\Phi}^s", Akashi ] & & \tilde{\Omega}
	\end{tikzcd}\]
	The yellow and purple rows represent two separate dynamical systems , and the vertical red arrows serve as a semi-conjugacy. By the initial lift property of topological concrete categories along with Lemma \ref{lem:U_set:2}, this leads to the creation of a semi-conjugacy of dynamics in $\Context$
	\[\begin{tikzcd}
		\Orbit(A; \Phi) \arrow[d, Itranga, "\subseteq"'] \arrow[rr, Holud, "\Phi^t"] && \Orbit(A; \Phi) \arrow[d, Itranga, "\subseteq"] \\
		\Omega \arrow[rr, Akashi, "\Phi^t"'] && \Omega 
	\end{tikzcd}\]
	The object $\Orbit(A; \Phi)$, interpreted as the \emph{orbit} of the sub-object $A$ under the dynamics $\Phi$. is itself a sub-object or semi-dynamical system of $\paran{\Omega, \Phi^t}$. Intuitively,  $\Orbit(A; \Phi)$ represents the minimal forward invariant set that contains $A$. Structurally, $\Orbit(A; \Phi)$ represents the minimal dynamical system that is semi-conjugate to $\paran{\Omega, \Phi^t}$ and contains a copy of $A$. This leads to a functor 
	\begin{equation} \label{eqn:def:ForwInv:1}
		\ForwInv : \DSO \to \DS, \quad \paran{A, \Omega, \Phi^t} \mapsto \paran{\Orbit(A; \Phi), \Phi^t}
	\end{equation}
	One analogously has a functor
	\begin{equation} \label{eqn:def:ForwInv:2}
		\ForwInv : \DSMO \to \DSM, \quad \paran{A, \Omega, \Phi^t, \phi} \mapsto \paran{\Orbit(A; \Phi), \Phi^t, \phi}
	\end{equation}
	where we have reused the notation for simplicity. The notion of invariance enables the concept of sequence spaces, which embody both observations as well as autonomous dynamical systems. 
	
	\section{Shift spaces} \label{sec:shift} 
	
	Shift spaces \cite[e.g.]{MarcusLind1995, BDWY2020} are secondary dynamical systems associated to any dynamical system with measurement. They track the dynamics induced on all possible sequences of measurements. In this section we present a purely categorical notion of subshifts, that retains all the associated notions of symbols and strings. A key tool is the existence of infinite products guaranteed by Property \ref{Py:product}. Given any $\calC$-object $\mathbb{A}$ one can form the infinite product $\mathbb{A}^{\num}$. This product object comes with a collection of projection morphisms indexed as $\pi^{\num}_{i}$ as shown below in yellow :
	\[\begin{tikzcd}
		& & & & \Holud{\mathbb{A}} \\
		X \arrow[rrr, dotted, pos=0.7, "\exists \prod_{i\in \num} f_i", Shobuj] \arrow[rrrrd, bend right = 10, "f_{i_2}"'] \arrow[rrrru, bend left = 10, "f_{i_1}"] & & & \Holud{ \mathbb{A}^{\mathbb{N}} } \arrow[ur, "\pi^{\num}_{i_1}", Holud] \arrow[dr, "\pi^{\num}_{i_2}"', Holud] \arrow[r, "\cdots", Holud] & \Holud{\vdots} \\
		& & & & \Holud{\mathbb{A}}
	\end{tikzcd}\]
	When $\calC$ equals $\SetCat, \Topo$ or $\MeasCat$ these morphisms represent the usual coordinate-wise projections. In a general category it is defined by a universal property : any collection of morphisms $\SetDef{f_i : X\to \mathbb{A}}{i\in\num}$ as shown above must factor through a unique morphism $X\to \mathbb{A}^{\num}$ as shown in green above. This morphisms is suggestively labeled as the product of the $f_i$s. Thus a product is the universal object that encodes joint information of its factors.
	
	\begin{definition}[Shift morphism]
		Let  $\Context$ be as in Assumption \ref{A:concrete}. For an object $\mathbb{A}$ in $\Context$, the \emph{shift morphism} associated to $\mathbb{A}$ is the unique morphism $\sigma: \mathbb{A}^{\num} \to \mathbb{A}^{\num}$ which enables the following commuting diagram:
		\[
		\begin{tikzcd}
			\mathbb{A}^{\num} \arrow[rr, "\sigma", dashed] \arrow[rd, "\pi^{\num}_{n+1}"'] &   & \mathbb{A}^{\num} \arrow[ld, "\pi^{\num}_n"] \\
			& \mathbb{A} &
		\end{tikzcd} , \quad \forall n\in \num.
		\]
		where $\pi^{\num}_n$ is the projection to $n$-th component.
	\end{definition}
	
	The object $\mathbb{A}$ plays the role of an \emph{alphabet}. Note that any $\Context$-object $\mathbb{A}$ thus produces an domain $\mathbb{A}^{\num}$ and an endomorphism $\sigma$. The resulting  discrete time dynamics is precisely the \emph{full shift-space} with alphabet $\mathbb{A}$. A finer object of a full-shift is a \emph{subshift} :
	
	\begin{definition} [Subshift spaces]
		Let $\Context$ be as in Assumption \ref{A:concrete}. Given a $\Context$-object $\mathbb{A}$, a subshift on alphabet $\mathbb{A}$ is a subobject $m:S \hookrightarrow \mathbb{A}^{\num}$ satisfying the following commutative diagram:
		\[
		\begin{tikzcd} [column sep = large]
			S \arrow[rr, bend left=20, dotted, Shobuj, "\sigma|S"] \arrow[d, "m", hook] \arrow[r, two heads] & \sigma(S) \arrow[rd, hook, "m'"] \arrow[r, hook] & S \arrow[d, "m", hook] \\
			\mathbb{A}^{\num} \arrow[rr, "\sigma"] && \mathbb{A}^{\num}
		\end{tikzcd}
		\]
	\end{definition}
	
	In the diagram above, the morphisms $m'$ and $\sigma \rvert S$ are defined by the epi-mono factorization of $\sigma \circ m$. They are guaranteed to exist regardless of the nature of the morphism $m$. The extra requirement is that the injection $m'$ factorizes through $m$ as shown in the upper right commuting triangle. As a result, the composite morphism created by the top horizontal row becomes a self-map on $S$, which we denote as $\sigma|S$. Note that $\sigma|S$, $\sigma$ and $m$ together create a semi-conjugacy as discussed earlier in Section \ref{sec:cat}. Sub-shifts are thus sub-systems embedded within a full shift space. We next discover their own categorical structure.
	
	\begin{definition} [Category of subshifts]
		Let $\Context$ be as in Assumption \ref{A:concrete}.
		The subcategory of $\DS_{\num_0}^{\Context} = \Functor{\num_0}{\Context}$ generated by the collection of all subshifts form a category $\Seq_{\Context}$.
	\end{definition}
	
	If the context $\Context$ is clear, we drop the subscript to denote the category simply as $\Seq$. By virtue of being a sub-category, a morphism between two subshifts $S, S'$ comprises of a $\Context$-morphism $f$ such that the following diagram commutes:
	\[
	\begin{tikzcd}
		S \arrow[d, "f"'] \arrow[r, two heads] & \sigma S \arrow[r, hook] & S \arrow[d, "f"] \\
		S' \arrow[r, two heads] & \sigma S' \arrow[r, hook] & S'
	\end{tikzcd}
	\]
	We denote the underlying set $\mathbb{A}$ of object $S$ by $|S|$. It is referred to as the \emph{alphabet} of the shift space $S$. The most commonly investigated subshifts correspond to those with a finite alphabet $\mathbb{A}$. Suppose that the phase space of a dynamical system is partitioned into $\abs{ \mathbb{A} }$ subsets with each piece of the partitioned indexed by some element of $\mathbb{A}$. Then each orbit in the phase space is associated with an itinerary of the partition-cells it visits. This itinerary is just an element of $\mathbb{A}^{\num}$. The collection of all possible itineraries is a subshift with alphabet $\mathbb{A}$. 
	Shift spaces are dynamical systems of their own right, situated in $\Context$. The space $\Seq$ of subshifts has a finer sub-structure to it, based on the notion of decidability. Not every morphism in $\Seq$ is decidable in the sense that the image can be determined by finite length observation of the sequence.
	
	\begin{example} Consider two objects in $\Seq_\SetCat$: the full shift $T = \{0,1\}^\num$ on $2$ symbols, and the subshift
		\[S = \{\xi \in \{0,1\}^\num : \text{ there exists }N\text{ s.t. } \xi_k = 0 \text{ for all } k \geq N\}.\]
		We set up a map $\phi: S \to T$ by
		\[\phi(\xi)_k = 
		\begin{cases}
			0 & \text{ if } \xi_i = 0 \text{ for all } i\geq k\\
			1 &\text{ otherwise }
		\end{cases} \]
		Then, $\phi$ is a morphism. However, $\phi(\xi)$ cannot be determined by a finite observation of $\xi$.
	\end{example}
	
	To codify the meaning of decidability, we need the notion of \emph{words}. To establish these notions we take note of a unique  morphism $\pi_{\num\to n}: \mathbb{A}^{\num} \to \mathbb{A}^n$ defined by the following commutations:
	\[
	\begin{tikzcd}
		\mathbb{A}^{\num} \arrow[rr, "\pi_{\num\to n}", dotted] \arrow[rd, "\pi^{\num}_{i}"'] & & \mathbb{A}^n \arrow[ld, "\pi^n_i"] \\
		& \mathbb{A} & 
	\end{tikzcd}, 
	\quad \forall 1\leq i \leq n.
	\]
	Again, this projection $\pi_{\num\to n}$ is easy to decipher for Cartesian products. In a general categorical formulation, they are defined structurally by the commutations above.
	
	\begin{definition} [Words]
		Fix an object $S$ in $\Seq$ and an $n \in \num$. consider the top horizontal row of the following diagram :
		\[
		\begin{tikzcd}
			S \arrow[r, hook] \arrow[rd, two heads] & \mathbb{A}^{\num} \arrow[r, "\pi_{\num\to n+1}"] & \mathbb{A}^{n+1} \\
			& \word_n(S) \arrow[ru, hook] & 
		\end{tikzcd}
		\]
		The epi-mono factorization of the top row leads to an object labeled $\word_n(S)$. It is called the collection of $n$ words of the subshift $S$.
	\end{definition}
	
	Intuitively the $n$-th word function associates to each object $S$ in $\Seq$ the set of all finite subsequences of length $n+1$ occurring in $S$. 
	
	\begin{example}
		When $\Context = \SetCat$ and $S$ is an object in $\Seq$, then for each $n\in \num$ the word functor acts as 
		\[ \word_n(S) := \{\xi_0 \xi_1 \cdots \xi_n : \xi \in S\} \]
	\end{example}
	
	\begin{definition} [Decidable morphisms]
		A morphism $f:S \to S'$ in $\Seq$ is an \emph{$n$-morphism} if there is a morphism $f_0: \word_n(S) \to \word_0(S')$ such that :
		\[
		\begin{tikzcd}
			S \arrow[rr, "f"] \arrow[d, two heads] & & S' \arrow[d, two heads] \\
			\word_n(S) \arrow[rr, "f_0"] & & \word_0(S') 
		\end{tikzcd}
		\]
	\end{definition}
	
	The morphism $f_0$ in this definition is determined uniquely from morphism $f$. We will call this to be the \emph{generator} of the morphism $f$. The uniqueness follows from the surjectivity of the vertical $\word$-morphisms. The collection of $n$-morphisms for various $n$-s are called \emph{decidable}, \emph{finitely decidable} or simply \emph{finite} morphisms.
	
	\begin{example}
		In a set theoretic context, a morphism $\phi: S \to T$ in $\Seq$ is an \emph{$n$-morphism} if the first $n+1$ entries of a sequence determines the first entry of the image. Namely,     if $\xi, \xi' \in S$ satisfies $\xi_0 \xi_1 \cdots \xi_n = \xi'_0 \xi'_1 \cdots \xi'_n$, then we have $\phi(\xi)_0 = \phi(\xi')_0$. 
	\end{example}
	
	\begin{example}
		\begin{enumerate}
			\item The identity map $\Id_S:S \to S$ is a 0-morphism.
			\item The shift map $\sigma: S \to S$ is a 1-morphism.
			\item An $n$-morphism is an $m$-morphism for all $m \geq n$.
		\end{enumerate}
	\end{example}
	
	Data processing procedures, such as moving average or difference, can be formulated as finite morphisms. To give an explicit description of the structure of $n$-morphisms, we need the following observation.
	
	\begin{lemma} \label{lem_rep}
		Let $f:S \to S'$ be an $n$-morphism, and $f_0$ its generator. Then, we have
		\[
		(Uf)(\xi)_i = (Uf_0) \paran{ \xi_i\xi_{i+1}\cdots\xi_{i+n} }, \quad \forall \xi \in U(S), \; i \in \num.
		\]
	\end{lemma}
	
	Lemma \ref{lem_rep} is proved in Section \ref{sec:app:subshift} in the Appendix. The image under $U$ of any subshift is also a set-theoretic subshift. Lemma \ref{lem_rep} provides an explicit formula for the generator of this subshift.
	
	\paragraph{Terminal object} A terminal object $\bar{x}$ in a category $\calX$ has the property that for any object $x$ of $\calX$, there is a unique morphism $x \to \bar{x}$. A terminal object is usually denoted as $1_{\calX}$. The right slice category $\Comma{1_{\calX}}{\calX}$ of $1_{\calX}$ will be called the category of \emph{pointed objects} of $\calX$. The objects are morphisms $a : 1_{\calX} \to A$, and a morphism between any two such objects $a : 1_{\calX} \to A$ and $a' : 1_{\calX} \to A'$ is a morphism $\phi : A \to A'$ such that $\phi \circ a = a'$. Note that each object of $\Comma{1_{\calX}}{\calX}$ requires a choice of an object $A$, followed by the choice of a morphism $a : 1_{\calC} \to A$. The latter is interpreted as a point in the object $a$, hence the name. Fortunately, such an object is guaranteed to us : 
	
	\begin{property} \label{Py:terminal}
		A category $\Context$ satisfying Assumption \ref{A:concrete} has a terminal object $1_{\Context}$.
	\end{property}
	
	\begin{definition} [Strings]
		Let $S$ be a subshift on alphabet $\mathbb{A}$. A \emph{string} of length $n+1$ is a morphism of the form
		\[\begin{tikzcd} 1_{\Context} \arrow[rr, "w"] && \word_n(S) \end{tikzcd}\]
	\end{definition}
	
	\begin{example}
		When $\Context$ is either $\SetCat, \Topo$ or $\MeasCat$, any morphism originating from the terminal object represents a point. In these context, a string fits the usual definition of being a point in a collection of words.
	\end{example}
	
	\begin{definition} [Cylinders]
		Let $S$ be a subshift on alphabet $\mathbb{A}$, and $1_{\Context} \xrightarrow{w} \word_n(S)$ be a string. Then the cylinder around the string $w$ is the unique object $\Cylinder(w)$ created from the following pullback diagram
		\[\begin{tikzcd} [column sep = large]
			\Cylinder(w) \arrow[r, dotted, "!"] \arrow[d, dotted, hook] & 1_{\Context} \arrow[d, "w"] \\
			S \arrow[r, "\word_n"'] & \word_n(S)
		\end{tikzcd}\]
	\end{definition}
	
	\begin{example}
		When $\Context$ is either $\SetCat, \Topo$ or $\MeasCat$, the cylinder $\Cylinder(w)$ denotes the subset or subspace of $S$ whose first $n+1$ symbols coincide with the symbols in $w$. 
	\end{example}
	
	\begin{lemma} \label{lem:cylinder}
		Let Assumption \ref{A:concrete} hold. Then for every string $w$, the cylinder $\Cylinder(w)$ is a subobject of $S$.
	\end{lemma}
	
	Lemma \ref{lem:cylinder} is proved in Section \ref{sec:app:subshift} in the Appendix.
	
	\begin{lemma} \label{lem_steps}
		Let $f:S \to S'$ be an $n$-morphism. Then, we have a family of morphisms $f^{(m)}: \word_{n+m}(S) \to \word_{m}(S') $ indexed by $m\in\num$ such that the following commutations hold :
		\[
		\begin{tikzcd}
			S \arrow[rr, "f"] \arrow[d, two heads] & & S' \arrow[d, two heads] \\
			\word_{m+n}(S) \arrow[rr, "f^{(m)}"] & & \word_m(S') 
		\end{tikzcd} .
		\]
	\end{lemma}
	
	Lemma \ref{lem_steps} is proved in Section \ref{sec:app:subshift} in the Appendix. Another important observation to be made is that finite morphisms are closed under composition : 
	
	\begin{lemma} \label{lem:fin_compose}
		The composition of an $n$-morphism with an $m$-morphism produces an $(m+n)$-morphism.
	\end{lemma}
	
	Lemma \ref{lem:fin_compose} is proved in Section \ref{sec:app:subshift} in the Appendix. The proof of Lemma \ref{lem:fin_compose} explicitly provides the generator for the morphism, whose law of generation is given in \eqref{eqn:def:induced:1}. This statement can be expanded as
	
	\begin{lemma} \label{lem:def:induced:2}
		Given a $m$-morphism $\phi : S\to S'$ and an $l$-morphism $\phi' : S'\to S''$, $\phi$, one has
		\begin{equation} \label{eqn:def:induced:2}
			(\phi' \circ \phi )^{(m)} = \phi'^{(m)} \circ \phi ^{(m+l)} .
		\end{equation}
	\end{lemma} 
	
	Lemma \ref{lem:def:induced:2} is a direct consequence of Lemma \ref{lem_steps}. Lemma \ref{lem_steps} lays down the composition rule for the finite versions of subshift morphisms. 
	
	\begin{example}
		Again consider the case when $\Context=\SetCat$. Given a morphism $\phi$, let $\hat{\phi} = \pi_0 \circ \phi$ denote the projection to its first coordinate. Thus, $\phi$ is an $n$-morphism if $\hat{\phi}$ is constant on all $n$-cylinders. A morphism must commute with the shift map. Thus we must have
		\[\pi_k \circ \phi = \pi_0 \circ \sigma^k \circ \phi = \pi_0 \circ \phi \circ \sigma^k = \hat{\phi} \circ \sigma^k ,\]
		for every index $k\in \num_0$. Thus finitely defined morphisms between shift spaces are completely defined by their first coordinate-projections. Given any $n$-morphism $\phi : S \to S'$, one thus has an induced map
		\begin{equation} \label{eqn:def:induced:1}
			\phi^{(m)}_* : \word_{m+n}(S) \to \word_{m}(S'), \quad \paran{s_0, \ldots, s_{m+n}} \mapsto \paran{s'_0, \ldots, s'_{m}}, \; s'_j := \hat{\phi} \paran{s_{j}, \ldots, s_{j+n}} ,
		\end{equation}
		for every index $m\in \num$. This map $\hat{\phi}$ will be called the \emph{generator} for the morphism $\phi$.
	\end{example}
	
	\begin{example}
		Consider the case when $\Context = \SetCat$. If $\phi: S \to T$ is an $n$-morphism and $\psi:T \to Y$ is an $m$-morphism, then $\psi \phi: S \to Y$ is an $m+n$-morphism. We can check that the generator of the morphism $\psi \phi$ is   
		\[ \paran{ s_0 , \ldots, s_{m+n} } \,\mapsto\,  \hat{\psi}\paran{ \hat{\phi} \paran{ s_0, \ldots , s_n } , \ldots,\hat{\phi} \paran{ s_{m}, \ldots , s_{m+n} } } . \]
	\end{example}
	
	Lemma \ref{lem:fin_compose} leads to the following discovery of categorical structure.
	
	\begin{theorem} \label{thm:gfkf0}
		Finite morphisms define a subcategory $\Seq_{<\infty}$ of $\Seq$. Similarly, 0-morphisms define a subcategory $\Seq_0$ of $\Seq$.
	\end{theorem}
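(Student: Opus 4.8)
The plan is to verify the two defining axioms of a (wide) subcategory for each of the two morphism collections: that the relevant identity morphisms are included, and that the collection is closed under composition. Associativity and the unit laws are then inherited automatically from the ambient category $\Seq$, since composition in both proposed subcategories is by definition the very same operation as in $\Seq$, merely restricted to a smaller class of morphisms.

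First I would fix the object class. Both $\Seq_{<\infty}$ and $\Seq_0$ should retain all objects of $\Seq$, so each is a \emph{wide} subcategory; only the morphism classes are restricted. For the identity axiom, I would invoke the Example establishing that $1_S : S \to S$ is a $0$-morphism. Consequently $1_S$ lies in $\Seq_0(S,S)$, and since every $0$-morphism is in particular a finite morphism (indeed an $n$-morphism for all $n \geq 0$), $1_S$ also lies in $\Seq_{<\infty}(S,S)$. This settles the presence of identities in both candidate subcategories simultaneously.

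Next I would address closure under composition, which is where the real content sits -- and this is supplied entirely by Lemma \ref{lem:fdfp0}. For $\Seq_{<\infty}$: if $\phi : S \to T$ is an $n$-morphism and $\psi : T \to U$ is an $m$-morphism, then Lemma \ref{lem:fdfp0} gives that $\psi \phi$ is an $(m+n)$-morphism, hence again a finite morphism. For $\Seq_0$: specializing Lemma \ref{lem:fdfp0} to the case $m = n = 0$ shows that the composite of two $0$-morphisms is a $(0+0)$-morphism, i.e.\ again a $0$-morphism. Thus both morphism classes are closed under the composition inherited from $\Seq$.

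Since there is no genuine obstacle once Lemma \ref{lem:fdfp0} is in hand, the only point demanding care is the bookkeeping: checking that the inherited associativity and unit laws indeed restrict to the sub-collections, which is immediate because these identities hold among \emph{all} morphisms of $\Seq$ and a fortiori among any subclass, and that the composites never leave the chosen class -- precisely the content just verified. I would close by recording the resulting chain of nested wide subcategories $\Seq_0 \subseteq \Seq_{<\infty} \subseteq \Seq$, which is exactly the assertion of the theorem.
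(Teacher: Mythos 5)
Your proposal is correct and follows exactly the route the paper intends: the theorem is presented as an immediate consequence of Lemma \ref{lem:fdfp0} (closure of $n$-morphisms under composition, with indices adding) together with the Example showing $1_S$ is a $0$-morphism, and the remaining axioms are inherited from $\Seq$. Your additional bookkeeping about wide subcategories and the nesting $\Seq_0 \subseteq \Seq_{<\infty} \subseteq \Seq$ is consistent with, and slightly more explicit than, what the paper leaves implicit.
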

	
	We have thus progressed to another node $\Seq$ from Diagram \eqref{eqn:outline:1}. The objects here are quite distant from the $\DSMO$ objects from where they may originate. A data-driven algorithm only has available, or finite sets or sequences of data. We later encapsulate their properties in a category labeled $\TSD$. There is no pre-existing information of an explicit dynamics within the data, or of a topological or measure space. Objects in $\Seq$ are midway between the objects in $\DSMO$ and $\TSD$. A $\Seq$ object is collection of infinitely long records of measurements on the original space. At the same time, they also have an in-built dynamics due to the shift map.
	\section{Measurement along orbits} \label{sec:msrmnt} 
	
	We now come to the realization that shift spaces arise naturally from dynamical systems with measurements.  In the previous section we arrived at a purely categorical understanding of Cartesian products $\mathbb{A}^{\num}$. Such products naturally arise when a dynamical system is being observed. An observation $\phi : \Omega \to \iota Y$  involves an object $Y$ from $\Context_{obs}$. The sequences of observations at intervals of $\Delta t$ naturally lead to sequences in $(\iota Y)^{\num}$. We begin by studying some fundamental connections of this phase with the original domain.
	
	Our first observation is that For each object $(\Omega, \Phi, \phi)$ in $ \DSM$, we have a morphism $\xi_{(\Omega, \Phi, \phi)}:\Omega \to (\iota Y)^{\num}$ defined by the commutations
	\[
	\begin{tikzcd}
		\Omega \arrow[rr, dotted, "\xi", Shobuj] \arrow[rd, "\phi \circ \Phi^n"'] & & (\iota Y)^{\num} \arrow[ld, "\pi_n"] \\
		& \iota Y & 
	\end{tikzcd} , \quad 
	\forall n\in \num_0.
	\]
	This morphism $\xi$ may be denoted as $\xi_{(\Omega, \Phi, \phi)}$ to indicate the dependence on its factors. We next take two copies of the diagram above to get the diagram on the left
	\begin{equation} \label{eqn:def:Obs:DSM}
		\begin{tikzcd}
			& & \calY^{\num} \arrow[ld, "\pi^{(\num)}_n"'] \\
			\Omega \arrow[r, "\phi \circ \Phi^n"] \arrow[rru, "\xi", dotted, bend left] & \calY & \\
			\Omega \arrow[u, "\Phi"] \arrow[r, "\phi \circ \Phi^n"'] \arrow[rrd, "\xi"', dotted, bend right] & \calY & \\
			& & \calY^{\num} \arrow[lu, "\pi^{(\num)}_n"] \arrow[uuu, "\sigma"]
		\end{tikzcd} \imply
		\begin{tikzcd}
			\Omega \arrow[r, two heads] & \mathrm{im} \xi \arrow[r, hook] & \calY^{\num} \\
			\Omega \arrow[u, "\Phi"] \arrow[r, two heads] & \mathrm{im} \xi \arrow[r, hook] \arrow[u, "\sigma", dotted] & \calY^{\num} \arrow[u, "\sigma"']
		\end{tikzcd} \imply 
		\begin{tikzcd} \DSM \arrow[d, Shobuj, "\Obs"] \\ \Seq \end{tikzcd}
	\end{equation}
	The left-diagram shows how the dynamics $\Phi$, the measurements $\phi\circ \Phi^n$ along the orbits, and shift on the symbol space $\calY^{\num}$ and the projection maps commute.  Lemma \ref{lem:U_set:2} may be applied to the peripheral commuting square of this diagram to get the commutative diagram in the middle. Thus induced endomorphism on the image of $\xi$ has also been denoted as $\sigma$. Thus we get a dynamical system which is subobject or sub-system of the full shift. This can be summarized as the existence of a functor $\Obs$ as shown on the right above. One can perform the same steps with $\DSMO$. Recalling the notation from \eqref{eqn:def:ForwInv:2} we can draw :
	\begin{equation} \label{eqn:def:Obs:DSMO}
		\begin{tikzcd} [scale cd = 0.4]
			& & & \calY^{\num} \arrow[ld, "\pi^{(\num)}_n"'] \\
			\ForwInv (A; \Phi) \arrow[r, hook] \arrow[urrr, bend left, dotted, "\xi"] & \Omega \arrow[r, "\phi \circ \Phi^n"] & \calY & \\
			\ForwInv (A; \Phi) \arrow[r, hook] \arrow[u, "\Phi"] \arrow[rrrd, "\xi"', dotted, bend right] & \Omega \arrow[u, "\Phi"] \arrow[r, "\phi \circ \Phi^n"'] & \calY & \\
			& & & \calY^{\num} \arrow[lu, "\pi^{(\num)}_n"] \arrow[uuu, "\sigma"]
		\end{tikzcd} \imply
		\begin{tikzcd} [scale cd = 0.4]
			\ForwInv (A; \Phi) \arrow[r, two heads] & \mathrm{im} \xi \arrow[r, hook] & \calY^{\num} \\
			\ForwInv (A; \Phi) \arrow[u, "\Phi"] \arrow[r, two heads] & \mathrm{im} \xi \arrow[r, hook] \arrow[u, "\sigma", dotted] & \calY^{\num} \arrow[u, "\sigma"']
		\end{tikzcd} \imply 
		\begin{tikzcd} [scale cd = 0.6] \DSMO \arrow[d, Shobuj, "\Obs"] \\ \Seq \end{tikzcd}
	\end{equation}
	We now formally state the functor property of these correspondences $\Obs$ :
	
	\begin{theorem} [Observation functor] \label{thm:Obs_functor}
		Suppose Assumptions \ref{A:concrete} and \ref{A:obs} hold. Then the correspondences shown in \eqref{eqn:def:Obs:DSM} and \eqref{eqn:def:Obs:DSMO} are functorial.
	\end{theorem}
	
	Theorem \ref{thm:Obs_functor} is proved in Section \ref{sec:app:Seq_mes} in the Appendix, with the help of Lemma \ref{lem:U_set:1}. The functor $\Obs$ thus describes a systematic way in which measurements along orbits is converted into sequences, and how semiconjugacy relations are also preserved as subshift maps. In fact, the morphisms in $\DSMO^{\num}$ are converted into $0$-morphisms in $\Seq$. The objects in $\Seq$ are a record of all possible sequences of measurements one can obtain from the $\DSMO$ object.
	
	\begin{example}
		When $\Context = \SetCat$, the action of the functor $\Obs : \DSMO_{\num, \text{Set}} \to \Seq_{<\infty}$ can be illustrated as :
		\[  
		\begin{tikzcd} X \arrow[drr, "\subset"', shift right = 4pt] && \\ \Omega \arrow{drr}[swap, bend right = 20]{\phi} &&  \Omega \arrow{ll}{\Phi^{n } } \\ && Y \end{tikzcd}
		\begin{tikzcd} {} \arrow[rr, mapsto] && {} \end{tikzcd}
		\begin{array}{c}
			\left\{ \braces{ \phi \paran{ \Phi^{ (n + n_0) } x } }_{n=0}^{\infty} \right.: 
			\left. x \in X, \; n_0 \in \num \right\}
		\end{array} .
		\]
		A typical morphism as in \eqref{eqn:def:DSM_morphism} induces the following $\Seq$-morphism : 
		\[ \braces{ \phi \paran{ \Phi^{n+n_0}(x) } }_{n=0}^{\infty} \mapsto \braces{ A\circ \phi \paran{ \Phi^{n+n_0}(x) } }_{n=0}^{\infty} = \braces{ \phi' \paran{ \Phi'^{n+n_0}(h\paran{x}) } }_{n=0}^{\infty} .\]
	\end{example}
	
	This completes the description of yet another link in Diagram \eqref{eqn:outline:1}. 
	
	Since $\Seq$ is a subcategory of $\DS^{\num}$ there is an obvious inclusion functor :
	\[ \PathF : \Seq
	\to \Functor{\num_0}{\SetCat} = \DS^{\num_0} .  \]
	which assigns to any subshift $S\subset \mathbb{A}^{\num}$ the dynamical system $(S, \sigma)$. We have talked about measurements over an entire orbit or trajectory. We have represented an orbit by its initial state, thereby providing an indirect characterization of orbits. We now have the ingredients for a direct description of the notion of an orbit. Intuitively, an orbit of a point is the smallest dynamical system that contains all the forward iterates of that point. This is formalized by the \emph{orbit} functor $\Orbit$ shown below as a dashed arrow : 
	\begin{equation} \label{eqn:def:Orbit:1}
		\begin{tikzcd} [row sep = large, scale cd = 1]
			\DSMO\arrow[dashed, d, "\Orbit_{\Delta t}"', Shobuj] \arrow[rr, "\Discretize_{\Delta t}"] &&\DSMO^{\num} \arrow[d, "\Obs"]\\
			\DS^{\num}
			&&\Seq \arrow[ll, "\PathF"]  
		\end{tikzcd}
	\end{equation}
	With all the concepts in place, we can proceed to define the final category of objects of Diagram \eqref{eqn:outline:1} - time series data. The concept of sampling and subshifts will play an important role.
	\section{Time series data} \label{sec:tsd}
	
	Data-driven reconstruction algorithms abound in various fields of dynamical systems. The overall goal is to create a mathematical object from some collection $\calC$ of data. To compare different techniques or to evaluate a particular technique, one needs a metric or means of comparison between the members of $\calC$. In our case the collection is $\DS$ and comparison is provided by its categorical structure. The other ingredient needed is way to model data itself as a mathematical object. While the nature of data can be very diverse, there are some basic principles to keep in mind when defining data as a mathematical object.  
	\begin{enumerate} [(i)]
		\item All algorithms on a finite memory machine must receive finite datasets as inputs. 
		\item Different algorithms may receive different data-sets and thus the comparison should not be based on a particular choice of dataset.
		\item The performance of an algorithm should be expected to improve with an increase in dataset. Thus an algorithm should be evaluated by its limiting results as the dataset increases in information.
		\item Thus the definition of a dataset must allow a rigorous notion of increase in information or content.
		\item The definition of a dataset must be able to incorporate the increasing information content in time series.
	\end{enumerate}
	
	We begin by defining finitely sized data set objects : 
	
	\begin{definition} [Category of finite sequences]
		This is the category $\FinSeq$ in which objects are triples $(\mathbb{A}, n, X)$ with $\mathbb{A}$ being a $\calC$-object, $n$ an integer, and $X$ a sub-object of $\mathbb{A}^n$. A morphism between two such objects $(\mathbb{A}, n, X)$ and $(\mathbb{A}', n', X')$ is an usual $\calC$-morphism $X\to X'$.
	\end{definition}
	
	Any such subobject $X \hookrightarrow \mathbb{A}^n$ will be denoted as a triple $(n,\mathbb{A},X)$ or more simply just as $X$. The integer $n$ will be called the \emph{length} of the finite sequence, and is represented as $\len X$. The set $\mathbb{A}$ is interpreted as the \emph{alphabet} of $X$, and is represented as $|X|$, and the integer $n$. A $\FinSeq$-object represents a chunk of data, one of the simplest units of information. Any numerical method operates on one such data chunk at a time. An infinite sequence of these chunks create a time-series object.
	
	\begin{definition} [Start and finish morphisms]
		Let $\Context$ be as in Assumption \ref{A:concrete} and $\mathbb{A}$ be an object in $\Context$. For each $n$, there are morphisms $\start_n:\mathbb{A}^n \to \mathbb{A}^{n-1}$ and $\finish_{n}:\mathbb{A}^n \to \mathbb{A}^{n-1}$ defined by the following diagrams:
		\[
		\begin{tikzcd}
			\mathbb{A}^{n} \arrow[rr, "\start_n", dotted] \arrow[rd, "\pi_{i+1}^{(n)}"'] & & \mathbb{A}^{n-1} \arrow[ld, "\pi_i^{(n-1)}"] \\
			& \mathbb{A}  
		\end{tikzcd} , \;
		\begin{tikzcd}
			\mathbb{A}^n \arrow[rd, "\pi^{(n)}_i"'] \arrow[rr, "\finish_{n}", dotted] & & \mathbb{A}^{n-1} \arrow[ld, "\pi^{(n-1)}_i"] \\
			& \mathbb{A}
		\end{tikzcd} ; \quad
		\forall 1\leq i\leq n-1
		\]
		These morphisms $\start_n$ and $\finish_n$ will be called the \emph{start} and \emph{finish} morphisms respectively.
	\end{definition}
	
	The symbols $\start$ and $\finish$ respectively represent the act of deleting the starting symbol, and final symbol. The simple ideas of a start and finish morphism are the basis of the notion of data. 
	
	\begin{definition} [Time series data object]
		Let $\Context$ be as in Assumption \ref{A:concrete}. A $\Context$-valued time series data comprises of (i) an alphabet $\mathbb{A} \in\Context$; (ii) a sequence $\{X_n\}_{n=0}^\infty$ of $\Context$-objects; (iii) and morphisms $\{\start_n, \finish_{n}:X_{n+1} \to X_n\}_n$ such that
		\begin{enumerate} [(i)]
			\item $X_n \hookrightarrow \mathbb{A}^{n+1}$ : $X_n$ is a finite sequence with alphabet $\mathbb{A}$ and length $n+1$.
			\item The following commutations hold
			\[
			\begin{tikzcd}
				X_n \arrow[rr, "{\start_n, \finish_{n}}"] \arrow[d, hook] & & X_{n-1} \arrow[d, hook] \\
				\mathbb{A}^{n+1} \arrow[rr, "{\start_{n+1},\finish_{n+1}}"] & & \mathbb{A}^{n} 
			\end{tikzcd}
			\]
		\end{enumerate}
	\end{definition} 
	
	Note the repeat of notations in the two horizontal rows. They convey the intuition that the morphisms $\start_i, \finish_i: X_i \to X_{i-1}$ which are a restriction of start and finish morphisms on the full symbolic space $\mathbb{A}^{n+1}$. Thus intuitively, a time series data consists of a sequence $X = \{X_i\}_i$ ($i = 0,1,2, \cdots$) of objects in $\FinSeq$ with a common alphabet $\mathbb{A}$ and a collection of start and finish morphisms representing data deletion.
	
	\begin{example}
		When $\Context = \SetCat$, then
		\[ \start_i (x_0 x_1\cdots x_i) = x_1\cdots x_i, \quad \finish_i (x_0 x_1\cdots x_i) = x_0 x_1\cdots x_{i-1} . \]
	\end{example}
	
	We represent a time series data graphically as 
	\begin{equation} \label{eqn:def:TSD}
		\begin{diagram}
			\node{X_0}  \node{X_1}\arrow{w,t}{\start_1}\arrow{w,b}{\finish_1}\node{X_2}\arrow{w,t}{\start_2}\arrow{w,b}{\finish_2}\node{X_3}\arrow{w,t}{\start_3}\arrow{w,b}{\finish_3} \node{\cdots}\arrow{w,t}{\start_4}\arrow{w,b}{\finish_4}
		\end{diagram}
	\end{equation}
	We allow a time series data to be finite length. That is, we may have $X_k = \emptyset$ for all $k$ greater than some integer $N$. Such time series data will be called \emph{finite}. For convenience, we define $\start_0 = \finish_0: X_0 \to X_{-1} =\{\varepsilon\}$ where $\varepsilon$ denotes the null sequence. A time series object is thus a sequence of data-chunks of increasing length. The portion of the $(n+1)$-length chunks obtained by deleting the beginning and ending symbols, must be within the $n$-length chunks.
	
	To define the time series data as objects of a category we need to define morphisms. 
	
	\begin{definition} [Category of time series data]
		Let $\Context$ be as in Assumption \ref{A:concrete}. Then time series data objects assemble to form a category $\TSD_{\Context}$ in which a morphism between two time series data $X$ and $Y$ is an integer $k$ along with a sequence of maps $\psi_n : X_{n+k} \to Y_{n}$ for every $n\in\num$ such that the following commutations hold : 
		\begin{equation} \label{eqn:TSD_commut}
			\begin{tikzcd}
				\akashi{ X_{n+1+k} } \arrow{rr}{ \psi_{n+1} } \arrow[Akashi]{d}{ \finish_{n+k+1} }[swap]{ \start_{n+k+1} } && \Holud{ Y_{n+1} } \arrow[Holud]{d}{ \finish_{n+1} }[swap]{ \start_{n+1} } \\ 
				\akashi{ X_{n+k} } \arrow{rr}[swap]{ \psi_{n} } && \Holud{ Y_{n} }
			\end{tikzcd} , \quad
			\forall n\in\num .
		\end{equation}
		The composition is defined in an obvious manner.
	\end{definition}
	
	As usual, if the context is clear, then $\TSD_{\Context}$ will denoted more briefly as $\TSD$. The vertical columns in \eqref{eqn:TSD_commut} represent two separate objects in $\TSD$, while the horizontal arrows represent the $n$ and $n+1$-th component of a morphism between them. This integer $k$ will be called the \emph{jump} of the morphism. The verification of compositionality is routine and is provided in Section \ref{sec:app:TSD} in the appendix.
	
	The category $\TSD_{\SetCat}$ so defined turns out to be a special case of a functor category used to capture the notion of increasing information content. Following \cite[Sec 7]{fritz2020Stoch}, we may interpret the sequence in \eqref{eqn:def:TSD} to be a sequence of probability spaces, and the pair of morphisms $\paran{\start_n, \finish_n}$ create a Markov transition function. Each point $w$ in $X_n$ is sent with equal probability to $\start_n(w)$ or $\finish_n(w)$. The sequence represents an increasing information content, and the Markov transition function represents a lossy projection map. In this way, $\TSD_\SetCat$ becomes a sub-category of the category of functors from $\SqBrack{ \num_0, \downarrow }$ into $\StochCat$, the category of stochastic processes \cite[e.g.]{MossPerrone2022ergdc}. Such sequences are the object of study in Lauritzen’s notion of transitivity of a sequence of statistics \cite[Def 2.1]{lauritzen2012extremal}.Thus each object in $\TSD_\SetCat$ is a sequence of increasing information content, and the transition from the $n+1$-th piece to the $n$-th piece is a Markov transition in which either the first or last bit of a sequence is lost.
	
	We next make some observations about the generation of morphisms in $\TSD$.
	
	\begin{lemma} \label{lem:factor}
		Let $X$ and $Y$ be time series data, and \eqref{eqn:TSD_commut} holds for some $n\in \num_0$. Then $\psi_{n+1}$ is completely determined by $\psi_n$.
	\end{lemma}
	
	Lemma \ref{lem:factor} is proved in Section \ref{sec:app:TSD} of the appendix.
	
	\begin{example}
		If $\Context=\SetCat$ then Lemma \ref{lem:factor} can be expressed by the identity
		\[ \phi_{n+1}(\xi_0 \xi_1 \cdots \xi_{n+k+1}) = \phi_{n}(\xi_0 \xi_1 \cdots \xi_{n+k})_0 \phi_{n}(\xi_1 \cdots \xi_{n+k+1}), \quad \forall \xi_0 \xi_1 \cdots \xi_{n+k+1} \in X_{n+k+1}. \]
	\end{example}
	
	According to Lemma \ref{lem:factor} the morphisms in $\TSD$ of jump 0 are in one-to-one correspondence with the set maps which generate them. For this reason, we shall denote each $0$-morphism in $\TSD$ by $X \xrightarrow[]{k} Y$ if its generating map is $k: |X| \to |Y|$. This completes the description of the rightmost node of the diagram in \eqref{eqn:outline:1}. 
	
	\paragraph{Time series from subshifts} We have seen how objects in $\Seq$ arise naturally from dynamical systems with measurements. We shall now see how $\TSD$-objects are naturally associated with $\Seq$-objects. Consider the commutative diagram in $\Context$ shown below 
	\[\begin{tikzcd}
		S \arrow[rrr, "\sigma|S"] \arrow[d, hook] &&& S \arrow[d, hook] \\
		\mathbb{A}^{\num} \arrow[d, "\pi_{\num\to n+1}"'] \arrow[bend right=20]{rrr}[pos=0.2, swap]{\sigma} &&& \mathbb{A}^{\num} \arrow[d, "\pi_{\num\to n}"]\\
		\mathbb{A}^{n+1} \arrow{rrr}{\start_{n+1}}[swap]{\finish_{n+1}} &&& \mathbb{A}^{n}
	\end{tikzcd} \]
	This leads to
	\begin{equation} \label{eqn:did93k}
		\begin{tikzcd}
			S \arrow[dr, two heads] \arrow[rrr, "\sigma|S"] \arrow[d, hook] &&& S \arrow[d, hook] \arrow[dl, two heads] \\
			\mathbb{A}^{\num} \arrow[d, "\pi_{\num\to n+1}"'] \arrow[,bend right=20]{rrr}[pos=0.2, swap]{\sigma} & \word_n(S) \arrow[dl, bend right=20, hook] \arrow[Shobuj]{r}{\start_{n}}[swap]{\finish_{n}} & \word_{n-1}(S) \arrow[dr, bend left=20, hook] & \mathbb{A}^{\num} \arrow[d, "\pi_{\num\to n}"]\\
			\mathbb{A}^{n+1} \arrow{rrr}{\start_{n+1}}[swap]{\finish_{n+1}} &&& \mathbb{A}^{n}
		\end{tikzcd}
	\end{equation}
	Lemma \ref{lem:U_set:2} can be applied to this diagram to get the diagram on the right. The $n$-th word associated to a subshift is thus the image of its projection to the first $n+1$ components. This can be formalized as
	
	\begin{theorem} \label{thm:TSD_from_Seq}
		There is a functor $\word: \Seq_{\Context,<\infty} \to \TSD$ set up by $\word(S)_n := \word_n(S)$ as described in \eqref{eqn:did93k}.
	\end{theorem}
	
	We have already established the action of this functor on objects. The verification of compositionality is done in Section \ref{sec:app:TSD}.
	
	\begin{example}
		When $\Context=\SetCat$ then 
		\[ \word_n : \Seq \to \SetCat, \quad S \mapsto \braces{ \mbox{ all } n-\mbox{strings in } S } . \]
	\end{example}
	
	We have thus described another node and segment of the outline diagram \eqref{eqn:outline:1}. The $\word$-functor \eqref{eqn:did93k}, discretization functor $\Discretize$ \eqref{eqn:def:Dscrt:2}, and observation functor $\Obs$ (Theorem \ref{thm:Obs_functor}) together compose into the \emph{data} functor 
	\begin{equation} \label{eqn:def:data_fnctr}
		\begin{tikzcd}
			\DSMO \arrow[dashed, bend right=10]{drrrr}[swap, pos=0.33]{ \Data_{\Delta t} } \arrow[rr, "\Discretize_{\Delta t}", Shobuj] && \DSMO_{\num, \text{Set}} \arrow{rr}{ \Obs } && \Seq \arrow{d}{ \word } \\
			&& && \TSD
		\end{tikzcd}
	\end{equation}
	The diagram \eqref{eqn:def:data_fnctr} breaks down the process by which an observed dynamical system generates data. The data is in the form of time series objects, which are obtained as words or strings from shift spaces created from the dynamics. Observed dynamics are arranged by commuting relations as shown in \eqref{eqn:def:DSM_morphism}. These relations are transformed into semi-conjugacy relations in $\Seq$, which in turn are converted into the commutations of the form \eqref{eqn:TSD_commut}. As usual $\Delta t$ may be dropped from the subscript if it is clear from context. 
	
	We now present several instances of $\DSMO$ objects to illustrate our results. In all these examples, time $\Time$ is $\num_0$ by default. 
	
	\begin{example} [Circle rotation 1] \label{ex:S1:1}
		Suppose $\Omega = S^1$ and the dynamics is
		\[\Phi : S^1 \to S^1, \quad \theta \mapsto \theta + \rho \bmod 2\pi\]
		Suppose that the system is being observed along a single trajectory starting from $A = \braces{(0,0)}$. Next suppose that the observation map is
		\[\phi : S^1 \to \real, \quad \theta \mapsto \cos(\theta). \]
		Then the resulting time series data $X$ will be such that each $X_N$ comprises of sequences of length $N+1$ of the form $\braces{ \cos( (n_0 + n) \rho \Delta t }_{n=0}^{N}$ for various $n_0$. Any trigonometric reconstruction scheme would produce an exact reconstruction.
	\end{example}
	
	\begin{example} [Circle rotation 2] \label{ex:S1:2}
		Consider the same setup as Example \ref{ex:S1:1} but with $A$ instead being the entirety of $S^1$. Then the resulting time series data $X$ will be such that each $X_N$ comprises of sequences of length $N+1$ which are linear combinations of trigonometric functions whose frequencies are multiples of $\rho \Delta t$. Any trigonometric reconstruction scheme would produce an exact reconstruction.
	\end{example}
	
	\begin{example} [Quasiperiodic torus rotation 1] \label{ex:T2:1}
		Suppose $\Omega = \mathbb{T}^2$, the 2-dimensional torus, and the dynamics is
		\[ \Phi : \mathbb{T}^2 \to \mathbb{T}^2, \quad \theta \mapsto \theta + \rho \bmod 2\pi . \]
		Here $\theta$ is a 2-dimensional angular coordinate, and $\rho$ is a 2-dimensional rotation vector. If the coordinates of $\rho$ are rationally independent, then the motion is known as \emph{quasiperiodic}. Suppose that the set of initial conditions $A$ is the entirety of $\mathbb{T}^2$ and $\phi$ is an embedding of  then the resulting time series data $X$ will be such that each $X_N$ comprises of sequences of length $N+1$ which are linear combinations of trigonometric functions whose frequencies are of the form $\SetDef{ a_1 \rho_1 \Delta t + a_2 \rho_2 \Delta t }{ a_1, a_2\in \integer }$. Again, any trigonometric reconstruction scheme would produce an exact reconstruction.
	\end{example}
	
	\begin{example} [Quasiperiodic torus rotation 2] \label{ex:T2:2}
		Consider the variant of Example \ref{ex:T2:1} above in which the rotation vector $\rho$ is such that there is an $a\in \integer^2$ such that $a\cdot \rho \in \integer$. Then there is another $b\in \integer^2$ such that the resulting time series data $X$ will be such that each $X_N$ comprises of sequences of length $N+1$ which are linear combinations of trigonometric functions whose frequencies multiples of $(b\cdot \rho) \Delta t$. Thus the time series will only bear the signature of a single generating frequency. No reconstruction scheme would be able to produce an exact reconstruction. 
	\end{example}
	
	\begin{example} [Quasiperiodic torus rotation 3] \label{ex:T2:3}
		Consider the variant of Example \ref{ex:T2:1} above in which the observation map is
		\[ \phi : \mathbb{T}^2 \to \real, \quad \theta \mapsto \cos (b\cdot \theta) , \]
		for some $b\in \integer^2$. Then the resulting time series data $X$ will be such that each $X_N$ comprises of sequences of length $N+1$ which are linear combinations of trigonometric functions whose frequencies multiples of $(b\cdot \rho) \Delta t$. Again, no reconstruction scheme would be able to produce an exact reconstruction. 
	\end{example}
	
	An important point to note is that the $\DSMO$ objects from Examples \ref{ex:S1:1}, \ref{ex:T2:2} and \ref{ex:T2:3} produce the same time series if the parameters are chosen correctly. This is one of the simplest examples of the challenge or impossibility of reconstructing a $\DS$-object from a $\TSD$-object which is sourced not from $\DS$ but from $\DSMO$.
	
	We have thus established the entire path in Diagram \eqref{eqn:outline:1} that leads to a generation of data from a dynamical system being measured along orbits. This sets the stage to consider the task of reconstruction. 
	\section{Reconstruction} \label{sec:recon} 
	
	\begin{table} [!t]
		\caption{Tertiary set of components of the framework. The table summarizes the various categories and functors derived from the categories and functors in Table \ref{tab:param2}, and ultimately depending on \ref{tab:param1}. These categories and functors are the key to make conclusions from the arrangement in diagram \eqref{eqn:outline:1}. }
		\begin{tabularx}{\linewidth}{|l|L|}        \hline
			Variable & Interpretation \\ \hline  
			$\DSM$ & Category of dynamical systems along with measurement maps into objects from the observable category $\Context_{obs}$ \\ \hline
			$\DSMO$ & Category of dynamical systems along with measurement maps and an orbit \\ \hline 
			$\Data$ & functor $\DSMO \to \TSD$ resulting from the composition of $\word$ with $\Obs$. Represents how an observation system leads to the creation of a time series object. \\ \hline  
			$\bar{\Recon}$ & functor $\DSMO \to \DS$ resulting from the composition of $\Recon$ with $\Data$. Represents how an observation system leads to a reconstructed dynamical system, depending on the choice of reconstruction scheme $\Recon$. \\ \hline  
		\end{tabularx}
		\label{tab:param3}
	\end{table}
	
	Our study has two parts. The first is about the separate notions of dynamical systems, observations and orbits, and how they combine to provide data. The notion of "data" is formalized as objects of a category called time series-data $\TSD$. One of our main conclusions has been that all these arrangements including $\TSD$ are related functorially. Data-driven reconstruction algorithms operate on time series to create dynamical systems, in an effort to estimate the source of the data. The second part of our investigation explores the consequences of these algorithms being functorial. Before delving into the categorical math we must clarify the assumption on functoriality.
	
	Any algorithm is ultimately a function $\calA$, that accepts instances of certain data-types and provides an output of some other data-type. The output also depends upon some internal parameters. Some of the parameters represent design choices, while some represent unavoidable limitations of computer based calculations. For example all learning algorithms have a choice of hypothesis space $\calH$ which is preferably a dense subset of the space in which the target function lies. During any computation one can only employ a finite subspace of $\calH$. This subspace may be parameterized in different ways, such as the length and breadth of a neural network, the number of basis or frame elements in a regression technique. In either case, there is a number $L$ which represents the size of the actual search space. There are also other parameters involved, such as a \emph{machine precision} limit $\epsilon_{mach}$ which represents the resolution of computer arithmetic. All these intrinsic parameters related to performance can be bundled into a variable $\lambda$ drawn from a space $\Lambda$. Based on the notation so far, an algorithm can be expressed as a map :
	\[\calA : \FinSeq \times \Lambda \to \DS .\]
	We shall assume that the parameter space $\Lambda$ has a point at infinity $\bar{\lambda}$, representing the limiting or terminal value of the parameter variable. The function 
	\[ \bar{\calA} := \calA( \cdot, \bar{\lambda} ) : \FinSeq \to \DS \]
	represents the idealized performance of the algorithm beyond all of its performance limiting factors. For example in a least-squares fit approach, given a dataset $X_N$ in $\real^d$, the following steps are are followed :
	\begin{enumerate} [(i)]
		\item Construct an augmented space $\real^D$ using techniques such as delay-coordinates or reservoir embedding. Without loss of generality, the measurement function can be interpreted directly as an injective map $\phi : \Omega \to \real^D$.
		\item The dataset $X_N$ is some collection of sequences of length $N+1$ of orbits in $\real^D$.
		\item Take a gradation $\calH_1 \subset \calH_2 \subset \ldots$ of the hypothesis space into a sequence of finite dimensional subspace.
		\item Fix a search size $L$.
		\item Find a minimum norm function $w_N : \real^D \to \real^D$ from $\calH_L$ such that the graph of $w_N$ has the least-squares fit with the length $N+1$ time series contained in $X_N$. As a result the output $\calA(X_N, L)$ will be $\paran{\real^D, w_N}$.
		\item The output $\bar{\calA}(X_N) = \paran{\real^D, \bar{w}_N}$ is such that the minimum norm function $\bar{w}_N : \real^D \to \real^D$ in $L^2(\real^D)$ such that the graph of $\Phi$ has the least-squares fit with the length $N+1$ time series contained in $X_N$.
	\end{enumerate}
	
	This is where the assumption of functoriality plays a natural role. Datasets can undergo various transformation such as scaling, rotations, and translations. Categorically, two datasets $X_N, X'_N$ of length $N$ may be related by a morphism in $\FinSeq$, $h : X_N \to X'_N$. We want the idealized algorithm $\bar{\calA}$ to be compatible with such data transformations. This means that if the reconstructions are respectively 
	\[ (\Omega, \Phi^t) = \bar{\calA}(X_N) , (\Omega', \Phi'^t) = \bar{\calA}(X'_N) ,\]
	then the two dynamics should be related by a semi-conjugacy too. An algorithm $\calA$ usually places a regularity bound such as $C^1$ norm or total variation norm on the candidates in its hypothesis space. Thus it might fail to preserve semi-conjugacy under some distortions of the data. On the other hand $\bar{\calA}$ does not place regularity bounds and does not lose information content under data transformations. 
	
	\begin{figure} [!t]
		\centering
		\begin{tikzpicture} \node[draw, inner sep=5pt, draw=ChhaiD, line width=2pt] (box){
				\begin{tikzcd} [scale cd = 0.6, column sep = tiny, row sep = tiny]
					& & & \blue{ \begin{array}{c} \mbox{Limiting}\\ \mbox{parameter } \\ \bar{ \lambda } \end{array} } \arrow[d, blue] & & & & \\
					& \itranga{ \begin{array}{c} \mbox{Parameter : } \\ \lambda \in \Lambda \end{array} } \arrow[d, Itranga] & & \blue{ \bar{\mathcal{A}} ( X_N ) } \arrow[rd, dashed, blue] & \begin{array}{c} \mbox{Input : } \\ X_N \end{array} \arrow[l] \arrow[rd] & & & \\
					\begin{array}{c} \mbox{Input : } \\ X_N \end{array} \arrow[r] \arrow[rd] & \itranga{ \mathcal{A} \left( X_N, \lambda \right)} \arrow[rru, "\lambda\to \bar{\lambda}", dotted] & & & \blue{ \bar{\mathcal{A}} ( X_{N+1} ) } \arrow[rd, dashed, blue] & \begin{array}{c} \mbox{Input : } \\ X_{N+1} \end{array} \arrow[l] \arrow[rd] & & \\
					& \begin{array}{c} \mbox{Input : } \\ X_{N+1} \end{array} \arrow[r] \arrow[rd] & \itranga{ \mathcal{A} \left( X_{N+1}, \lambda \right)} \arrow[rru, "\lambda\to \bar{\lambda}", dotted] & & & \cdots \arrow[rd, dashed, blue] & \cdots \arrow[rd] & \\
					& & \cdots \arrow[rd] & \itranga{\cdots} & & & \blue{ \bar{\mathcal{A}} ( X ) } & \begin{array}{c} \mbox{TSD} \\ X \end{array} \arrow[l] \\
					& & & \begin{array}{c} \mbox{TSD} \\ X \end{array} \arrow[rrru, dotted] & & & \Shobuj{ \Recon(X) } \arrow[u, "\cong", Shobuj] & & 
				\end{tikzcd} 
			}; \end{tikzpicture}
		\caption{Algorithms and their limiting behavior. A $\TSD$-object \eqref{eqn:def:TSD} $X$ is represented as a sequence of sets of $N$-sequences $X_N$. Each such dataset $X_N$ is mapped by the algorithm $\calA$ into an estimate of the dynamical system. This mapping depends on the state of the design and performance parameters $\lambda$. Their limiting behavior is indicated in blue by $\bar{A}$. Although $\calA$ is not functorial, $\bar{\mathcal{A}}$ is. In topological reconstructions, by pointwise convergence, the limit $\bar{A}(X)$ of the estimates $\bar{A}(X_N)$ have a functorial dependence on $X$. The reconstruction functor $\Recon(X)$ is precisely this limit. It does not represent the output of an algorithm but its idealized operation. }
		\label{fig:algo_limit}
	\end{figure}
	
	We next explain why the successive approximations $\braces{ \bar{\calA(X_N)} }_{N=1,2,\ldots}$ converge to a limit which we denote as $\bar{A}(X)$. This limit will be interpreted as the final, ideal reconstruction $\Recon(X)$. Recall from the definition of the $\TSD$ category that each $X_N$ is a collection of strings of an alphabet $\mathbb{A}$, all the strings being of length $N+1$. There is also a pair of deletion maps $\start_N, \finish_N : X_N \to X_{N-1}$. The implication is that $X_{N-1}$ contains every $N$-length substring that can be obtained by deleting the starting or ending symbol in an $N+1$-length string in $X_N$. The converse may not be true, i.e., there may be strings in $X_{N-1}$ not in the image of $\start_N, \finish_N$. Let $\IntHom{\num_0}{\mathbb{A}}$ denote the set of all possible infinite sequences from the alphabet $\mathbb{A}$. This is also called the \emph{path-space} of $\mathbb{A}$. One can associate a subset $\calS_N$ of $\IntHom{\num_0}{\mathbb{A}}$ corresponding to $X_N$ :
	\[ \calS_N := \SetDef{ \phi : \num_0 \to \mathbb{A} }{ \phi(0) \phi(1) \cdots \phi(N) \in X_N } . \]
	Our previous observation implies that these sets are decreasing : $\calS_1 \supseteq \calS_2 \supseteq \cdots$. Moreover, if $\mathbb{A}$ is a compact set, then these form a sequence of nested compact sets, whose intersection $\calS$ is therefore non-empty and compact. Thus any $\TSD$-object $X$ unambiguously determines a compact subset $\calS$ of the path-space  of $\mathbb{A}$.
	
	Therefore as $N$ is increased, the algorithm $\calA$ is fed increasing amount of information about this subset $\calS$. If $\calA$ is a consistent reconstruction method, then its approximations should converge to a final estimate which replicates the sequences in $\calS$. This final, limiting estimate is precisely what is indicated by $\bar{\calA}(X)$ or $\Recon(X)$. The correspondence $X \mapsto \Recon(X)$ is also functorial. For simplicity we assume that at each stage the domain of the reconstruction remains the augmented space $\real^{D}$. Suppose that there is a jump $k\in \num$ and a sequence of maps $h_N : X_N \to X_{N+k}$ which commutes with the start and finish morphisms. Our assumption on data-compatibility implies the following commutation for every $N$ :
	\[\begin{tikzcd}
		\real^{D} \arrow[d, "h_N"'] \arrow[r, "\Phi_N^t"] & \real^{D} \arrow[d, "h_N"] \\
		\real^{D'} \arrow[r, "\Phi_N'^t"] & \real^{D'} 
	\end{tikzcd} , \quad \forall t\in\Time. \]
	Any dynamical system is equivalent to a conjugate form involving an invertible change of variables. Thus without loss of generality we may assume that the two dynamics have two reference points $x_0$ and $x'_0$, and $h_N(x_0) = x'_0$ at every stage $N$. Then consistency of the algorithm implies that the $h_N$ converges pointwise on the closure of the orbit of $x_0$. Our observations have been summarized in Figure \ref{fig:algo_limit}. Thus we define 
	
	\begin{definition} [Reconstruction scheme]
		A data-driven reconstruction scheme for dynamical systems is a functor $\Recon : \TSD \to \DS$.
	\end{definition} 
	
	If a semi-conjugacy relation is invertible, it is called a \emph{conjugacy}. A dynamical system is only unique up to conjugacy. Conjugacies correspond to isomorphisms in the category $\DS$.  By Theorem \ref{thm:Obs_functor}, we have a functor $\word \circ \Obs$ (by an abuse of notation) from $\DSM$ to $\TSD$ constructed by assigning the set of all data generated by a specific measurement and a dynamical system. Therefore, two conjugate systems will yield datasets isomorphic in $\TSD$.  Like many constructs in category theory, the output of any reconstruction scheme should be considered along with all its conjugate forms. This definition of reconstruction completes the last link in the diagram \eqref{eqn:outline:1}. We have thus arrived at the following situation :
	\begin{equation} \label{eqn:what_recon}
		\begin{tikzcd}
			\DSM \arrow{d}{} && \DSMO \arrow[dashed, Shobuj]{dll}[swap]{\proj} \arrow[dotted, d, Shobuj] \arrow{ll}[swap]{} \arrow{rr}{ \Discretize } \arrow[dashed]{drr}{ \Data } && \DSMO_{\num, \text{Set}} \arrow{rr}{ \Obs } && \Seq \arrow{dll}{\word} \\
			\DS && \DS && \TSD \arrow[dotted, Itranga, "?", ll]
		\end{tikzcd}
	\end{equation}
	The left loop extracts the dynamical system from a combined observation + dynamics + orbit object. It can be interpreted as the "true" dynamical system. A data-consistent reconstruction algorithm is a functor shown as the dotted red arrow above, such that the two composite functors shown in dashed green arrows are comparable. The question that now arises is how can two paths be reconciled.
	
	\section{Consistency} \label{sec:consistency}
	
	The reconstruction $\Recon$ only takes measured data as inputs, since the phase space cannot be observed directly. However, if the data is generated from a full observation of the phase space $\Omega$, then one should ideally be able to fully reconstruct the system. For this purpose we define a subcategory $\DS_{obs}$ of $\DS$ in which the domains are images of the inclusion functor $\iota$ from Assumption \ref{A:obs}. There is an inclusion $\iota'_{obs} : \DS_{obs} \to \DSM$ , given by
	\[\begin{tikzcd} \iota\Omega \arrow["\Phi^t"', loop, distance=2em, in=125, out=55] \end{tikzcd}
	\quad\begin{tikzcd} {} \arrow[rr, mapsto] && {} \end{tikzcd} \quad
	\begin{tikzcd} \iota\Omega \arrow["\Phi^t"', loop, distance=2em, in=125, out=55] \arrow[rr, "\Id_{\iota\Omega}"] && \iota\Omega \end{tikzcd}
	\]
	There are also analogous subcategories $\DSO_{obs}$ of $\DSO$ and  $\DSMO_{obs}$ of $\DSMO$ in which the dynamics is on an observable domain. Thus each of the three categories $\DS$, $\DSM$, $\DSMO$ have their observable analogs, and these six categories are organized as follows :
	\begin{equation} \label{eqn:outline:2}
		\begin{tikzcd} [scale cd = 1, row sep = large]
			&& \DS_{obs} \arrow[lld, "="'] \arrow[d, "\iota''_{obs}"'] \arrow[rrd, "\iota'_{obs}"] \arrow[rrrrd, "\iota_{obs}", bend left=10] \arrow[rrrrrd, "\iota'''_{obs}", bend left=20] && && &\\
			\DS_{obs} \arrow[rr, "\iota''_{obs}"] && \DS && \DSM_{obs} \arrow[d, "\subset"] && \DSMO \arrow[dll, "\proj"] & \DSM \arrow[l, "\kappa"] \arrow[llld, pos=0.2, "=", bend left=10] \\
			&& && \DSM \arrow[ull, "\proj"'] \arrow[ullll, "\proj_{obs}", bend left = 10]
		\end{tikzcd}
	\end{equation}
	Note how the various inclusion functors $\iota_{obs}$, $\iota'_{obs}$, $\iota''_{obs}$, and projections $\proj$ and $\proj_{obs}$ combine with the inclusion functor $\kappa$ from \eqref{eqn:def:DSM_to_DSMO}. Some of the functors are left and right inverses, as encoded in the commutation loops involving equalities. Of special significance is the arrow $\iota_{obs} = \kappa \circ \iota'_{obs}: \DS_{obs} \to \DSMO$. This functor maps each $(\iota\Omega,\Phi)$ in $\DS_{obs}$ to $\paran{\iota\Omega, \Phi, \Id_{\iota \Omega}, \Id_{\iota \Omega}}$. We are now ready to delve into the various notions related to reconstruction, centered around the Diagram \eqref{eqn:what_recon}.
	
	\begin{definition} [Consistency]
		A reconstruction $\Recon$ as in Diagram \eqref{eqn:what_recon}  will be called \emph{observation-consistent} if there is a natural transformation $\Forget \circ \iota_{obs} \Rightarrow \Recon \circ \Data \circ \iota_{obs}$, as shown below
		\[\begin{tikzcd}
			\DSMO \arrow[dd, "\Forget"'] \arrow[rrrr, "\Data"] &&&&\TSD \arrow[dd, "\Recon"] \\
			&& \DS_{obs} \arrow[ull, "\iota_{obs}"] \arrow[dashed]{dll}[name = n1, swap]{\iota''_{obs}} \arrow[dashed]{drr}[name = n2]{}  \\
			\DS &&&& \DS
			\arrow[shorten <=1pt, shorten >=1pt, Rightarrow, to path={(n1) to[out=-45,in=225] (n2)} ]{  }
		\end{tikzcd}\]
	\end{definition}
	
	Thus consistency implies that the construction functor applied to a data-object $X$ arising from a fully observed, observable dynamical system would produce a new data-object $X'$ which may not be isomorphic to $X$ but is derivable from $X$. One can define more generally :
	
	\begin{definition} [Parameterized Consistency]
		Suppose there is a functor $F: \calP \to \DSMO$, interpretable as a parameterized sub-family of dynamical systems being measured along orbits. A reconstruction $\Recon$ as in Diagram \eqref{eqn:what_recon}  will be called \emph{consistent} with respect to $F$, or simple $F$-consistent, if there is a natural transformation $\Forget \circ F \Rightarrow \Recon \circ \Data \circ F$, as shown below
		\[\begin{tikzcd}
			\DSMO \arrow[dd, "\Forget"'] \arrow[rrrr, "\Data"] &&&&\TSD \arrow[dd, "\Recon"] \\
			&& \calP \arrow[ull, "F"] \arrow[dashed]{dll}[name = n1, swap]{} \arrow[dashed]{drr}[name = n2]{}  \\
			\DS &&&& \DS
			\arrow[shorten <=1pt, shorten >=1pt, Rightarrow, to path={(n1) to[out=-45,in=225] (n2)} ]{  }
		\end{tikzcd}\]
	\end{definition}
	
	Note how observation-based consistency is a special case of parameterized consistency. Changing the direction of the natural transformation leads to :
	
	\begin{definition} [Simulator]
		A reconstruction $\Recon$ as in Diagram \eqref{eqn:what_recon}  will be called \emph{observation-simulator} if there is a natural transformation $\Recon \circ \Data \circ \iota_{obs} \Rightarrow \Forget \circ \iota_{obs}$, as shown below
		\[\begin{tikzcd}
			\DSMO \arrow[dd, "\Forget"'] \arrow[rrrr, "\Data"] &&&&\TSD \arrow[dd, "\Recon"] \\
			&& \DS_{obs} \arrow[ull, "\iota_{obs}"] \arrow[dashed]{dll}[name = n1, swap]{\iota''_{obs}} \arrow[dashed]{drr}[name = n2]{}  \\
			\DS &&&& \DS
			\arrow[shorten <=1pt, shorten >=1pt, Rightarrow, to path={(n2) to[out=225,in=-45] (n1)} ]{  }
		\end{tikzcd}\]
	\end{definition}
	
	Thus being a observation simulator means that if the construction functor is applied to a data-object $X$ arising from a fully observed, observable dynamical system, the new data-object $X'$ it produces may not be isomorphic to $X$ but can be transformed into $X$. One can define more generally :
	
	\begin{definition} [Parameterized simulator]
		Suppose there is a functor $F: \calP \to \DSMO$, interpretable as a parameterized sub-family of dynamical systems being measured along orbits. A reconstruction $\Recon$ as in Diagram \eqref{eqn:what_recon}  will be called \emph{consistent} with respect to $F$, or simple $F$-consistent, if there is a natural transformation $\Recon \circ \Data \circ F \Rightarrow \Forget \circ F$, as shown below
		\[\begin{tikzcd}
			\DSMO \arrow[dd, "\Forget"'] \arrow[rrrr, "\Data"] &&&&\TSD \arrow[dd, "\Recon"] \\
			&& \calP \arrow[ull, "F"] \arrow[dashed]{dll}[name = n1, swap]{} \arrow[dashed]{drr}[name = n2]{}  \\
			\DS &&&& \DS
			\arrow[shorten <=1pt, shorten >=1pt, Rightarrow, to path={(n2) to[out=225,in=-45] (n1)} ]{  }
		\end{tikzcd}\]
	\end{definition}
	
	One can ask for a stronger property of reconstruction algorithms, to yield exact reconstructions. This suggests the following natural property :
	
	\begin{definition} [Observation-exact reconstruction]
		A reconstruction $\Recon$ as in Diagram \eqref{eqn:what_recon}  will be called \emph{observation-exact} if the isomorphism on the left holds : 
		\begin{equation} \label{eqn:sls0}
			\begin{tikzcd} \Recon \circ \Data \circ \iota_{obs} \arrow[d, leftrightarrow, "
				\cong"'] \\ \Forget \circ \iota_{obs} \end{tikzcd}, \quad
			\begin{tikzcd}
				\DSMO \arrow[dd, "\Forget"'] \arrow[rrrr, "\Data"] &&&&\TSD \arrow[dd, "\Recon"] \\
				&& \DS_{obs} \arrow[ull, "\iota_{obs}", Shobuj] \arrow[d, dotted, Shobuj, "\iota''_{obs}"'] \\
				\DS && \DS \arrow[rr, "\cong"] \arrow[ll, "\cong"'] && \DS
			\end{tikzcd}
		\end{equation}
	\end{definition}
	
	Note that if two dynamical systems are related by an invertible change of variables, then their data will have the exact information content. Thus reconstruction and consistency can be exact only up to isomorphisms. The diagram on the right expresses this equivalence of functors via a commutation diagram. The two sides of the equality correspond to the two composite functors created by the clockwise and counter-clockwise paths. We define similarly :
	
	\begin{definition} [Generalized exact reconstruction] \label{def:F_exact}
		Given any functor $F : \calP \to \DSMO$, a reconstruction is said to be \emph{exact} w.r.t. $F$ if $\Recon \circ \Data \circ F \cong \Forget \circ F$. In other words there is a functor $G : \calP \to \DS$ creating the following commutative diagram :
		\begin{equation} \label{eqn:od0vo}
			\begin{tikzcd}
				\DSMO \arrow[dd, "\Forget"'] \arrow[rrrr, "\Data"] &&&&\TSD \arrow[dd, "\Recon"] \\
				&& \calP \arrow[ull, "F", Shobuj] \arrow[d, dotted, Shobuj, "G"'] \\
				\DS && \DS \arrow[rr, "\cong"] \arrow[ll, "\cong"'] && \DS
			\end{tikzcd}
		\end{equation}
	\end{definition}
	
	This completes a list of several properties of reconstructions based on the relations $\Recon$ might try to achieve. One can also characterize reconstructions by their methodology. They mainly fall under two types - \textit{inner} and \textit{outer} approximations.
	
	\begin{definition} [Observation based inner / outer approximation]
		A reconstruction $\Recon$ as in Diagram \eqref{eqn:what_recon}  will be called an \emph{observation-based} inner approximation if for every $\TSD$-object $X$, $\Recon(X)$ is the colimit of all those observable dynamical systems $(\Omega, \Phi)$ for which there a $\DSMO$ version $(\Omega, \Phi, \phi, A)$ whose data $Y := \Data \paran{\Omega, \Phi, \phi, A}$ transforms into $X$. Similarly, $\Recon$ will be called an observation-based outer approximation $\Recon(X)$ is the limit of all those observable dynamical systems $(\Omega, \Phi)$ for which there a $\DSMO$ version $(\Omega, \Phi, \phi, A)$ whose data $Y := \Data \paran{\Omega, \Phi, \phi, A}$ can be transformed from $X$.
	\end{definition}
	
	An inner approximation $\Recon(X)$ looks for the closest match to the data $X$ from the left, while an outer approximation looks for the closest match from the right. They are analogous to the notions of greatest lower bounds and least upper bounds. One can define more generally :
	
	\begin{definition} [Generalized inner-outer approximation]
		Suppose there is a functor $F: \calP \to \DSMO$, interpretable as a parameterized sub-family of dynamical systems being measured along orbits. A reconstruction $\Recon$ as in Diagram \eqref{eqn:what_recon}  will be called an inner approximation based on the parameterization $F$ if for every $\TSD$-object $X$, $\Recon(X)$ is constructed as follows : consider the collection $\text{Pre}(X)$ of all those parameter values $p\in \calP$ such that the data from $F(p)$ transforms into $X$. Then $\Recon(X)$ is built as the colimit of all dynamical systems arising from such $F(p)$ . One analogously defines outer approximations based on the parameterization $F$.
	\end{definition}
	
	The notions of exactness, consistency, simulation and inner / outer approximations are independent. Our first result in this section establishes how parameterization based inner or outer approximations also achieve a certain universality in terms of consistency or simulations :
	
	\begin{theorem} [Universality of inner / outer approximations] \label{thm_consistent:1}
		Let Assumptions \ref{A:concrete} and \ref{A:obs} hold, and $F:\calP \to \DSMO$ be a parameterized subcategory of $\DSMO$.
		\begin{enumerate} [(i)]
			\item Inner and outer approximations based on $F$ exist. 
			\item The inner approximation is universal in the following sense : Suppose $\Recon' : \TSD \to \DS$ is another reconstruction functor which is a simulator with respect to the parameterization, i.e., there is a natural transformation $\alpha : \Recon' \circ \Data \circ F \Rightarrow \Forget\circ F$. Then there is an even more fundamental natural transformation $\beta : \Recon' \Rightarrow \Recon$ such that $\alpha$ is related to $\beta$ as $\alpha = \beta \circ \paran{ \Data \circ F }$.
			\item The outer approximation is universal in the following sense : Suppose $\Recon' : \TSD \to \DS$ is another reconstruction functor which is consistent with respect to the parameterization, i.e., there is a natural transformation $\alpha : \Forget\circ F \Rightarrow \Recon' \circ \Data \circ F$. Then there is an even more fundamental natural transformation $\beta : \Recon \Rightarrow \Recon'$ such that $\alpha$ is related to $\beta$ as $\alpha = \paran{ \Data \circ F } \circ \beta$.
		\end{enumerate}
	\end{theorem}
	
	The notion of parameterized inner or outer approximations are right and left Kan extensions in the language of category theory :
	\begin{equation} \label{eqn:Recon:Kan}
		\begin{tikzcd} [column sep = large, scale cd = 0.7]
			\calP \arrow[r, "F"] & \DSMO \arrow[d, "\Data"'] \arrow[r, "\Forget"'] & \DS  \\
			& \TSD \arrow[r, "\Recon"'] & \DS
		\end{tikzcd} , \quad 
		\begin{tikzcd} [column sep = large, scale cd = 0.7]
			\DS_{obs} \arrow[r, "\iota_{obs}"] & \DSMO \arrow[d, "\Data"'] \arrow[r, "\Forget"'] & \DS  \\
			& \TSD \arrow[r, "\Recon"'] & \DS
		\end{tikzcd} 
	\end{equation}
	Theorem \ref{thm_consistent:1} is a direct restatement of the existence and universality of Kan extensions. See Section \ref{sec:app:Kan} for more details. Kan extensions are a consequence of purely diagrammatic calculus and have been used to interpret best-approximations in various other contexts. Some examples are topological data analysis \cite[e.g.]{BubenikScott2014prstnt, Curry2015tda, Shiebler2022kan}, general data-science \cite[e.g.]{SpivakWisnesky2020fast, Shiebler2022kan}, and the theory of programming \cite[e.g.]{Hinze2012kan, paterson2012constr, Yanofsky2013kol}. Theorem \ref{thm_consistent:1} thus presents a direct advantage of the categorical path taken in this article. Since observation based inner / outer approximations are a special case of general parametrized  inner / outer approximations we have the following immediate corollary to Theorem \ref{thm_consistent:1} by taking $\calP = \DSM$ and $F = \iota_{obs}$ as shown in \eqref{eqn:Recon:Kan} :
	
	\begin{corollary}
		Let Assumptions \ref{A:concrete} and \ref{A:obs} hold.
		\begin{enumerate} [(i)]
			\item Observation based inner and outer approximations based on $F$ exist. 
			\item The inner approximation is universal in the following sense : Suppose $\Recon' : \TSD \to \DS$ is another reconstruction functor which is a simulator with respect to the parameterization, i.e., there is a natural transformation $\alpha : \Recon' \circ \Data \circ F \Rightarrow \Forget\circ F$. Then there is an even more fundamental natural transformation $\beta : \Recon' \Rightarrow \Recon$ such that $\alpha$ is related to $\beta$ as $\alpha = \beta \circ \paran{ \Data \circ F }$.
			\item The outer approximation is universal in the following sense : Suppose $\Recon' : \TSD \to \DS$ is another reconstruction functor which is consistent with respect to the parameterization, i.e., there is a natural transformation $\alpha : \Forget\circ F \Rightarrow \Recon' \circ \Data \circ F$. Then there is an even more fundamental natural transformation $\beta : \Recon \Rightarrow \Recon'$ such that $\alpha$ is related to $\beta$ as $\alpha = \paran{ \Data \circ F } \circ \beta$.
		\end{enumerate}
	\end{corollary}
	
	Equation \eqref{eqn:sls0} is a special case of \eqref{eqn:od0vo}. The role of the generic functor $F$ in the upper-left corner of \eqref{eqn:od0vo} is played by $\iota_{obs}$ in \eqref{eqn:sls0}.  A reconstruction functor $\Recon$ is defined on datasets in general, and is oblivious of the underlying source. If $\Recon$ satisfies the functorial equality in \eqref{eqn:sls0}, then $\Recon$ achieves consistency on the restricted class of data-sources encoded by the functor $F$.
	
	We have been fixing a parameterization $F: \calP \to \DSMO$ and looking for reconstructions which attain optimality in various sense. One can examine Diagram \ref{eqn:what_recon} from the opposite direction. One can fix an arbitrary reconstruction $\Recon$ and then try to characterize the $\DSMO$ objects which are faithfully recreated by $\Recon$. One can make the following general statement :
	
	\begin{theorem} \label{thm_consistent:2}
		Given any reconstruction functor $\Recon : \TSD \to \DS$, the collection of dynamical systems with measured orbits which allow a reconstruction form a subcategory of $\DSMO$.
	\end{theorem}
	
	Theorem \ref{thm_consistent:2} assumes that the arrow labeled $\Recon$ in Diagram \eqref{eqn:od0vo} is given. The statement of the theorem is that given such a $\Recon$ the arrows $F$ and $G$ exist. The latter pair of arrows forms an \textit{equalizer} diagram, and the existence of an equalizer follows directly from the basic properties of $\CatCat$, the category of categories. An unspecified reconstruction functor $\Recon$ makes it impossible to determine the collection of objects and semi-conjugacies in $\DSMO$ which are successfully recreated by $\Recon$. In spite of this lack of specificity, Theorem \ref{thm_consistent:2} infers a categorical structure purely by categorical arguments.
	We next present our main result of this chapter : 
	
	\begin{theorem} \label{thm_consistent:3}
		Let Assumptions \ref{A:concrete} and \ref{A:obs} hold. Then an observation-based inner approximation is also observation-exact.
	\end{theorem}
	
	Theorem \ref{thm_consistent:3} shows that even if the reconstruction strategy is simply set to be the simplest dynamics generating the given dataset, then it achieves consistency if the dataset comes from a full observation. Diagram \eqref{eqn:outline:1} represents the most general situation of dynamics, data and reconstruction, and it is in general hard to equate reconstruction with the original source. Theorem \ref{thm_consistent:3} provides a sufficient and commonly occurring condition under which this is possible. Theorem \ref{thm_consistent:3} is proved in Section \ref{sec:app:recon}. The main idea behind the proof is that inner and outer extensions correspond to categorical constructs called \textit{right} and \textit{left Kan extensions}, explained in detail in Appendix \ref{sec:app:Kan}. The following corollary investigates the case when the data available is a full shift :
	
	\begin{corollary} \label{corr:full_shift}
		Let $\calC = \Topo$ or $\MeasCat$ and $\calC_{obs} = \mathrm{FinSet}$. If a time series data $X $ is given by $X_n = \mathbb{A}^n$, where $\mathbb{A}$ is a fixed set of alphabets, then we have
		\[
		\REnv{\Data}{\Forget} (X) = \paran{ \sigma, \mathbb{A}^{\num} }.
		\]
	\end{corollary}
	
	This corollary follows from an argument similar to \ref{thm_consistent:2}. The only difference is that we consider $(\sigma,\mathbb{A}^\num, \pi_0:\mathbb{A}^\num \to \mathbb{A})$ as the candidate object of reconstruction.
	
	\begin{example} \label{ex:Tent}
		Suppose $\calC = \MeasCat$, $\Omega = [0,1]$ and $\Phi:[0,1] \to [0,1]$ be defined by
		\[
		\Phi(x) = \begin{cases}
			2x & (0\leq x < \frac{1}{2})\\
			2x-1 & (\frac{1}{2}\leq x \leq 1).
		\end{cases}
		\]
		We define an observable $\phi$ by
		\[
		\phi(x) = \begin{cases}
			0 & (0\leq x < \frac{1}{2})\\
			1 & (\frac{1}{2}\leq x \leq 1).
		\end{cases}
		\]
		Obviously, $\Data (\Omega, \Phi, \phi)_n = \{0,1\}^n$ for every $n \in \num$. By Corollary \ref{corr:full_shift}, the result of the reconstruction based on this data is the binary full shift:
		\[
		\REnv{\Data}{\Forget} \paran{ \Data (\Omega, \Phi, \phi) } = \paran{\sigma, \{0,1\}^{\num}}.
		\]
		This reconstruction is exact.
	\end{example}
	
	Our category theoretic investigation of the entire process of dynamics to data generation, followed by a reconstruction back to a dynamical system allows us to make some statements of great generality such as in Theorems \ref{thm_consistent:1} and \ref{thm_consistent:2}. However, a categorical treatment tends to lose sight of implementation details and convergence properties of specific reconstruction strategies. Our approach makes no assumptions on the domain or dynamics, beyond the minimal and broad assumptions on the domain type in Assumption \ref{A:concrete}. Thus, the usual tools of dynamical systems analysis, such as assumptions on positive Lyapunov exponents, rotational nature, Smale horseshoe structures are unavailable. This lack of specific details is not a disadvantage but an essential feature of any mathematical characterization of data-driven techniques. The commonly used data-driven approaches, such as Koopman based \cite{FroylandEtAl2010coherent, DasGiannakis_delay_2019, DGJ_compactV_2018}, Ulam's method \cite[e.g.]{Froyland1998approx, Froyland1999ulam}, interpolation based \cite{HaraKokubu2024learn} and symbolic dynamics based \cite[e.g.]{AlpernPrasad1989coding, AlpernPrasad2005towers}, avoid any assumption on the dynamics and instead utilize collective properties such as dense subsets having desirable topological or measure-theoretic properties.
	
	\begin{definition} [Ceiling and floor structure] \label{def:ceil_floor}
		A category $\calP$ is said to be equipped with a \emph{floor structure} if there is a functor $\lambda : \Lambda \to \calP$ satisfying :
		\begin{enumerate} [(i)]
			\item There is a functor $\lambda^* : \calP \to \Lambda$ and a natural transformation $\Id_{\calP} \Rightarrow \lambda \circ \lambda^*$.
			\item Every left slice in $\Lambda$ is finite.
		\end{enumerate}
		A functor $\lambda : \Lambda \to \calP$ is said to have \emph{ceiling structure}  if condition (i) above reads $\lambda \circ \lambda^* \Rightarrow \Id_{\calP}$, and in (ii) above, right (instead of left) slices are finite.
	\end{definition}
	
	This definition is inspired by the familiar "ceiling" and "floor" operations in arithmetic. The are functorial and obey the above relations with the inclusion of the integers in the reals, although $\Nplus$ does not have finite right slices.
	\[\begin{tikzcd}
		\Nplus \arrow[d, hook, "\subset"'] &&& \Rplus \arrow[lll, "\text{floor}"'] \\
		\Rplus \arrow[urrr, bend right=10, Rightarrow, dashed] &&& \Nplus \arrow[u, hook, "\subset"'] \arrow[ulll, pos=0.7, "="']
	\end{tikzcd} , \quad 
	\begin{tikzcd} 
		\Rplus \arrow[drrr, bend right=10, Rightarrow, dashed] \arrow[rrr, "\text{ceil}"] &&& \Nplus \arrow[d, hook, "\subset"] \\
		\Nplus \arrow[u, hook, "\subset"] \arrow[urrr, pos=0.7, "="] &&& \Rplus
	\end{tikzcd}\]

	\begin{theorem} [Conditions for computability] \label{thm:comput}
		Consider a parameterization $F : \calP \to \DSMO$, and suppose that $\calP$ has a floor / ceiling structure. Then the $F$ based inner / outer approximations are computable.
	\end{theorem}
	
	Theorem \ref{thm:comput} is a direct consequence Lemma \ref{lem:Kan_id:2} presented later in Section \ref{sec:app:Kan}. For every integer $L\in\num$, let $\Lambda^{(L)}$ be the subcategory $\Lambda$ spanned by objects whose left-slice size is no larger than $L$. Then the category $\Lambda$ is the direct-limit or colimit of the following sequence of inclusions
	\[\begin{tikzcd} \Lambda^{(1)} \arrow[r, "\subseteq"] & \Lambda^{(2)} \arrow[r, "\subseteq"] & \Lambda^{(3)} \arrow[r, "\subseteq"] & \cdots \arrow[r] & \Lambda \end{tikzcd}\]
	One similarly obtains a gradation for right-finite categories. The following flowchart presents a method for computing an inner approximation by utilizing this gradation :
	
	\begin{tikzpicture}[scale=0.6, transform shape, framed, background rectangle/.style={double, ultra thick, draw=gray, rounded corners}]
		\node [style={rect2}] (n1) at (0, 0) {Pick an object $X$ from $\TSD$};
		\node [style={rect2}] (n2) at (0.8\columnA, 0) {Left slice $\Comma{\Data \circ F}{X}$ in $\calP$};
		\node [style={rect2}] (n3) at (1.6\columnA, 0) {Left slice $\Comma{\Data \circ F}{X}^{(L)}$ in $\calP^{(L)}$};
		\node [style={rect2}] (n4) at (2.4\columnA, 0) {Image of the left slice $\Comma{\Data \circ F}{}^{(L)}$ under $\Forget$};
		\node [style={rect2}] (n5) at (2.4\columnA, -1.5\rowA) {Compute the colimit $\paran{ \Omega_L, \Phi_L }$ of this finite diagram in $C$};
		\node [text width=1.0\columnA, text centered, minimum height=0.2\rowA, shape=rectangle, draw=ChhaiB] (n6) at (1.4\columnA, -1.5\rowA) {The diagram \[\Comma{\Data \circ F}{X}^{(L)} \to \DS \] is a colimit of the diagrams \[\Comma{fa}{x}^{(L)} \to \DS\] };
		\node [text width=1.0\columnA, text centered, minimum height=0.2\rowA, shape=rectangle, draw=ChhaiB] (n7) at (0.2\columnA, -1.5\rowA) {$\colim_{L\to \infty} \paran{ \Omega_L, \Phi_L }$ equals $\paran{ \Omega, \Phi }$ which is the inner approximation $\REnv{\Data F \lambda}{\Forget \lambda}(X)$ = $\REnv{\Data F}{\Forget}(X)$};
		\draw[-to] (n1) to (n2);
		\draw[-to] (n2) to (n3);
		\draw[-to] (n3) to (n4);
		\draw[-to] (n4) to (n5);
		\draw[-to] (n5) to (n6);
		\draw[-to] (n6) to (n7);
	\end{tikzpicture}
	
	The procedure for computing outer approximations is analogous. Two useful examples of the second parameterizing category $\Lambda$ are $\Nplus$ and $\Delta$, the simplex category. Theorem \ref{thm:comput} provides a framework for computability in dynamical systems. Any choice of this triple $\paran{\Lambda, \calP, \lambda}$ leads to a distinct computation paradigm. Note that if $\Lambda$ is a category with finite left slices and $\lambda : \Lambda \to \DSMO$ is an embedding, then the image subcategory $\calP := \ran \lambda$ is endowed with a floor structure. Thus an embedding of any category with finite left-slices lends a floor structure to at least one subcategory of $\DSMO$. Characterizing the maximal such subcategory is an open task. Even with the trivial choice $\calP := \ran \lambda$ there are some interesting computational paradigms
	
	\begin{example} [Quasiperiodic approximation]
		Suppose $\calS$ is a dense and countable collection of irrational numbers, with some ordering $s_1, s_2, \ldots$. For every $s\in \calS$, let $\text{Rot}_{s}$ denote the quasiperiodic rotation on $S^1$ with rotation number $s$. Consider the subcategory $\calP$ of $\DS$ spanned by all Cartesian products of finitely many such rotations. Each dynamical system in this collection is a quasiperiodic torus rotation. The natural preorder among the finite subsets of $\calS$ lead to a pre-ordered structure $\Lambda$ among $\calP$. Thus $\calP$ has a floor structure. An inner approximation with respect to $\calP$ leads to a quasiperiodic approximation of the data. This is the basis of all techniques such as DMD \cite[e.g.]{RowleyEtAl09} and spectral approximation of the Koopman operator \cite{DGJ_compactV_2018, ValvaGiannakis2023cnstnt}.
	\end{example}
	
	\begin{example} [Approximation by subshifts]
		For each $n\in\num$ let $S_n$ be the full shift on $n$ symbols. For each $m<n$, $S_m$ may be embedded within $S_n$ in $^n C_m$-many ways. Next note that for any pair of embeddings as shown below
		\[\begin{tikzcd}
			& S_{n_1} \\
			S_m \arrow[ur, "j_1"] \arrow[dr, "j_2"] \\
			& S_{n_2}
		\end{tikzcd} \imply 
		\begin{tikzcd}
			& S_{n_1} \arrow[dr, dotted] \\
			S_m \arrow[ur, "j_1"] \arrow[dr, "j_2"] & & j_1 \vee j_2 \\
			& S_{n_2} \arrow[ur, dotted]
		\end{tikzcd}\]
		one has a pushout diagram resulting in a new sub-shift which is the gluing of $S_{n_1}$ and $S_{n_2}$ along their embedded images of $S_m$. Let $\calP$ be the collection of all sub-shifts obtained by finitely many such gluings. Such a $\calP$ has a floor structure and the inner-approximation it provides attempts to model time series data as chaotic signals of various combinatorial complexity.
	\end{example}
	
	Finally, we comment on the impossibility of general and exact reconstruction scheme. In practice, only countable number of spaces are available as observable objects and therefore amenable to numerical procedures while there are uncountably many isomorphism classes of dynamical systems. Thus, as long as the reconstruction scheme only yields observable dynamics, there should be dynamical systems that cannot be reconstructed. More generally, we conjecture the following result that will exclude possibility of a universal data-driven reconstruction technique that provides an exact reconstruction.
	
	\begin{conjecture} \label{conj:oj9l}
		Suppose $\calC$ has uncountably many isomorphism classes of objects, while $\calC_{obs}$ has countably many. Then, for any reconstruction scheme $\Recon$, the subcategory of exact reconstructions (Theorem \ref{thm_consistent:2}) is a proper sub-category. In other words, there are DSMO objects not reconstructible by $\Recon$.
	\end{conjecture}
	
	Further search for inner / outer approximation paradigms is an interesting avenue for investigation.
	
	\paragraph{Conclusions} The major realizations we have reached are :
	\begin{enumerate} [(i)]
		\item The process of data generation from dynamical systems being measured along orbits is functorial.
		\item Time series data can be modeled as a chain of finite sequences of data, with decreasing information content.
		\item The limiting behavior of consistent reconstruction algorithms are functorial.
		\item Inner and outer approximations exist, additional conditions need to be imposed to ensure that they are computable or exact.
	\end{enumerate}
	
	Category theory makes up for the lack of attention to individual detail by offering a collective perspective. A classical example is homology, which is a record of all the possible ways chain complexes map into a topological space. The countable collection of  simplices and simplicial morphisms provide a computational as well as descriptive framework within the collection of uncountably infinitely many topological objects. Our analysis, along with Theorems \ref{thm_consistent:1} -- \ref{thm:comput} points towards an a similar possibility in the field of dynamical systems and data-driven reconstruction.
	\section*{Acknowledgments}
	The authors are grateful to the referees, whose comments and suggestions helped improve the presentation greatly. 
	
	\section{Appendix}
	\subsection{The notion of colimits} \label{sec:app:colim}
	
	Suppose $J$ and $\calX$ are two categories. Then for any object $x\in ob(\calX)$ there is a constant functor that sends every object of $J$ into $x$, and every morphism into $\Id_x$. This functor is denoted as $x:J\to \calX$. Given a functor $F:J\to \calX$, a \emph{co-cone} for $F$ is a natural transformation $\eta : F \Rightarrow x$ from $F$ to a constant functor $x$. The object $x$ is called the \emph{tip} of the co-cone. A \emph{colimit} of $F$ is a universal co-cone, that means it is a co-cone $\bar{\eta} : F \Rightarrow \bar{x}$ such that for any other co-cone $\eta : F \Rightarrow x$, there is a unique $\phi : \bar{x} \to x$ such that $\eta = \phi \circ \bar{\eta}$. The tip $\bar{x}$ of the universal co-cone is also called the colimit of the functor $F$. Colimits are closely related to their dual notion : \emph{limits}.
	
	Colimits and limits are a sweeping generalization of various fundamental constructs in mathematics, such as limits, convergence, min, max, union, intersection, inverse images, and gluing. The functor $J$ can interpreted as a pattern and $F$ as a diagram existing within $\calX$ in the shape of $J$. $J$ may be finite or infinite. For example a terminal object is thus the colimit of the identity functor $\Id_{\calX} : \calX \to \calX$. The colimit $\bar{x}$ provides a sense of completion or termination of the pattern. Colimits may not exist, but if they do, they are unique up to isomorphism.
	
	A category $\calC$ is called \emph{complete} (resp. cocomplete) if for any category $J$, any functor $F:J\to \calC$ has a limit (resp. colimit). In our case, if the context category $\Context$ is either $\Topo, \MeasCat, \SetCat$, it is co-complete.
	
	\begin{property} \label{Py:cocomplt}
		A category satisfying Assumption \ref{A:concrete} is complete and cocomplete.
	\end{property}  
	
	Property \ref{Py:cocomplt} will be crucial in our study of Kan extensions later.
	Thus topological, measure-theoretic as well as set-theoretic dynamical systems have colimits. One of the simplest notions of limits is products. Categorical products are a generalization of the usual Cartesian products of sets, topological spaces or metric spaces. In all these contexts, if one has an indexed family of objects $\SetDef{A_i}{i\in I}$, then one can form a product object $A:=\prod_{i\in I} A_i$ which lies in the same context as the $A_i$. We shall use the symbols $\prod$ and $\times$ interchangeably to denote a Categorical product. For each $i\in I$ there is a \emph{projection} morphism $\pi_i : A\to A_i$. In the familiar contexts mentioned above, the projections are the usual coordinate-wise projections. In a general category, they are defined by a \emph{universal} property \cite[see]{Maclane2013}. When the index set $I=\num$ and all the $A_i$-s are the same object $\mathbb{A}$, the product is denoted as $\mathbb{A}^{\num}$. In the usual set theoretic interpretation $\mathbb{A}^{\num}$ denotes the collection of all infinite sequences of elements of $\mathbb{A}$, indexed by $0,1,\ldots$. If $\mathbb{A}$ is endowed with a topology, then $\mathbb{A}^{\num}$ inherits the product topology.
	
	The following classical result guarantees the existence of (co)-limits under a simple condition :
	
	\begin{lemma} \label{lem:dod3p}
		If a category $\calC$ has all $J$ limits and / or colimits, then so does the functor category $\Functor{A}{\calC}$ for any category $A$.
	\end{lemma}
	
	\subsection{Adjunction}
	
	An adjunction between categories $\calC, \calD$ is a pair of functors $F : \calD\to \calC$, $G: \calC\to \calD$ such that there is natural isomorphism $\Phi$ between the following two Set-valued functors
	\begin{equation}\begin{split} \label{eqn:lp38}
			\Hom_{\calC}( F\cdot; \cdot) &= \Hom_{\calC} \circ \left( F \times \Id_{\calC} \right) : \calD^{op} \times \calC \to \SetCat . \\
			\Hom_{\calD}( \cdot; G\cdot) &= \Hom_{\cal\cal} \circ \left( \Id_{\calD} \times G \right) : \calD^{op} \times \calC \to \SetCat .
	\end{split}\end{equation}
	This means that for every $c\in ob(\calC), d\in ob(\calD)$, there is a bijection 
	\[ \Phi_{d,c} : \Hom_{\calC}( Fd; c) \xrightarrow{\cong} \Hom_{\calD}(d; Gc) , \]
	which is also natural in the following sense :
	\begin{equation} \label{eqn:oind9}
		\forall \; 
		\begin{tikzcd}[column sep = small]
			d & c \arrow{d}{\psi} \\
			d' \arrow{u}{\phi} & c'
		\end{tikzcd} , \quad
		\begin{tikzcd}[column sep = small]
			Fd & Gc \arrow{d}{G(\psi)} \\
			Fd' \arrow{u}{F(\phi)} & Gc'
		\end{tikzcd} , \quad 
		\begin{tikzcd} [column sep = large]
			\Hom_{\calC}( Fd; c) \arrow{d}{\cong}[swap]{ \Phi_{d,c} } \arrow{r}{ \psi\circ \cdot \circ F(\phi) } & \Hom_{\calC}( Fd'; c') \arrow{d}{ \Phi_{d', c'} }[swap]{\cong} \\
			\Hom_{\calD}(d; Gc) \arrow{r}{ G(\psi) \circ \cdot \circ \phi } & \Hom_{\calD}( d'; Gc' )
		\end{tikzcd}
	\end{equation}
	Equivalently, adjunction means : 
	\begin{equation} \label{eqn:nr83}
		\forall h \in \Hom_{\calC}( Fd; c), \quad \Phi_{d',c'} \left( \psi\circ h \circ F(\phi) \right) = G(\psi) \circ \Phi_{d,c}h \circ \phi.
	\end{equation}
	Note that in \eqref{eqn:lp38}, the relation between $F$ and $G$ is not symmetric. $F$ is called left adjoint to $G$, $G$ is called the right adjoint to $F$. This relation is depicted as $F \dashv G$.
	
	Adjunction is a generalization of the notion of inverse. Left or right inverses if they exist, are left and right adjoints. Thus a pair of invertible functors form an adjunction pair. An inversion is based on a one-to-one correspondence between objects in the domain and codomain categories. In the absence of such a correspondence, one can hope for the closest approximation to an inversion. The notion of "closeness" or "proximity" is borne by morphisms within a category. Morphisms are directional in nature. So an approximation can be from the left or from the right, leading to the notions of left and right adjoints respectively. Adjunction is the basis for the concept of Kan extensions.
	
	\subsection{Kan extensions} \label{sec:app:Kan}
	
	Kan extensions \cite[e.g.]{perrone2022kan, street2004categorical, Riehl_homotopy_2014} are universal constructions which generalize the notion of constrained optimization to the categorical context. Suppose we have the following arrangement of  categories and functors:
	\begin{equation} \label{eqn:dpp3k}
		\begin{tikzcd}
			D\\
			X \arrow{r}{F} \arrow{u}[swap]{K} & E 
		\end{tikzcd}
	\end{equation}
	When we regard $K$ to be an inclusion of $X$ into $D$, Kan extensions can be interpreted as the most general generalizations of the functor $F$ to the domain $D$. One can alternatively interpret $F$ as the constraint of the optimization problem where we seek the best functor $D \to E$ that satisfies it.
	
	Definitions are as follows. A \emph{left Kan extension} of $F$ along $K$ is a functor $\Lan_{K}{F} : D\to E$ along with a minimum natural transformation $\eta : F\Rightarrow \Lan_{K}{F} \circ K$. Moreover, this pair $\paran{ \Lan_{K}{F}, \eta }$  is minimum / universal in the sense that for every other functor $H:D\to E$ along with a natural transformation $\gamma : F \Rightarrow H\circ K$, there is a unique natural transformation $\tilde\gamma : \Lan_{K}{F} \Rightarrow H$ s.t. $\gamma = \paran{ \tilde\gamma \star \Id_{K} } \circ \eta$.  This is shown in the diagram below.
	\[ \mathcal{L} := \Lan_{F} (K), \quad 
	\begin{tikzcd}
		& & & E \\
		E & X \arrow[dashed, bend left = 10]{urr}[name=x1]{} \arrow[dashed]{drr}[name=x2]{} \arrow{l}[name=F]{F} \arrow{r}{K} & D \arrow{dr}[name=H]{H} \arrow{ur}[swap, name=L]{ \mathcal{L} } & \\
		& & & E 
		\arrow[shorten <=2pt, shorten >=3pt, Rightarrow, to path={(F) to[out=90,in=180] (x1)} ]{  }
		\arrow[shorten <=2pt, shorten >=3pt, Rightarrow, to path={(F) to[out=-90,in=225] (x2)} ]{  }
		\arrow[shorten <=1pt, shorten >=1pt, Rightarrow, to path={(L) to[out=-45,in=45] (H)} ]{  }
	\end{tikzcd}\]
	The \emph{right Kan extension} of $F$ along $K$ is a dual notion to the left Kan extension and the definition is similar except the directions of arrows. It is a functor $\Ran_{K}{F} : D\to E$ along with a natural transformation $\epsilon : \Ran_{K}{F} \circ K \Rightarrow F$, which is maximum among similar pairs of functor $H:D \to E$ and natural transformation $\gamma: H \circ K \Rightarrow F$ in the sense there exists a unique natural transformation $\tilde{\gamma} : H \Rightarrow \Ran_K F$ such that $\gamma = \epsilon \circ \paran{ \tilde\gamma \star \Id_{K} } $.
	If $E$ is a co-complete category, the left Kan extension always exists. Similarly if $E$ is a complete category, the right Kan extension always exists. In case both left and right Kan extensions of $F$ along  $K$ exist, they combine to produce the following diagram :
	\begin{equation} \label{eqn:L_R_Env}
		\begin{tikzcd} [column sep = large]
			E & E & E \\
			& X \arrow[dashed]{ul}[name=x2]{} \arrow[dashed]{ur}[name=x1]{} \arrow{d}{K} \arrow{u}[name=F]{F} \\ 
			& D \arrow[]{uur}[swap]{\REnv{K}{F}} \arrow[]{uul}{\LEnv{K}{F}} &
			\arrow[shorten <=2pt, shorten >=3pt, Rightarrow, to path={(F) -- (x1)},]{}
			\arrow[shorten <=2pt, shorten >=3pt, Rightarrow, to path={(x2) -- (F)},]{}
		\end{tikzcd}
	\end{equation}
	It is often possible to explicitly determine action of a Kan extension on objects, as shown below :
	
	\begin{lemma} \label{lem:Kan_pointwise}
		Consider the arrangement of \eqref{eqn:dpp3k}. Then 
		\begin{equation} \label{eqn:Kan_pointwise}
			\begin{split}
				\mbox{If } E \mbox{ is cocomplete, } \mbox{ then }& \Lan_{K}{F}(d) = \colim \SetDef{Fx}{ Kx \to d } \\ 
				\mbox{If } E \mbox{ is complete, } \mbox{ then } & \Ran_{K}{F}(d) = \lim \SetDef{Fx}{ d \to Kx } \\ 
			\end{split}
		\end{equation}
	\end{lemma}
	
	This construction is known as the \emph{pointwise} definition of Kan extensions. The colimit and limit in \eqref{eqn:Kan_pointwise} are along slices of the object $d$ along $K$. Note that each slice, left or right, can be interpreted as a constraint on the objects of $K$. The act of finding limits or colimits is analogous to finding the minimum or maximum under this constraint.   
	
	\begin{lemma} \label{lem:psi93}
		Suppose Property \ref{Py:cocomplt} hold. Then any arrangement of functors 
		\[\begin{tikzcd}
			\calY && \calX \arrow[ll, "F"'] \arrow[rr, "G"] && \DS
		\end{tikzcd}\]
		has a right and left Kan extensions of $G$ along $F$.
	\end{lemma}
	
	\begin{proof}
		Since by Property \ref{Py:cocomplt} $\Context$ is (co)-complete, the dynamics category $\DS = \Functor{\Time}{\Context}$ is (co)-complete too. Thus by Lemma \ref{lem:Kan_pointwise}, the Kan extensions exist, and have the pointwise definition expressed in \eqref{eqn:Kan_pointwise}.
	\end{proof} 
	
	\paragraph{Pre- and post- adjoints} Let $T : A \to B$ be a functor. Then $T^* : B \to A$ is said to be a \textit{pre-right adjoint} to $T$ if $T T^* \Rightarrow Id_B$. Similarly, $T^* : B \to A$ is said to be a \textit{post-right adjoint} \cite[e.g.]{Das2023CatEntropy, Das2025null} to $T$ if $Id_B \Rightarrow T T^*$. The next lemma presents some useful conditions under which Kan extensions are preserved under pre-compositions.
	
	\begin{lemma} \label{lem:Kan_id:2}
		\cite[Lem 7.5]{Das2025null} Consider the following arrangement of functors $a, b, f$ as shown below.
		\begin{equation} \label{eqn:Kan_id:2}
			\begin{tikzcd}
				A \arrow[r, "a"] & B \arrow[rr, "b"] \arrow[d, "f"'] && C \\
				& E \arrow[rr, "\text{L/Ran}_{f} (b)"'] && C
			\end{tikzcd}
		\end{equation}
		\begin{enumerate} [(i)]
			\item If $C$ is co-complete and $a$ has a post-right adjoint, then $\REnv{b}{f} = \REnv{ba}{fa}$.
			\item If $C$ is complete and $a$ has a pre-right adjoint, then $\LEnv{b}{f} = \LEnv{ba}{fa}$.
			\item In particular if $C$ is co-complete and $a$ has a right inverse then $\REnv{b}{f} = \REnv{ba)}{fa}$.
			\item In particular if $C$ is complete and $a$ has a right inverse then $\LEnv{b}{f} = \LEnv{ba}{fa}$.
		\end{enumerate}
	\end{lemma}
	
	\subsection{Topological concrete categories} \label{sec:app:topo_cncrt}
	
	Topological concrete categories provide a powerful framework for abstracting and unifying concepts across different areas of mathematics and its applications. They are categories whose objects are sets equipped with some "topological-like" structure, and whose morphisms are functions that preserve that structure. A category $\calC$ is said to be topological concrete over a category $\calS$ if there is a functor $U:\calC\to\calS$ with the following property : given any object $X\in \calS$, any indexed family $\SetDef{\Omega_i}{i\in I}$ of objects in $\calC$, and $\calS$-morphisms $f_i : X\to U(\Omega_i)$, then :
	\begin{enumerate} [(i)]
		\item there exists a $\calC$-object $\bar{\Omega}$ such that $U(\bar{\Omega}) = X$;
		\item there exist $\calC$-morphisms $g_i : \bar{\Omega} \to \Omega_i$ such that for each $i\in I$, $U(g_i) = f_i$.
	\end{enumerate}
	This arrangement $\SetDef{ g_i : \bar{\Omega} \to \Omega_i }{ i\in I }$  is universal in the sense that for any other arrangement $\SetDef{ g'_i : \bar{\Omega'} \to \Omega_i }{ i\in I }$ and a morphism $\phi : U( \bar{\Omega'} ) \to X$ satisfying $U(g'_i) = \phi \circ f_i$ for each $i\in I$, there is a unique morphism $\psi : \bar{\Omega'} \to \bar{\Omega}$ such that
	\begin{enumerate} [(i)]
		\item $U(\psi) = \phi$;
		\item $g'_i = \psi \circ g_i$ for each $i\in I$.
	\end{enumerate}
	The object $\bar{\Omega}$ is called the \textit{initial lift} of the diagram $\SetDef{g_i : \bar{\Omega} \to \Omega_i}{i\in I}$. It is the simplest or minimal object in $\Context$ which allows that diagram in $\calS$ to be realized. In our article, the role of $\calS$ is played by $\SetCat$.
	
	Many fundamental categories in mathematics, such as the category $\Topo$ of topological spaces or $\MeasCat$ of measurable spaces, are examples of topological concrete categories. This framework allows theorems proven in the context of topological concrete categories to be immediately applied to these diverse areas.
	
	A key property of topological concrete categories is that the forgetful functor $U$ allows certain fundamental constructions in topology, like forming products, subspaces (equalizers), and quotient spaces (coequalizers) (which are specific types of limits and colimits), can be defined by performing the set-theoretic operation and then equipping the resulting set with the appropriate initial or final topology/structure. A few of the important properties of topological concrete categories are :
	
	\begin{property} \label{Py:set_inv}
		If Assumption \ref{A:concrete} holds, then there is a functor $U^* : \SetCat \to \Context$ which is a left adjoint of $U$.
	\end{property}
	
	\begin{property} \label{Py:U_faithful}
		If Assumption \ref{A:concrete} holds, then the functor $U$ is faithful, i.e., injective on morphisms.
	\end{property}
	
	\begin{property} \label{Py:U_terminal}
		If Property \ref{Py:set_inv} holds, then the functor $U$ maps $1_{\Context}$ into a single element set in $\SetCat$.
	\end{property}
	
	\begin{lemma} \label{lem_topcat}
		Let $\mathcal{C}$ be a topological concrete category and $U: \mathcal{C} \to \mathrm{Set}$ be the associated forgetful functor. Then
		\begin{enumerate}
			\item $\mathcal{C}$ is complete and cocomplete.
			\item A morphism  $f:X \to Y$ in $\mathcal{C}$ is monic (epic) if $Uf:UX \to UY$ is injective (surjective).
			\item Let  $f:X \to Y$, $f:Y \to Z$, $p:X \to Y'$ and $q:Y' \to Z$ be morphisms in $\mathcal{C}$. If $Ug \circ U f = U q \circ U p$, then $q\circ p = g \circ f$.
			\item Let $f: X \to Y$ be a morphism in $\mathcal{C}$. Then, we can define an epi-mono factorization of $f$,
			\[
			\begin{tikzcd}
				X \arrow[rd, two heads] \arrow[rr, "f"] &                                   & Y \\
				& {\mathrm{im}\,f} \arrow[ru, hook] &  
			\end{tikzcd}
			\]
			by taking $\mathrm{im}\,f$ to be the final lift of the sink $UX \twoheadrightarrow \mathrm{im}\, U f$ in $\mathrm{Set}$.
		\end{enumerate}
	\end{lemma}

	\subsection{Functors induced between comma categories} \label{sec:app:general}
	
	One can derive a functor between these two comma categories, as explained in the following lemma.	
	
	\begin{lemma} \label{lem:oh9d}
		\cite[Prop 6]{Das2024slice} Consider the arrangement of categories $\calA, \calB, \calC, \calD, \calE$
		\begin{equation} \label{eqn:gpd30}
			\begin{tikzcd}
				\calA \arrow{d}{I} \arrow[Holud]{r}{F} & \calB \arrow{d}[swap]{J} & \calC \arrow[Holud]{l}[swap]{G} \arrow{d}{K} \\
				\calA' \arrow[Akashi]{r}[swap]{F'} & \calB' & \calC' \arrow[Akashi]{l}{G'}
			\end{tikzcd}
		\end{equation}
		Then there is an induced functor between comma categories
		\begin{equation} \label{eqn:odl9}
			\Psi : \Comma{F}{G} \to \Comma{F'}{G'}, 
		\end{equation}
		Moreover, the following commutation holds with the marginal functors :
		\begin{equation} \label{eqn:dgbw}
			\begin{tikzcd} [column sep = large]
				\calA \arrow[d, "I"'] & \Comma{F}{G} \arrow[l, "\Forget_1"'] \arrow[r, "\Forget_2"] \arrow[d, "\Psi"] & \calC \arrow[d, "K"] \\
				\calA' & \Comma{F'}{G'} \arrow[l, "\Forget_1"] \arrow[r, "\Forget_2"'] & \calC'  
			\end{tikzcd}
		\end{equation}
	\end{lemma}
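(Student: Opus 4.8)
The plan is to construct $\Psi$ explicitly on objects and morphisms using the three vertical functors $I, J, K$, and to check that the two commuting squares of \eqref{eqn:gpd30} are precisely what keeps every construction inside the target comma category. First I would treat objects. An object of $\Comma{F}{G}$ is a triple $(a, c, \phi)$ with $a \in ob(\calA)$, $c \in ob(\calC)$ and $\phi : Fa \to Gc$ a morphism of $\calB$. Applying $J$ gives $J\phi : JFa \to JGc$, and the commutativity of the left and right squares, namely $JF = F'I$ and $JG = G'K$, retypes this as $J\phi : F'(Ia) \to G'(Kc)$. Hence $(Ia, Kc, J\phi)$ is a legitimate object of $\Comma{F'}{G'}$, and I set $\Psi(a,c,\phi) := (Ia, Kc, J\phi)$.

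Next I would handle morphisms. A morphism $(f,g) : (a,c,\phi) \to (a',c',\phi')$ of $\Comma{F}{G}$ is a pair $f : a \to a'$ in $\calA$ and $g : c \to c'$ in $\calC$ satisfying the compatibility square $Gg \circ \phi = \phi' \circ Ff$ in $\calB$. Applying the functor $J$ and again invoking $JF = F'I$ and $JG = G'K$ turns this identity into $G'(Kg) \circ J\phi = J\phi' \circ F'(If)$, which is exactly the compatibility condition witnessing that $(If, Kg)$ is a morphism $(Ia, Kc, J\phi) \to (Ia', Kc', J\phi')$ in $\Comma{F'}{G'}$. I therefore set $\Psi(f,g) := (If, Kg)$, which defines the induced functor \eqref{eqn:odl9}.

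Functoriality of $\Psi$ then reduces to functoriality of $I$ and $K$ componentwise: preservation of identities follows from $I(\Id_a) = \Id_{Ia}$ and $K(\Id_c) = \Id_{Kc}$, and preservation of composition follows from $I(f_2 \circ f_1) = If_2 \circ If_1$ and $K(g_2 \circ g_1) = Kg_2 \circ Kg_1$. Finally, the commutation \eqref{eqn:dgbw} with the forgetful functors is immediate from the definition of $\Psi$: reading off the first and second coordinates gives $I \circ \Forget_1 = \Forget_1 \circ \Psi$ and $K \circ \Forget_2 = \Forget_2 \circ \Psi$ on both objects and morphisms.

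I do not expect a genuine obstacle in the strictly commuting case; the only real content is the bookkeeping that retypes $J\phi$ and $J\phi'$ through the square identities. The one place where care is genuinely needed—and where I would expect the main difficulty to lie if the hypotheses were weakened—is if the squares in \eqref{eqn:gpd30} commuted only up to natural isomorphism rather than strictly. Then $J\phi$ would have domain and codomain matching $F'(Ia)$ and $G'(Kc)$ only after conjugating by the comparison isomorphisms, and one would need a coherence check to guarantee that $\Psi$ remains well defined and that \eqref{eqn:dgbw} persists (possibly only up to natural isomorphism). Since the intended application in \eqref{eqn:pd90b} supplies strict equalities, I would carry out the argument in the strict setting and merely remark on the stability of the construction under the pseudo-commuting generalization.
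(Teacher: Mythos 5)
Your proof is correct. The paper itself does not prove this lemma---it is quoted from the cited reference [Prop.\ 6, Das2024slice]---but your construction $\Psi(a,c,\phi) := (Ia, Kc, J\phi)$, $\Psi(f,g) := (If, Kg)$ is the standard argument, and your verification that the square identities $JF = F'I$ and $JG = G'K$ retype $J\phi$ and transport the compatibility condition is exactly what is needed; functoriality and the commutation \eqref{eqn:dgbw} then follow componentwise as you say. Your closing remark is also apt: the lemma's diagram does not explicitly assert commutativity of the two squares, yet the construction requires it (strictly, or up to coherent natural isomorphism with the attendant bookkeeping), and the application in \eqref{eqn:pd90b} does supply strict equalities.
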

	
	Note that the two horizontal rows of \eqref{eqn:gpd30} correspond to two comma categories. Lemma \ref{lem:oh9d} states that if comma categories are intertwined in the manner of \eqref{eqn:gpd30}, then there is a functor between these comma categories which contain the vertical functors as projections.
	
	\subsection{Categorical view of subshifts} \label{sec:app:subshift}
	
	\paragraph{Proof of Lemma \ref{lem_rep}} The result follows from the  diagram in $\Context$ shown below on the left.
	\[
	\begin{tikzcd} [scale cd = 0.8]
		S \arrow[rr, "f"] \arrow[d, two heads] & & S' \arrow[d, two heads] \\
		\sigma^i S \arrow[d, hook] & & \sigma^i S' \arrow[d, hook] \\
		S \arrow[rr, "f"] \arrow[d, two heads] & & S' \arrow[d, two heads] \\
		\word_n(S) \arrow[rr, "f_0"] & & \word_0(S') 
	\end{tikzcd} 
	\begin{tikzcd} {} \arrow[rr, mapsto, "U"] && {} \end{tikzcd}
	\begin{tikzcd} [scale cd = 0.8]
		U(S) \arrow[rr, "Uf"] \arrow[d, two heads] & & U(S') \arrow[d, two heads] \\
		\sigma^i U(S) \arrow[d, hook] & & \sigma^i U(S') \arrow[d, hook] \\
		U(S) \arrow[rr, "Uf"] \arrow[d, two heads] & & U(S') \arrow[d, two heads] \\
		\word_n(U(S)) \arrow[rr, "U(f_0)"] & & \word_0(U(S')) 
	\end{tikzcd}
	\]
	The diagram on the right is the image of the one on the left under the functor $U$. The two sides of the identity in Lemma \ref{lem_rep} correspond to the two paths from the top-left to the bottom-right corner of the diagram. This completes the proof of Lemma \ref{lem_rep}. \qed
	
	\paragraph{Proof of Lemma \ref{lem:cylinder}} The pullback square in the construction of cylinders is the categorical analog of inverses. One of the properties of pullback squares is that if any morphism in the original vee-shaped diagram is an injection, then so is its parallel morphism in the pullback square. The morphism opposite to the morphism from $\Cylinder(w) \to S$ originates from the terminal element, and is injective by default. This makes the morphism $\Cylinder(w) \to S$ injective too. This completes the proof of Lemma \ref{lem:cylinder}. \qed 
	
	\paragraph{Proof of Lemma \ref{lem_steps}} We set up the map $R_m :U\word_{m+n}(S) \to U\word_{m}(S') $ by \[R_m(\xi_0\xi_1\cdots\xi_{m+n}) := ((Uf_0)(\xi_0\xi_1\cdots \xi_n) (Uf_0)(\xi_1\xi_2\cdots \xi_{n+1})\cdots (Uf_0)(\xi_m\xi_{m+1}\cdots \xi_{m+n})).\] This is well-defined by Lemma \ref{lem_rep}. By applying Lemma \ref{lem:U_set:1}, we can verify the existence of the desired morphism $f^{(m)}$. This completes the proof of Lemma \ref{lem_steps}. \qed
	
	\paragraph{Proof of Lemma \ref{lem:fin_compose}} The proof follows directly from the diagram
	\[\begin{tikzcd} [column sep = large]
		S \arrow[r, "f"] \arrow[d, two heads] & S' \arrow[r, "f'"] \arrow[d, two heads] & S'' \arrow[d, two heads] \\
		\word_{m+n}(S) \arrow[r, "f^{(m)}"] & \word_{m}(S') \arrow[r, "f'_0"] & \word_0(S'')
	\end{tikzcd}\]
	This completes the proof of Lemma \ref{lem:fin_compose}. \qed
	
	
	\subsection{Subshifts from measurements} \label{sec:app:Seq_mes}
	
	\begin{lemma} \label{lem:lidf0l}
		Suppose Assumptions \ref{A:concrete} and \ref{A:obs} hold. Then for each morphism $(h,A): (\Omega, \Phi, \phi) \to (\Omega', \Phi', \phi')$ in $ \DSM$, we have the following commutative diagram:
		\[
		\begin{tikzcd}
			\Omega \arrow[d, "h"] \arrow[rr, "\xi_{(\Omega, \Phi, \phi)}"] & & (\iota Y)^{\num} \arrow[d, "A_*"] \\
			\Omega' \arrow[rr, "\xi_{(\Omega', \Phi', \phi')}"] & & (\iota Y')^{\num} 
		\end{tikzcd},
		\]
		where $A_*$ is defined by
		\[
		\begin{tikzcd}
			(\iota Y)^{\num} \arrow[d, "\pi_n"] \arrow[rr, "A_*", dotted] & & (\iota Y')^{\num} \arrow[d, "\pi_n'"] \\
			\iota Y \arrow[rr, "A"] & & \iota Y' 
		\end{tikzcd}
		\]
	\end{lemma}
	
	Lemma \ref{lem:lidf0l} formally proves that morphisms in $\DSM$ naturally lead to morphisms between the full shift spaces corresponding to the observation spaces. These morphisms commute with the projection maps at all times $n$. 
	
	\paragraph{Proof of Lemma \ref{lem:lidf0l}} The morphism $\xi_{(\Omega, \Phi, \phi)}$ exists uniquely by the universality of products. The claim now follows from the diagram:
	\[
	\begin{tikzcd}
		\Omega \arrow[rrr, "\xi_{(\Omega, \Phi, \phi)}"] \arrow[ddd, "h"] \arrow[rrd, "\phi\circ \Phi^n"] & & & (\iota Y)^{\num} \arrow[ddd, "A_*"] \arrow[ld, "\pi_n"] \\
		& & \iota Y \arrow[d, "A"] & \\
		& & \iota Y' & \\
		\Omega' \arrow[rrr, "\xi_{(\Omega', \Phi', \phi')}"'] \arrow[rru, "\phi'\circ \Phi'^n"] & & & (\iota Y')^{\num} \arrow[lu, "\pi_n'"] 
	\end{tikzcd},
	\]
	where commutativity of inner triangles and rectangles have already been established. The identity to be proved lies on the periphery of this diagram. This completes the proof of Lemma \ref{lem:lidf0l}. \qed
	
	\paragraph{Proof of Theorem \ref{thm:Obs_functor}} We only check for functoriality for \eqref{eqn:def:Obs:DSM} as the proof for \eqref{eqn:def:Obs:DSMO} will be analogous. Fix a $\DSM$ object $(\Omega, \Phi, \phi)$. Consider the epi-mono factorization of the morphism $\xi_{(\Omega, \Phi, \phi)}$ defined above :
	\[\begin{tikzcd}
		& \Obs(\Omega, \Phi, \phi) \arrow[dr, hook] \\
		\Omega \arrow[ur, two heads] \arrow[rr, dotted, "\xi_{(\Omega, \Phi, \phi)}"] && (\iota Y)^{\num}
	\end{tikzcd}\]
	Then the correspondence of $(\Omega, \Phi, \phi)$ with $\Obs(\Omega, \Phi, \phi)$ is functorial. Its action on any morphism $(h,A): (\Omega, \Phi, \phi) \to (\Omega', \Phi', \phi')$ is the unique morphism shown below as a dotted, green line that enables the commutation :
	\[\begin{tikzcd}
		\Omega \arrow[rr, "h"] \arrow[d, two heads] & & \Omega' \arrow[d, two heads] \\
		\Obs(\Omega, \Phi, \phi) \arrow[rr, "{\Obs(h,A)}", dotted, Shobuj] \arrow[d, hook] & & \Obs(\Omega', \Phi', \phi') \arrow[d, hook] \\
		(\iota Y)^{\num} \arrow[rr, "A_*"] & & (\iota Y')^{\num} 
	\end{tikzcd}\]
	The test for compositionality is routine and will be omitted. This completes the proof of Theorem \ref{thm:Obs_functor}. \qed
	
	\subsection{Time series data} \label{sec:app:TSD}
	
	\paragraph{Compositionality in $\TSD$} Suppose there is another morphism $(l,\psi) : Y \to Z$. Then $(l,\psi)$ composes with $(k,\phi)$ as shown below, yielding a morphism $(k+l, \psi \circ \phi): X \to Z$:
	\[\begin{tikzcd}
		\Shobuj{X_{n+1+k+l}} \arrow[bend left=30]{rrrr}{ \paran{\psi \circ \phi}_{n+1} } \arrow{rr}{ \phi_{n+1+l} } \arrow[Shobuj]{d}{ \finish_{n+k+1} }[swap]{ \start_{n+1+k} } && \akashi{ Y_{n+1+l} } \arrow{rr}{ \psi_{n+1} } \arrow[Akashi]{d}{ \finish_{k+1} }[swap]{ \start_{n+k+1} } && \Holud{ Z_{n+1} } \arrow[Holud]{d}{ \finish_{n+1} }[swap]{ \start_{n+1} } \\ 
		\Shobuj{X_{n+k+l}}  \arrow[bend right=30]{rrrr}[swap]{ \paran{\psi \circ \phi}_{n} } \arrow{rr}[swap]{ \phi_{n+l} } && \akashi{ Y_{n+l} } \arrow{rr}[swap]{ \psi_{n} } && \Holud{ Z_{n} }
	\end{tikzcd} , \quad
	\forall n\in\num .\]
	We can check that the composition is associative by using the property $(\psi\circ \phi)_n = \psi_n \circ \phi_{n+l}$. \qed
	
	\begin{lemma}\label{lem_cylinder}
		Let $\Context$ be as in Assumption \ref{A:concrete} and $\mathbb{A}$ be an object in $\Context$. For each $n$, we have
		\[ \finish_{n} \circ \pi_{\num\to n} = \pi_{\num \to n-1}, \quad \pi_{\num \to n-1} \circ \sigma = \start_n \circ \pi_{\num\to n}.\]
	\end{lemma}
	\begin{proof}
		The result follows from universality of the projection $\pi_{\num \to n-1}$ along with the diagrams which hold for every $1\leq i \leq n-1$ : 
		\[\begin{tikzcd} [column sep = large]
			\mathbb{A}^{\num} \arrow[r, "\pi_{\num\to n}"] \arrow[rrd, "\pi^{(\num)}_i"'] & \mathbb{A}^{n} \arrow[r, "\finish_{n}"] \arrow[rd, "\pi^{(\num)}_{i}"] & \mathbb{A}^{n-1} \arrow[d, "\pi^{(n-1)}_i"] \\
			& & \mathbb{A}
		\end{tikzcd}  ;\; 
		\begin{tikzcd} [column sep = large]
			\mathbb{A}^{\num} \arrow[r, "\pi_{\num\to n}"] \arrow[d, "\sigma"] \arrow[rrd, "\pi^{(\num)}_{i+1}"'] & \mathbb{A}^n \arrow[r, "\start_n"] \arrow[rd, "\pi^{(n)}_{i+1}"] & \mathbb{A}^{n-1} \arrow[d, "\pi^{(n-1)}_i"] \\
			\mathbb{A}^{\num} \arrow[rr, "\pi^{(\num)}_i"'] & & \mathbb{A}
		\end{tikzcd} . \]
	\end{proof}
	
	The following technical lemma is needed for the proof of Lemma \ref{lem:factor}.
	
	\begin{lemma} \label{lem:nf0ep}
		Suppose Assumption \ref{A:concrete} holds, $I$ is an indexing set, $c$ is an object, $\SetDef{a_i}{i\in I}$ is a family of objects in $\Context$ and $f_i : c \to a_i$ is a family of morphisms. Next suppose that $a$ is a subobject of the product $ \prod_{i\in I} a_i$. Then if there is a morphism $\phi$ which creates the commutation shown below
		\[\begin{tikzcd}
			c \arrow[d, "f_i"] \arrow[dotted, "\phi", r] & a \arrow[d, hook] \\
			a_i & \prod_{i\in I} a_i \arrow[l, "\pi_i"']
		\end{tikzcd} ,\]
		then such a $\phi$ is unique.
	\end{lemma}
	
	Lemma \ref{lem:nf0ep} directly follows from the universality of the product.
	
	\paragraph{Proof of Lemma \ref{lem:factor}} Let $\mathbb{A}$ and $\mathbb{B}$ respectively denote the alphabet of the $\TSD$ objects $X$ and $Y$. The following diagram places \eqref{eqn:TSD_commut} in a larger context
	\[\begin{tikzcd}
		\mathbb{A}^{n+k+1} \arrow[d, "\start_{n+k+1}", "\finish_{n+k+1}"'] && X_{n+k+1} \arrow[ll, hook] \arrow[d, "\start_{n+k+1}", "\finish_{n+k+1}"'] \arrow[rr, "\psi_{n+1}"] && Y_{n+1} \arrow[rr, hook] \arrow[d, "\start_{n+1}", "\finish_{n+1}"'] && \mathbb{B}^{n+1} \arrow[d, "\start_{n+1}", "\finish_{n+1}"'] \\
		\mathbb{A}^{n+k} && X_{n+k} \arrow[ll, hook] \arrow[ll, hook] \arrow[rr, "\psi_n"'] && Y_{n} \arrow[rr, hook] && \mathbb{B}^{n}
	\end{tikzcd}\]
	The proof follows directly from Lemma \ref{lem:nf0ep} by taking $I = \braces{1,\ldots,n+1}$, $a_i = \mathbb{B}$ and $a = Y_{n+1}$, $c=X_{n+k+1}$ and $\phi = \psi_{n+1}$. \qed
	
	\paragraph{Proof of Theorem \ref{thm:TSD_from_Seq}} We check the functoriality of the construction in \eqref{eqn:did93k}. Let $\phi:S \to S'$ be an $n$-morphism. We set up $\phi_m: \word_{m+n}(S) \to \word_{m}(S) $ using Lemma \ref{lem_steps}. By a direct calculation in $\mathrm{Set}$, we can check that the following diagrams commute:
	\[
	\begin{tikzcd}
		S \arrow[d, two heads] \arrow[r, "\phi"] & S' \arrow[d, two heads] \\
		\word_{m+n}(S) \arrow[r, "\phi_m"] \arrow[d, "{f_{m+n},\start_{m+n}}"] & \word_{m}(S) \arrow[d, "{f_m,\start_m}"] \\
		\word_{m+n-1}(S) \arrow[r, "\phi_{m-1}"] & \word_{m-1}(S) 
	\end{tikzcd}
	\]
	Thus $\{\phi\}_m$ indeed defines a morphism in TSD category. Compositionality follows from the uniqueness of the generator. This completes the proof of Theorem \ref{thm:TSD_from_Seq}. \qed
	
	\subsection{Delay coordinates} \label{sec:app:delay}
	
	\begin{definition} [Delay coordinates]
		Suppose Assumptions \ref{A:concrete} and \ref{A:obs} hold, $(\Omega, \Phi, \phi)$ is a $\DSM$ object, and $k\in\num$ is any integer. Let $\iota Y$ be the codomain of $\phi$. Then there is a unique morphism $\phi^{(k)}: \Omega \to \iota Y^{k+1}$ which is defined uniquely by the following commutations :
		\[
		\begin{tikzcd}
			\Omega \arrow[rr, "\phi^{(k)}", dashed] \arrow[rrd, "\phi \circ \Phi^i"'] & & \iota (Y^k) = (\iota Y)^k \arrow[d, "\pi^{(k)}_i"] \\
			& & \iota Y 
		\end{tikzcd}
		, \quad \forall 1\leq i \leq k.
		\]
	\end{definition}
	
	\begin{lemma} [Delay coordinate functor ] \label{lem:delay_coord:fnctr}
		Suppose Assumptions \ref{A:concrete} and \ref{A:obs} hold, and $k\in\num$ is any integer. Then there is a functor
		\begin{equation}
			\text{Delay}^{(k)} : \DSM \to \DSM, \quad (\Omega, \Phi, \phi) \mapsto (\Omega, \Phi, \phi^{(k)}).
		\end{equation}
		whose action on a morphism $(h,A) : (\Omega, \Phi, \phi) \to (\Omega', \Phi', \phi')$ is $(h, A^k)$. Moreover, there is a natural transformation $\alpha^{(k)} : \text{Delay}^{(k)} \to \Id$.
	\end{lemma}
	
	The proof of Lemma \ref{lem:delay_coord:fnctr} requires the following elementary observation :
	
	\begin{lemma}
		Let $\Context$ be as in Assumption \ref{A:concrete} and $A$ be an object. For each $n$ and $k$, there is a morphism $r_{A,n,k}: (A^{k+1})^{n+1} \to A^{n+k+1}$ such that
		\[
		\begin{tikzcd}
			(A^{k+1})^{n+1} \arrow[rr, "r_{A,n,k}", dashed] \arrow[d, "\pi_0"] & & A^{n+k+1} \arrow[d, "\pi_i"] \\
			A^{k+1} \arrow[rr, "\pi_i"] & & A 
		\end{tikzcd}
		\]
		for $i=0,1,\cdots, k$ and
		\[
		\begin{tikzcd}
			(A^{k+1})^{n+1} \arrow[rr, "r_{A,n,k}", dashed] \arrow[d, "\pi_{i-k}"] & & A^{n+k+1} \arrow[d, "\pi_i"] \\
			A^{k+1} \arrow[rr, "\pi_k"] & & A 
		\end{tikzcd}
		\]
		for $i=k+1,\cdots, k+n$. 
	\end{lemma}
	
	\paragraph{Proof of Lemma \ref{lem:delay_coord:fnctr}} The action on morphisms can be verified from this diagram
	\[\begin{tikzcd}
		& & & \iota Y \arrow[ddd, "A"] \\
		\Omega \arrow[r, "\Phi^i"] \arrow[d, "h"'] \arrow[rr, "\phi^{(k)}", dotted, bend left=20, Shobuj] & \Omega \arrow[d, "h"'] \arrow[rru, "\phi", bend left] & (\iota Y)^k \arrow[ru, "\pi^A_i"] \arrow[d, "A^k"] & \\
		\Omega' \arrow[r, "\Phi'^i"'] \arrow[rr, "\phi'^{(k)}"', dotted, bend right=20, Shobuj] & \Omega' \arrow[rrd, "\phi'"', bend right] & (\iota Y')^k \arrow[rd, "\pi^{A'}_i"'] & \\
		& & & \iota Y' 
	\end{tikzcd}\]
	The compositionality of this action is trivial. We construct a family of morphisms
	\[
	\alpha^{(k)}_n: \Data(\Omega, \Phi, \phi^{(k)})_n \to \Data(\Omega, \Phi, \phi)_{n+k}
	\]
	that commutes with $\finish$ and $\start$. Observe that 
	\[
	r_{A,n,k} \circ \pi_{\num\to n+1} \circ \xi_{(\Omega, \Phi, \phi^{(k)})} = \pi_{\num\to n+k+1} \circ \xi_{(\Omega, \Phi, \phi)}
	\]
	by a direct calculation. This implies that we have the following diagram:
	\[
	\begin{tikzcd}
		\Omega \arrow[dd, "\pi_{\num\to n+1}\circ\xi_{(\Omega, \Phi, \phi^{(k)})}"'] \arrow[rr, "1"] & & \Omega \arrow[dd, "\pi_{\num\to n+k+1}\circ\xi_{(\Omega, \Phi, \phi)}"] \\
		& & \\
		(\iota Y^{k+1})^{n+1} \arrow[rr, "r_n"] & & \iota Y^{n+k+1} 
	\end{tikzcd}
	\]
	From Lemma \ref{lem:U_set:2} and the definition of $\Data$, we obtain a morphism $\alpha^{(k)}_n :\Data(\Omega, \Phi, \phi^{(k)})_n \to \Data(\Omega, \Phi, \phi)_{n+k}$. By considering the underlying set structure, we can check that it commutes with $f$ and $\start$. This completes the proof of Lemma \ref{lem:delay_coord:fnctr}. \qed
	
	The main utility of Lemma \ref{lem:delay_coord:fnctr} is in showing that delay-coordinates may be used to convert jump-$k$ morphisms into jump-$0$ morphisms.
	
	\begin{lemma} [Delay coordinated data] \label{lem:delay_coord:data}
		Suppose Assumptions \ref{A:concrete} and \ref{A:obs} hold, and $\psi:\Data(\Omega, \Phi, \phi) \to X$ be a TSD morphism of jump $k$. Then, there exists a TSD morphism $\psi^{(k)}: \Data(\Omega, \Phi, \phi^{(k)}) \to X$ of jump $0$.
	\end{lemma}
	
	\begin{proof} The composition $\psi_{n+k} \circ \alpha^{(k)}_n: \Data(\Omega, \Phi, \phi^{(k)})_n \to X_n$  gives the desired TSD morphism of jump 0. \end{proof}
	
	\subsection{Reconstruction} \label{sec:app:recon}
	
	\begin{lemma}\label{lem_semiconj}
		Let $\psi: \Data(\Omega',\Phi', \phi')\to \Data\circ \iota'''_{obs} (\Omega, \Phi) $ be a TSD morphism of jump $0$. Then, there is a morphism $h: (\Omega', \Phi') \to (\Omega, \Phi)$ in $\mathrm{DS}$.
	\end{lemma}
	\begin{proof}
		From the definition of $\Data$, $\psi$ is determined uniquely by the following morphism $\psi_0$ :
		\[
		\begin{tikzcd}
			\Omega' \arrow[d, two heads] & & \iota \Omega \arrow[d, two heads] \\
			{\Data(\Omega',\Phi', \phi')_0} \arrow[d, hook] \arrow[rr, "\psi_0"] & & {\Data(\Omega,\Phi, 1_\Omega)_0} \arrow[d, hook] \\
			\iota Y' & & \iota \Omega 
		\end{tikzcd}
		\]
		Note that a morphism $h:\Omega'\to \iota \Omega$ in $\Context$ emerges from this diagram. Now we check that $h$ is a semiconjugacy. By Property \ref{Py:U_faithful} it is sufficient to show that $U(\Phi\circ h) = U(h\circ \Phi')$. This allows the analysis to be conducted in $\SetCat$. We begin by noting that :
		\[
		Uh(\omega') = U(\psi_0\circ \phi')(\omega'), \quad \forall \omega' \in U (\Omega') ,
		\]
		by construction. Now consider the set of length-$2$ sequences $U \paran{ \Data(\Omega',\Phi', \phi')_1 }$ Pick a typical element $\paran{ U \phi'(\omega'), U (\phi'\circ \Phi')(\omega') }$ from here. Then there must be an $\omega\in \Omega$ such that
		\[
		\paran{\omega, U \Phi(\omega)} = \paran{U\psi_1} \paran{ U \phi'(\omega'), U (\phi'\circ \Phi')(\omega') } \quad  \in U\circ\Data\circ \iota'''_{obs} (\Omega, \Phi).
		\]
		By applying $\start_1$ to both sides of the equality, we obtain 
		\[
		\omega = U\psi_0 \circ U \phi'(\omega').
		\]
		and if we apply $f_1$, we obtain
		\[
		U \Phi(\omega) = U\psi_0 \circ U (\phi'\circ \Phi')(\omega').
		\]
		Together they imply
		\[
		U(\Phi\circ h)(\omega') = U(h\circ \Phi')(\omega'). 
		\]
		As $\omega'$ is arbitrary, this implies $U(\Phi\circ h) = U(h\circ \Phi')$ as claimed.
	\end{proof}
	
	\paragraph{Proof of Theorem \ref{thm_consistent:3}} We begin by drawing a large commutation diagram :
	\[\begin{tikzcd}
		\DSM \arrow[rd, "\kappa"'] \arrow[dd, "Data'"', dashed, Shobuj] \arrow[rrrd, "\Forget'", Shobuj] & & & & & \DS_{obs} \arrow[lllll, "\iota'''_{obs}"'] \arrow[dd, "\subset"', Itranga, "\iota''_{obs}"] \arrow[lllld, "\iota_{obs}"'] \\
		& \DSMO \arrow[ld, "\Data"', Akashi] \arrow[rr, pos=0.7, "\Forget"', Akashi] & & \DS & & \\
		\TSD \arrow[rrr, "\mathrm{Lan}_{\Data} \Forget", Akashi] & & & \DS & & \itranga{\DS} \arrow[ll, "\cong", Itranga] \arrow[llu, "\cong"', Itranga] 
	\end{tikzcd}\]
	The commutations upheld by the red arrows are to be verified. The $\Data$ and $\Forget$ functors from $\DSMO$ have been pulled back along $\kappa$ to create analogous functors $\Data'$ and $\Forget'$ functors respectively from $\DSM$. Note that the functor $\kappa$ has a right inverse. Then by Lemma \ref{lem:Kan_id:2}~(iii) we have
	\[ \REnv{\Data}{\Forget} = \REnv{\Data \kappa}{\Forget \kappa} = \REnv{\Data'}{\Forget'} . \]
	This indicates that in analyzing the Kan construction, we are free to base the construction around $\DSMO$ or $\DSM$. We choose the latter for convenience. Thus the goal now is to prove
	\[ \paran{ \REnv{\Data'}{\Forget'} } \circ \Data' \circ \iota'''_{obs} = \iota''_{obs} . \]
	Fix an $(\Omega, \Phi)$ in $\DS_{obs}$. By the colimit formula of left Kan extension, we have
	\begin{equation}\label{def_lan}
		\REnv{\Data'}{\Forget'} \paran{\Data' \circ \iota'''_{obs} (\Omega, \Phi)} = \mathrm{colim}\,\{(\Omega', \Phi') : \Data(\Omega',\Phi', \phi')\to \Data\circ \iota'''_{obs} (\Omega, \Phi)\}.
	\end{equation}
	Thus the colimit is over a certain collection $(\Omega', \Phi')$ of dynamical systems which are a part of some $\DSM$ object which yield a data object $X'$ on the left slice of $X$. Our goal is to show that the colimit is the original dynamical system $\iota'_{obs} \paran{\Omega, \Phi}$. For simplicity of notation we denote this object as $\paran{\Omega, \Phi}$.
	
	First we check that there is a cocone with vertex $(\Omega, \Phi)$. We recall that the colimit in equation (\ref{def_lan}) is taken over the slice category $(\Data \downarrow \Data\circ \iota'''_{obs} (\Omega, \Phi))$. An object in this category is a pair of a $ \DSM$ object $(\Omega',\Phi', \phi')$ and a $\TSD$ morphism $\psi: \Data(\Omega',\Phi', \phi')\to \Data\circ \iota'''_{obs} (\Omega, \Phi)$. A morphism $((\Omega',\Phi', \phi'), \psi) \to ((\Omega'',\Phi'', \phi''), \psi')$ is a $ \DSM$ morphism $(h, A): (\Omega',\Phi', \phi') \to (\Omega'',\Phi'', \phi'')$ such that
	\[
	\begin{tikzcd}
		{\Data(\Omega',\Phi', \phi')} \arrow[rr, "{\Data(h,A)}"] \arrow[rd, "\psi"] & & {\Data(\Omega'',\Phi'', \phi'')} \arrow[ld, "\psi'"] \\
		& {\Data\circ \iota'''_{obs} (\Omega, \Phi)} & 
	\end{tikzcd}.
	\]
	Lemmas \ref{lem:delay_coord:data} and \ref{lem_semiconj} guarantee that we have the following semi-conjugacies of dynamics
	\[ \gamma := \gamma_{(\Omega',\Phi', \phi'),\psi}: (\Omega',\Phi') \to \iota'''_{obs} (\Omega, \Phi), \quad \gamma' := \gamma_{(\Omega'',\Phi'', \phi''),\psi'}: (\Omega'',\Phi'') \to \iota'''_{obs} (\Omega, \Phi) \]
	Now suppose that $\psi$ has jump $k$. The semi-conjugacies above are transported by $U$ in semi-conjugacies in $\SetCat$-valued dynamics. Combining with the $\DSM$-morphism $(h, A)$ we get :
	\begin{equation*}
		\begin{split}
			U \gamma_{(\Omega',\Phi', \phi'),\psi} (\omega') &= U(\psi_0^{(k)} \circ \phi'^{(k)}) (\omega') \\
			&= \paran{ U{\psi'}_0^{(k)} } \circ \paran{ U A_* } \circ \paran{ U\phi'^{(k)} } (\omega')\\
			&= \paran{ U{\psi'}_0^{(k)} } \circ \paran{ U {\phi''}^{(k)} } \circ \paran{ Uh } (\omega') \\
			&= \paran{ U \gamma_{(\Omega'',\Phi'', \phi''),\psi} } \circ \paran{ Uh } (\omega'),
		\end{split}
	\end{equation*}
	for every $\omega' \in \Omega'$. Therefore we have $\gamma_{(\Omega',\Phi', \phi'),\psi} = \gamma_{(\Omega'',\Phi'', \phi''),\psi} \circ h$ in $\mathrm{DS}$. Thus, $\gamma_{(\Omega',\Phi', \phi'),\psi} $ is a cocone with vertex $(\Omega, \Phi)$.
	
	Next we check that any cocone $\alpha$ with vertex $W$ is factorized by $\gamma$ when the object $((\Omega',\Phi', \phi'),\psi)$ has jump 0:
	\[
	\alpha_{(\Omega',\Phi', \phi'),\psi} = v \circ \gamma_{(\Omega',\Phi', \phi'),\psi},
	\]
	where the morphism $v: W\to (\Omega, \Phi)$ does not depend on the choice of $((\Omega',\Phi', \phi'),\psi)$ and determined uniquely.
	Since we have $\TSD$ morphism $1:\Data\circ \iota'''_{obs} (\Omega, \Phi)\to \Data\circ \iota'''_{obs} (\Omega, \Phi)$, there is $\alpha_{(\Omega,\Phi, 1_\Omega),1}: (\Omega,\Phi) \to W$. We show that \[\alpha_{(\Omega',\Phi', \phi'),\psi}= \alpha_{(\Omega,\Phi, 1_\Omega),1}\circ \gamma_{(\Omega',\Phi', \phi'),\psi}\]
	for all $((\Omega',\Phi', \phi'),\psi)$ with jump 0.
	Here we note that, if $\psi: \Data(\Omega',\Phi', \phi')\to \Data\circ \iota'''_{obs} (\Omega, \Phi)$ is a morphism of jump $0$, the morphism \[ \tilde\psi_0: \Data(\Omega',\Phi', {\phi'} )_0 = \mathrm{im}\, {\phi'} \xrightarrow{\psi_0} \Data\circ \iota'''_{obs} (\Omega, \Phi)_0\hookrightarrow\Omega\] satisfies
	\begin{equation}\label{eqn_rep}
		\begin{tikzcd}
			{\Data(\Omega',\Phi', {\phi'})} \arrow[rr, "{\Data(\gamma_{(\Omega',\Phi', \phi'),\psi},\tilde\psi_0)}"] \arrow[rd, "\psi"] & & {\Data\circ \iota'''_{obs} (\Omega, \Phi)} \arrow[ld, "1"] \\
			& {\Data\circ \iota'''_{obs} (\Omega, \Phi)} & 
		\end{tikzcd}
	\end{equation}
	This is shown as follows. First we check that $(\gamma_{(\Omega',\Phi', \phi'),\psi},\tilde\psi_0)$ is a $\DSM$ morphism from $(\Omega',\Phi', {\phi'})$ to $(\Omega,\Phi, 1_\Omega)$. This amounts to verifying that $\gamma_{(\Omega',\Phi', \phi')} = \tilde\psi_0 \circ {\phi'},$ but it follows readily from the definition of $\gamma_{(\Omega',\Phi', \phi')}$.
	Next we show that $U\psi_i = U\Data(\gamma_{(\Omega',\Phi', \phi'),\psi},\tilde\psi_0)_i$ for all $i$ by induction. The case $i=0$ follows from the definition. Now we assume the claim holds for $i-1$. Let $(\xi_0, \xi_1, \cdots, \xi_i)$ be an element in $ U \Data(\Omega',\Phi',{\phi'})_i$ and set
	\[
	U \psi_i (\xi_0, \xi_1, \cdots, \xi_i) = (\xi'_0, \xi'_1, \cdots, \xi'_i).
	\]
	Application of $U f_i$ shows that 
	\[
	U f_i \circ U \psi_i (\xi_0, \xi_1, \cdots, \xi_i) = U \psi_{i-1} ( \xi_1, \cdots, \xi_i) = ( U \tilde\psi_0(\xi_1), \cdots, U\tilde\psi_0(\xi_i)) 
	\]
	Similarly, application of $U \start_i$ shows that 
	\[
	U \start_i \circ U \psi_i (\xi_0, \xi_1, \cdots, \xi_i) = U \psi_{i-1} ( \xi_0, \cdots, \xi_{i-1}) = ( U \tilde\psi_0(\xi_0), \cdots, U\tilde\psi_0(\xi_{i-1})) 
	\]
	Combining these result, we conclude
	\[
	(\xi'_0, \xi'_1, \cdots, \xi'_i) = U\Data(\gamma_{(\Omega',\Phi', \phi'),\psi},\tilde\psi_0)_i (\xi_0, \xi_1, \cdots, \xi_i).
	\]
	Thus, we have $\psi_i = \Data(\gamma_{(\Omega',\Phi', \phi'),\psi},\tilde\psi_0)_i$ for all $i$.
	From the diagram (\ref{eqn_rep}), we have
	\[
	\alpha_{(\Omega',\Phi', {\phi'}),\psi} = \alpha_{(\Omega,\Phi, 1_\Omega),1}\circ \gamma_{(\Omega',\Phi', \phi'),\psi},
	\]
	which is the desired factorization.
	
	Now, consider the case where $\alpha$ is the universal cocone defining the colimit (\ref{def_lan}). In what follows, we will write $L := \mathrm{Lan}_{\Data} \mathrm{Frgt}$.
	
	Let the vertex of $\alpha$ be $L\circ \iota_{obs}(\Omega, \Phi)$.The cocone $\gamma$ can be factorized as
	\[
	\gamma_{(\Omega',\Phi', \phi'),\psi} = u \circ \alpha_{(\Omega',\Phi', \phi'),\psi},
	\]
	where $u: L\circ \iota_{obs}(\Omega, \Phi) \to (\Omega, \Phi)$. As $((\Omega,\Phi, 1_\Omega),1)$ has jump 0, we obtain
	\[
	1_\Omega = \gamma_{(\Omega,\Phi, 1_\Omega),1} = u \circ v \circ\gamma_{(\Omega,\Phi, 1_\Omega),1} = u \circ v.
	\]
	Conversely, we obtain
	\[
	\alpha_{(\Omega,\Phi, 1_\Omega),1} = v\circ u \circ \alpha_{(\Omega,\Phi, 1_\Omega),1}.
	\]
	From the universal property, we have $v\circ u = 1_{L\circ \iota_{obs}(\Omega, \Phi)}$. Thus, $u:L\circ \iota_{obs}(\Omega, \Phi) \to (\Omega, \Phi)$ is an isomorphism. As $u$ depends only on $(\Omega, \Phi)$, we will write $u_{(\Omega, \Phi)}$.
	
	Finally, we verify $u_{(\Omega, \Phi)}$ gives a natural transformation $L\circ I \Rightarrow \mathrm{Frgt} \circ I$. Let $h: (\Omega, \Phi) \to (\tilde\Omega, \tilde\Phi)$ be a morphism in $\mathrm{DS}_{obs}$. By definition of $L$, we have
	\[
	\tilde\alpha_{(\Omega',\Phi', \phi'),(\Data\circ \iota'''_{obs} h)\psi} = ((L\circ I) h) \circ \alpha_{(\Omega',\Phi', \phi'),\psi},
	\]
	where $\alpha$ and $\tilde \alpha$ are cocones that define colimits $(L\circ I)(\Omega, \Phi)$ and $(L\circ I)(\tilde\Omega,\tilde\Phi)$, respectively.
	
	If we construct a cocone $\tilde{\gamma}$ analogously to $\gamma$, we have
	\[
	\tilde{\gamma}_{(\Omega',\Phi', \phi'), (\Data\circ \iota'''_{obs} h)\psi} = h \circ \gamma_{(\Omega',\Phi', \phi'), \psi}
	\]
	by definition. In particular, we have
	\[
	u_{(\tilde\Omega, \tilde\Phi)} \circ \tilde\alpha_{(\Omega',\Phi', \phi'), (\Data\circ \iota'''_{obs} h)\psi} = h \circ u_{(\Omega, \Phi)} \circ \alpha_{(\Omega',\Phi', \phi'),\psi}.
	\]
	By the universality of $\alpha$, we have $u_{(\tilde\Omega, \tilde\Phi)} \circ ((L\circ I) h) = h \circ u_{(\Omega, \Phi)}$, completing the proof of Theorem \ref{thm_consistent:1}. \qed
	
	\section*{Acknowledgments}
	The authors are grateful to the referees, whose inputs helped improve the presentation greatly. 
	
	\bibliographystyle{\Path unsrt_inline_url}	

\end{document}